\newtheorem{theorem}{Theorem}[section]
\newtheorem{lemma}[theorem]{Lemma}
\newtheorem{proposition}[theorem]{Proposition}
\newtheorem{assumption}[theorem]{Assumption}
\newtheorem{corollary}[theorem]{Corollary}
\newtheorem{definition}[theorem]{Definition}
\newtheorem{remark}[theorem]{Remark}
\let\originalleft\left
\let\originalright\right
\renewcommand{\left}{\mathopen{}\mathclose\bgroup\originalleft}
\renewcommand{\right}{\aftergroup\egroup\originalright}
\newcommand{\Tr}{\mathop{\mathrm{Tr}}}
\renewcommand{\d}{\/\mathrm{d}\/}
\def\w{\textbf{W}^{\varepsilon}_{{\theta}^{\varepsilon}}}
\def\e{\varepsilon}
\def\t{t\wedge\tau_N}
\def\S{\mathrm{S}}
\def\T{T\wedge\tau_N}
\def\L{\mathbb{L}}
\def\A{\mathrm{A}}
\def\C{\mathrm{C}}
\def\f{\mathbf{f}}
\def\B{\mathrm{B}}
\def\D{\mathrm{D}}
\def\y{\mathbf{y}}
\def\E{\mathbb{E}}
\def\X{\mathbb{X}}
\def\x{\mathbf{x}}
\def\z{\mathbf{z}}
\def\v{\mathbf{v}}
\def\V{\mathbb{v}}
\def\w{\mathbf{w}}
\def\W{\mathrm{W}}
\def\G{\mathrm{G}}
\def\M{\mathrm{M}}
\def\N{\mathbb{N}}
\def\V{\mathbb{V}}
\def\wi{\widetilde}
\def\u{\mathrm{U}}
\def\P{\mathrm{P}}
\def\u{\mathbf{u}}
\def\H{\mathbb{H}}
\newcommand{\R}{\mathbb{R}}
\renewcommand{\d}{\/\mathrm{d}\/}
\newcommand{\Addresses}{{
		\footnote{
			
			\noindent \textsuperscript{1}Department of Mathematics, Indian Institute of Technology Roorkee-IIT Roorkee,
			Haridwar Highway, Roorkee, Uttarakhand 247667, INDIA.\par\nopagebreak
			\noindent  \textit{e-mail:} \texttt{manilfma@iitr.ac.in, maniltmohan@gmail.com.}
			
			\noindent \textsuperscript{*}Corresponding author.

			\textit{Key words:} convective Brinkman-Forchheimer equations, degenerate noise, asymptotic log-Harnack inequality, asymptotic strong Feller property, invariant measure.
			
			Mathematics Subject Classification (2010): 60H15, 37A25, 35Q30, 35Q35.

}}}
\begin{document}
	
	
	\title[Asymptotic log-Harnack inequality  for SCBF equations]{Asymptotic log-Harnack inequality for the stochastic convective Brinkman-Forchheimer equations with degenerate noise
			\Addresses}
	\author[M. T. Mohan ]{Manil T. Mohan\textsuperscript{1*}}

	\maketitle
	
	\begin{abstract}
	 In this work, we consider the two and three dimensional stochastic convective Brinkman-Forchheimer (SCBF)  equations and examine some asymptotic behaviors of its strong solution. We establish the asymptotic log-Harnack inequality for the transition semigroup associated with the SCBF equations driven by additive as well as multiplicative degenerate noise via the asymptotic coupling method.  As applications of the asymptotic log-Harnack inequality, we derive the gradient estimate, asymptotic irreducibility, asymptotic strong Feller property, asymptotic  heat kernel estimate and ergodicity. Whenever the absorption exponent  $r\in(3,\infty)$, the asymptotic log-Harnack inequality is obtained without any restriction on the Brinkman coefficient (effective viscosity) $\mu>0$, the Darcy coefficient $\alpha>0$ and the Forchheimer coefficient $\beta>0$.
	\end{abstract}

	\section{Introduction}\label{sec1}\setcounter{equation}{0}
The  convective Brinkman-Forchheimer (CBF) equations  describe the motion of  incompressible viscous fluid  through a rigid, homogeneous, isotropic, porous medium.	Let $\mathcal{O}\subset\R^n$ ($n=2,3$) be a bounded domain with a smooth boundary $\partial\mathcal{O}$. Let  $\u(t , x) \in \R^n$ denotes the velocity field at time $t\in[0,T]$ and position $x\in\mathcal{O}$, $p(t,x)\in\R$ represents the pressure field, $\f(t,x)\in\R^n$ stands for an external forcing.  Let the constant $\mu$ represents the positive Brinkman coefficient (effective viscosity), the positive constants $\alpha$ and $\beta$ denote the Darcy (permeability of porous medium) and Forchheimer (proportional to the porosity of the material) coefficients, respectively.  The	CBF  equations  are given by (see \cite{KT2} for Brinkman-Forchheimer equations with fast growing nonlinearities)
\begin{equation}\label{1}
\left\{
	\begin{aligned}
	\frac{\partial \u}{\partial t}-\mu \Delta\u+(\u\cdot\nabla)\u+\alpha\u+\beta|\u|^{r-1}\u+\nabla p&=\mathbf{f}, \ \text{ in } \ \mathcal{O}\times(0,T), \\ \nabla\cdot\u&=0, \ \text{ in } \ \mathcal{O}\times(0,T), \\
	\u&=\mathbf{0}\ \text{ on } \ \partial\mathcal{O}\times(0,T), \\
	\u(0)&=\x \ \text{ in } \ \mathcal{O}.
	\end{aligned}
	\right.
	\end{equation}
As mentioned earlier, the CBF equations \eqref{1} describe the motion of incompressible fluid flows in a saturated porous medium.  In order to obtain the uniqueness of the pressure $p$, one can impose the condition $
\int_{\mathcal{O}}p(x,t)\d x=0, $ for $t\in (0,T)$ also. The absorption exponent $r\in[1,\infty)$ and the case $r=3$ is known as the critical exponent.  Note that for $\alpha=\beta=0$, we obtain the classical 3D Navier-Stokes equations (see \cite{GGP,OAL,JCR3,Te,Te1}, etc). 

The Navier-Stokes problem in bounded domains with compact boundary, modified by the absorption term $|\u|^{r-2}\u$, for $r>2$ is considered in \cite{SNA}. The authors proved the existence of weak solutions in the Leray-Hopf sense, for any dimension $n\geq 2$ and its uniqueness for $n=2$. But  in three dimensions, the authors were not able to establish the energy equality satisfied by the weak solutions. The existence of regular dissipative solutions and global attractors for the system \eqref{1} in three dimensions with $r> 3$ is established in \cite{KT2}. For $r>3$, as a global smooth solution exists, the energy equality is satisfied by the weak solutions. Recently, the authors in \cite{CLF} were able to construct functions that can approximate functions defined on smooth bounded domains by elements of eigenspaces of linear operators (e.g., the Laplacian or the Stokes operator) in such a way that the approximations are bounded and converge in both Sobolev and Lebesgue spaces simultaneously. As a simple application of this result, they proved that all weak solutions of the critical CBF equations  ($r=3$) in three dimensions posed on a bounded domain in $\mathbb{R}^3$ satisfy the energy equality (see \cite{KWH} for the case of periodic domains).  The author in \cite{MTM7} proved the existence and uniqueness of a global weak solution in the Leray-Hopf sense satisfying the energy equality to the system \eqref{1}	 (in three dimensions, for all values of $\beta$ and $\mu $, whenever the absorption exponent $r>3$ and  $2\beta\mu \geq 1$, for the critical case $r=3$). The monotonicity as well as the demicontinuity properties of the linear and nonlinear operators and the Minty-Browder technique were exploited in the proofs. 

Let us now discuss some results available in the literature for the stochastic counterpart for the system \eqref{1}. The authors in \cite{MRTZ} showed the existence and uniqueness of strong solutions to the stochastic 3D tamed Navier-Stokes equations on bounded domains with Dirichlet boundary conditions. They also  proved a small time large deviation principle for the solution. By using classical Faedo-Galerkin approximation and compactness method, the existence of martingale solutions for the stochastic 3D Navier-Stokes equations with nonlinear damping is obtained in \cite{LHGH1}.  Recently, the author in \cite{MTM8} established the existence of a pathwise unique strong solution  satisfying the energy equality (It\^o's formula) to the stochastic convective Brinkman-Forchheimer (SCBF) equations perturbed by multiplicative Gaussian noise, by exploiting the monotonicity property of the linear and nonlinear operators as well as a stochastic generalization of  the Minty-Browder technique.  The author has also proved the existence of a unique ergodic and strongly mixing invariant measure for the SCBF  equations \eqref{1} subject to multiplicative Gaussian noise  (non-degenerate), by making use of the exponential stability of strong solutions. For a sample literature on stochastic tamed 3D Navier-Stokes equations and related models on periodic domains as well as on whole space, interested readers are referred to see \cite{HBAM,ZBGD,WLMR,WL,MRXZ1,MRTZ1}, etc and references therein. 

 To examine the diffusion semigroup on the Riemannian manifolds, the dimension-free Harnack inequality was introduced by Wang in \cite{FYW}. As this inequality was not available in many situations,  he introduced  log-Harnack inequality in \cite{FYW1}. Later, these two inequalities were widely used by many researchers in the context of stochastic partial differential equations (SPDEs)  to obtain the gradient estimates (and thus strong Feller property), irreducibility, heat kernel estimates, uniqueness of invariant probability measures, etc for the associated transition semigroups (cf. \cite{WLi,WLFY,MTM9,FYW3,FYW2,FYW4,FYW5}, etc and references therein).  It is observed that the strong Feller property may not hold for highly degenerate noise case.  The authors in \cite{MHJC} introduced the concept of asymptotically strong Feller property to examine the ergodic properties of the 2D Navier-Stokes equations perturbed by  degenerate noise.   The author in \cite{LXu} showed by an asymptotic coupling that the 2D stochastic Navier-Stokes equation driven by highly degenerate but essentially elliptic noise satisfies the asymptotic log-Harnack inequality, which implies the asymptotically strong Feller property.   The asymptotic log-Harnack inequality for  several  kinds  of  models  on  stochastic differential systems like non-degenerate SDEs, neutral SDEs, semi-linear SPDEs, and stochastic Hamiltonian systems with infinite memory is obtained in \cite{JBFY}.   By using the asymptotic coupling method, the asymptotic log-Harnack inequality for the transition semigroup associated to the 3D Leray-$\alpha$  model with fractional dissipation driven by highly degenerate noise is established in \cite{SLWL}. The asymptotic log-Harnack inequality and some of its consequent properties for a class of stochastic 2D hydrodynamical-type systems driven by degenerate noise are established in \cite{WHSL}. In \cite{WHSL1}, the authors established asymptotic log-Harnack inequality and discussed its applications for semilinear SPDEs with degenerate multiplicative noise by the coupling method. For a sample literature on the ergodic theory for the 2D  stochastic Navier-Stokes equations subjected to degenerate noise, we refer the interested readers to \cite{OBAK,ADe,WEJC,WEJC1,NGJC}, etc.  In this work, we consider the SCBF equations perturbed by additive as well as multiplicative degenerate noise and establish the asymptotic log-Harnack inequality  via the asymptotic coupling method. Then applying the results available in \cite{JBFY}, as applications of the asymptotic log-Harnack inequality, we also derive the gradient estimate, asymptotic irreducibility, asymptotic strong Feller property, asymptotic  heat kernel estimate and ergodicity results.  For $n=2,3$ and $r\in(3,\infty)$, we establish the asymptotic log-Harnack inequality  and ergodic properties of the transition semigroup associated with the SCBF equations without any restriction on the Brinkman coefficient (effective viscosity) $\mu$, the Darcy coefficient $\alpha$ and the Forchheimer coefficient $\beta$.

The rest of the paper is organized as follows. In the next section, we discuss about the global solvability of the stochastic counterpart of the system \eqref{1}.  We first provide the necessary function spaces needed to obtain the global solvability results of the stochastic system.   We also examine some porperties of the linear and nonlinear operators like monotone, demicontinuity, hemicontinuity, etc. The SCBF equations perturbed by degenerate Gaussian noise is also formulated in the same section. After providing an abstract formulation of the SCBF equations, we discuss about the existence and uniqueness of global strong solution. The asymptotic log-Harnack inequality for the SCBF equations perturbed additive, highly degenerate but essentially elliptic  noise is established in section \ref{sec4}. For $n=2$ and $r\in[1,3]$, we establish the asymptotic log-Harnack inequality for $\lambda_1\mu^3\geq 8\Tr(\sigma\sigma^*)$, where $\lambda_1$ is the first eigenvalue of the Stokes operator and $\sigma$ is the noise coefficient  (Theorem \ref{maint}). Regarding the applications of asymptotic log-Harnack inequality, we derive the gradient estimate, asymptotic irreducibility, asymptotic strong Feller property, asymptotic  heat kernel estimate and also the existence of a unique invariant measure (Corollary \ref{cor4.4}). For the case $n=2,3$ and $r\in(3,\infty)$, we obtain the asymptotic log-Harnack inequality for the transition semigroup associated with the SCBF equations for any $\mu,\beta>0$ and for the critical case ($n=r=3$), it has been established for $\beta\mu>1$ (Theorem \ref{maint1}).  In the final section, we consider the SCBF equations  subjected to multiplicative degenerate noise and establish the asymptotic log-Harnack inequality for $n=2,3$ and $r\in[3,\infty)$ ($\beta,\mu>0$ for $r\in(3,\infty)$ and $\beta\mu>1$ for $r=3$) (Theorem \ref{maint2}). 

\section{Stochastic convective  Brinkman-Forchheimer  equations}\label{sec2}\setcounter{equation}{0}
We discuss about the global solvability of the SCBF equations perturbed by additive degenerate noise in this section. We first provide the necessary function spaces needed to obtain the existence and uniqueness of strong solution.   In our analysis, the parameter $\alpha$ does not play a major role and we set $\alpha$ to be zero in \eqref{1} in the entire paper.

\subsection{Function spaces} Let $\C_0^{\infty}(\mathcal{O};\R^n)$ be the space of all infinitely differentiable functions  ($\R^n$-valued) with compact support in $\mathcal{O}\subset\R^n$.  Let us define 
\begin{align*} 
\mathcal{V}&:=\{\u\in\C_0^{\infty}(\mathcal{O},\R^n):\nabla\cdot\u=0\},\\
\mathbb{H}&:=\text{the closure of }\ \mathcal{V} \ \text{ in the Lebesgue space } \L^2(\mathcal{O})=\mathrm{L}^2(\mathcal{O};\R^n),\\
\mathbb{V}&:=\text{the closure of }\ \mathcal{V} \ \text{ in the Sobolev space } \H_0^1(\mathcal{O})=\mathrm{H}_0^1(\mathcal{O};\R^n),\\
\widetilde{\L}^{p}&:=\text{the closure of }\ \mathcal{V} \ \text{ in the Lebesgue space } \L^p(\mathcal{O})=\mathrm{L}^p(\mathcal{O};\R^n),
\end{align*}
for $p\in(2,\infty)$. Then under some smoothness assumptions on the boundary, we characterize the spaces $\H$, $\V$ and $\widetilde{\L}^p$ as 
$
\H=\{\u\in\L^2(\mathcal{O}):\nabla\cdot\u=0,\u\cdot\mathbf{n}\big|_{\partial\mathcal{O}}=0\}$,  with norm  $\|\u\|_{\H}^2:=\int_{\mathcal{O}}|\u(x)|^2\d x,
$
where $\mathbf{n}$ is the outward normal to $\partial\mathcal{O}$,
$
\V=\{\u\in\H_0^1(\mathcal{O}):\nabla\cdot\u=0\},$  with norm $ \|\u\|_{\V}^2:=\int_{\mathcal{O}}|\nabla\u(x)|^2\d x,
$ and $\widetilde{\L}^p=\{\u\in\L^p(\mathcal{O}):\nabla\cdot\u=0, \u\cdot\mathbf{n}\big|_{\partial\mathcal{O}}\},$ with norm $\|\u\|_{\widetilde{\L}^p}^p=\int_{\mathcal{O}}|\u(x)|^p\d x$, respectively.
Let $(\cdot,\cdot)$ represents the inner product in the Hilbert space $\H$ and $\langle \cdot,\cdot\rangle $ denotes the induced duality between the spaces $\V$  and its dual $\V'$ as well as $\widetilde{\L}^p$ and its dual $\widetilde{\L}^{p'}$, where $\frac{1}{p}+\frac{1}{p'}=1$. Note that $\H$ can be identified with its dual $\H'$. We endow the space $\V\cap\widetilde{\L}^{p}$ with the norm $\|\u\|_{\V}+\|\u\|_{\widetilde{\L}^{p}},$ for $\u\in\V\cap\widetilde{\L}^p$ and its dual $\V'+\widetilde{\L}^{p'}$ with the norm $$\inf\left\{\max\left(\|\v_1\|_{\V'},\|\v_1\|_{\widetilde{\L}^{p'}}\right):\v=\v_1+\v_2, \ \v_1\in\V', \ \v_2\in\widetilde{\L}^{p'}\right\}.$$ Furthermore, we have the continuous embedding $\V\cap\widetilde{\L}^p\hookrightarrow\H\hookrightarrow\V'+\widetilde{\L}^{p'}$. 

\subsection{Linear operator}
Let $\mathrm{P}_{\H} : \L^2(\mathcal{O}) \to\H$ denotes the \emph{Helmholtz-Hodge orthogonal projection}. We define
\begin{equation*}
\left\{
\begin{aligned}
\A\u:&=-\mathrm{P}_{\H}\Delta\u,\;\u\in\D(\A),\\ \D(\A):&=\V\cap\H^{2}(\mathcal{O}).
\end{aligned}
\right.
\end{equation*}
It can be easily seen that the operator $\A$ is a non-negative self-adjoint operator in $\H$ with $\V=\D(\A^{1/2})$ and \begin{align}\label{2.7a}\langle \A\u,\u\rangle =\|\u\|_{\V}^2,\ \textrm{ for all }\ \u\in\V, \ \text{ so that }\ \|\A\u\|_{\V'}\leq \|\u\|_{\V}.\end{align}
For a bounded domain $\mathcal{O}$, the operator $\A$ is invertible and its inverse $\A^{-1}$ is bounded, self-adjoint and compact in $\H$. Thus, using spectral theorem, the spectrum of $\A$ consists of an infinite sequence $0< \lambda_1\leq \lambda_2\leq\ldots\leq \lambda_k\leq \ldots,$ with $\lambda_k\to\infty$ as $k\to\infty$ of eigenvalues. 
Moreover, there exists an orthogonal basis $\{e_k\}_{k=1}^{\infty} $ of $\H$ consisting of eigenvectors of $\A$ such that $\A e_k =\lambda_ke_k$,  for all $ k\in\mathbb{N}$.  We know that any $\u\in\H$ can be expressed as $\u=\sum_{k=1}^{\infty}\langle\u,e_k\rangle e_k$ and $\A\u=\sum_{k=1}^{\infty}\lambda_k\langle\u,e_k\rangle e_k$, for $\u\in\D(\A)$. Thus, it is immediate that 
\begin{align}\label{poin}
\|\nabla\u\|_{\mathbb{H}}^2=\langle \A\u,\u\rangle =\sum_{k=1}^{\infty}\lambda_k|\langle \u,e_k\rangle|^2\geq \lambda_1\sum_{k=1}^{\infty}|\langle\u,e_k\rangle|^2=\lambda_1\|\u\|_{\mathbb{H}}^2, \ \text{ for }\ \u\in\V. 
\end{align}

\subsection{Bilinear operator}
Let us define the \emph{trilinear form} $b(\cdot,\cdot,\cdot):\V\times\V\times\V\to\R$ by $$b(\u,\v,\w)=\int_{\mathcal{O}}(\u(x)\cdot\nabla)\v(x)\cdot\w(x)\d x=\sum_{i,j=1}^n\int_{\mathcal{O}}\u_i(x)\frac{\partial \v_j(x)}{\partial x_i}\w_j(x)\d x.$$ If $\u, \v$ are such that the linear map $b(\u, \v, \cdot) $ is continuous on $\V$, the corresponding element of $\V'$ is denoted by $\B(\u, \v)$. We also denote (with an abuse of notation) $\B(\u) = \B(\u, \u)=\mathrm{P}_{\H}(\u\cdot\nabla)\u$.
An integration by parts gives 
\begin{equation}\label{b0}
\left\{
\begin{aligned}
b(\u,\v,\v) &= 0,\text{ for all }\u,\v \in\V,\\
b(\u,\v,\w) &=  -b(\u,\w,\v),\text{ for all }\u,\v,\w\in \V.
\end{aligned}
\right.\end{equation}
In the trilinear form, an application of H\"older's inequality yields
\begin{align*}
|b(\u,\v,\w)|=|b(\u,\w,\v)|\leq \|\u\|_{\widetilde{\L}^{r+1}}\|\v\|_{\widetilde{\L}^{\frac{2(r+1)}{r-1}}}\|\w\|_{\V},
\end{align*}
for all $\u\in\V\cap\widetilde{\L}^{r+1}$, $\v\in\V\cap\widetilde{\L}^{\frac{2(r+1)}{r-1}}$ and $\w\in\V$, so that we get 
\begin{align}\label{2p9}
\|\B(\u,\v)\|_{\V'}\leq \|\u\|_{\widetilde{\L}^{r+1}}\|\v\|_{\widetilde{\L}^{\frac{2(r+1)}{r-1}}}.
\end{align}
Hence, the trilinear map $b : \V\times\V\times\V\to \R$ has a unique extension to a bounded trilinear map from $(\V\cap\widetilde{\L}^{r+1})\times(\V\cap\widetilde{\L}^{\frac{2(r+1)}{r-1}})\times\V$ to $\R$. It can also be seen that $\B$ maps $ \V\cap\widetilde{\L}^{r+1}$  into $\V'+\widetilde{\L}^{\frac{r+1}{r}}$ and using interpolation inequality, we get 
\begin{align}\label{212}
\left|\langle \B(\u,\u),\v\rangle \right|=\left|b(\u,\v,\u)\right|\leq \|\u\|_{\widetilde{\L}^{r+1}}\|\u\|_{\widetilde{\L}^{\frac{2(r+1)}{r-1}}}\|\v\|_{\V}\leq\|\u\|_{\widetilde{\L}^{r+1}}^{\frac{r+1}{r-1}}\|\u\|_{\H}^{\frac{r-3}{r-1}}\|\v\|_{\V},
\end{align}
for all $\v\in\V\cap\widetilde{\L}^{r+1}$. Thus, we have 
\begin{align}\label{2.9a}
\|\B(\u)\|_{\V'+\widetilde{\L}^{\frac{r+1}{r}}}\leq\|\u\|_{\widetilde{\L}^{r+1}}^{\frac{r+1}{r-1}}\|\u\|_{\H}^{\frac{r-3}{r-1}},
\end{align}
for $r\geq 3$. 

  For $n=2$ and $r\in[1,3]$, using H\"older's and Ladyzhenskaya's inequalities, we obtain 
  \begin{align*}
  |\langle\B(\u,\v),\w\rangle|=|\langle\B(\u,\w),\v\rangle|\leq\|\u\|_{\wi\L^4}\|\v\|_{\wi\L^4}\|\w\|_{\V},
  \end{align*}
  for all $\u,\v\in\wi\L^4$ and $\w\in\V$, so that we get $\|\B(\u,\v)\|_{\V'}\leq\|\u\|_{\wi\L^4}\|\v\|_{\wi\L^4}$. Furthermore, we have $$\|\B(\u,\u)\|_{\V'}\leq\|\u\|_{\wi\L^4}^2\leq\sqrt{2}\|\u\|_{\H}\|\u\|_{\V}\leq\sqrt{\frac{2}{\lambda_1}}\|\u\|_{\V}^2,$$ for all $\u\in\V$. 
\subsection{Nonlinear operator}
Let us now consider the operator $\mathcal{C}(\u):=\P_{\H}(|\u|^{r-1}\u)$. It is immediate that $\langle\mathcal{C}(\u),\u\rangle =\|\u\|_{\widetilde{\L}^{r+1}}^{r+1}$. 
For any $r\in[1,\infty)$, we have 
\begin{align}\label{2pp11}
&\langle \mathrm{P}_{\H}(\u|\u|^{r-1})-\mathrm{P}_{\H}(\v|\v|^{r-1}),\u-\v\rangle\nonumber\\&=\int_{\mathcal{O}}\left(\u(x)|\u(x)|^{r-1}-\v(x)|\v(x)|^{r-1}\right)\cdot(\u(x)-\v(x))\d x\nonumber\\&=\int_{\mathcal{O}}\left(|\u(x)|^{r+1}-|\u(x)|^{r-1}\u(x)\cdot\v(x)-|\v(x)|^{r-1}\u(x)\cdot\v(x)+|\v(x)|^{r+1}\right)\d x\nonumber\\&\geq\int_{\mathcal{O}}\left(|\u(x)|^{r+1}-|\u(x)|^{r}|\v(x)|-|\v(x)|^{r}|\u(x)|+|\v(x)|^{r+1}\right)\d x\nonumber\\&=\int_{\mathcal{O}}\left(|\u(x)|^r-|\v(x)|^r\right)(|\u(x)|-|\v(x)|)\d x\geq 0. 
\end{align}
Furthermore, we find 
\begin{align}\label{224}
&\langle\mathrm{P}_{\H}(\u|\u|^{r-1})-\mathrm{P}_{\H}(\v|\v|^{r-1}),\u-\v\rangle\nonumber\\&=\langle|\u|^{r-1},|\u-\v|^2\rangle+\langle|\v|^{r-1},|\u-\v|^2\rangle+\langle\v|\u|^{r-1}-\u|\v|^{r-1},\u-\v\rangle\nonumber\\&=\||\u|^{\frac{r-1}{2}}(\u-\v)\|_{\H}^2+\||\v|^{\frac{r-1}{2}}(\u-\v)\|_{\H}^2\nonumber\\&\quad+\langle\u\cdot\v,|\u|^{r-1}+|\v|^{r-1}\rangle-\langle|\u|^2,|\v|^{r-1}\rangle-\langle|\v|^2,|\u|^{r-1}\rangle.
\end{align}
But, we know that 
\begin{align*}
&\langle\u\cdot\v,|\u|^{r-1}+|\v|^{r-1}\rangle-\langle|\u|^2,|\v|^{r-1}\rangle-\langle|\v|^2,|\u|^{r-1}\rangle\nonumber\\&=-\frac{1}{2}\||\u|^{\frac{r-1}{2}}(\u-\v)\|_{\H}^2-\frac{1}{2}\||\v|^{\frac{r-1}{2}}(\u-\v)\|_{\H}^2+\frac{1}{2}\langle\left(|\u|^{r-1}-|\v|^{r-1}\right),\left(|\u|^2-|\v|^2\right)\rangle \nonumber\\&\geq -\frac{1}{2}\||\u|^{\frac{r-1}{2}}(\u-\v)\|_{\H}^2-\frac{1}{2}\||\v|^{\frac{r-1}{2}}(\u-\v)\|_{\H}^2.
\end{align*}
From \eqref{224}, we finally have 
\begin{align}\label{2.23}
&\langle\mathrm{P}_{\H}(\u|\u|^{r-1})-\mathrm{P}_{\H}(\v|\v|^{r-1}),\u-\v\rangle\geq \frac{1}{2}\||\u|^{\frac{r-1}{2}}(\u-\v)\|_{\H}^2+\frac{1}{2}\||\v|^{\frac{r-1}{2}}(\u-\v)\|_{\H}^2\geq 0,
\end{align}
for $r\geq 1$.  	It is important to note that 
\begin{align}\label{a215}
\|\u-\v\|_{\wi\L^{r+1}}^{r+1}&=\int_{\mathcal{O}}|\u(x)-\v(x)|^{r-1}|\u(x)-\v(x)|^{2}\d x\nonumber\\&\leq 2^{r-2}\int_{\mathcal{O}}(|\u(x)|^{r-1}+|\v(x)|^{r-1})|\u(x)-\v(x)|^{2}\d x\nonumber\\&\leq 2^{r-2}\||\u|^{\frac{r-1}{2}}(\u-\v)\|_{\L^2}^2+2^{r-2}\||\v|^{\frac{r-1}{2}}(\u-\v)\|_{\L^2}^2. 
\end{align}
Combining \eqref{2.23} and \eqref{a215}, we obtain 
\begin{align}\label{214}
\langle\mathcal{C}(\u)-\mathcal{C}(\v),\u-\v\rangle\geq\frac{1}{2^{r-1}}\|\u-\v\|_{\wi\L^{r+1}}^{r+1},
\end{align}
for $r\geq 1$. 
\subsection{Monotonicity}
In this subsection, we discuss about the monotonicity as well as the hemicontinuity properties of the linear and nonlinear operators.
\begin{definition}[\cite{VB}]
	Let $\X$ be a Banach space and let $\X^{'}$ be its topological dual.
	An operator $\G:\mathrm{D}\rightarrow
	\X^{'},$ $\mathrm{D}=\mathrm{D}(\G)\subset \X$ is said to be
	\emph{monotone} if
	$$\langle\G(x)-\G(y),x-y\rangle\geq
	0,\ \text{ for all } \ x,y\in \mathrm{D}.$$ 
	The operator $\G(\cdot)$ is said to be \emph{hemicontinuous}, if for all $x, y\in\X$ and $w\in\X',$ $$\lim_{\lambda\to 0}\langle\G(x+\lambda y),w\rangle=\langle\G(x),w\rangle.$$
	The operator $\G(\cdot)$ is called \emph{demicontinuous}, if for all $x\in\mathrm{D}$ and $y\in\X$, the functional $x \mapsto\langle \G(x), y\rangle$  is continuous, or in other words, $x_k\to x$ in $\X$ implies $\G(x_k)\xrightarrow{w}\G(x)$ in $\X'$. Clearly demicontinuity implies hemicontinuity. 
\end{definition}
\begin{theorem}[Theorem 2.2, \cite{MTM7}]\label{thm2.2}
	Let $\u,\v\in\V\cap\widetilde{\L}^{r+1}$, for $r>3$. Then,	for the operator $\G(\u)=\mu \A\u+\B(\u)+\beta\mathcal{C}(\u)$, we  have 
	\begin{align}\label{fe}
	\langle(\G(\u)-\G(\v),\u-\v\rangle+\eta\|\u-\v\|_{\H}^2\geq 0,
	\end{align}
	where \begin{align}\label{215}\eta=\frac{r-3}{2\mu(r-1)}\left(\frac{2}{\beta\mu (r-1)}\right)^{\frac{2}{r-3}}.\end{align} That is, the operator $\G+\eta\mathrm{I}$ is a monotone operator from $\V\cap\widetilde{\L}^{r+1}$ to $\V'+\widetilde{\L}^{\frac{r+1}{r}}$. 
\end{theorem}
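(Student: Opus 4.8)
The plan is to control the difference of the three operators term by term, exploiting the good sign coming from $\mathcal{C}$ via the estimate \eqref{214} (equivalently \eqref{2.23}) to dominate the indefinite contribution of the bilinear operator $\B$. First I would write $\langle\G(\u)-\G(\v),\u-\v\rangle = \mu\|\u-\v\|_{\V}^2 + \langle\B(\u)-\B(\v),\u-\v\rangle + \beta\langle\mathcal{C}(\u)-\mathcal{C}(\v),\u-\v\rangle$. The term $\mu\|\u-\v\|_{\V}^2$ is already nonnegative, and by \eqref{214} the last term is bounded below by $\tfrac{\beta}{2^{r-1}}\|\u-\v\|_{\wi\L^{r+1}}^{r+1}$; more importantly, by \eqref{2.23} it dominates $\tfrac{\beta}{2}\||\v|^{\frac{r-1}{2}}(\u-\v)\|_{\H}^2$, i.e. a weighted $\mathrm{L}^2$ norm of $\u-\v$ with weight $|\v|^{r-1}$. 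The whole game is then to absorb the bilinear term into $\mu\|\u-\v\|_{\V}^2 + \tfrac{\beta}{2}\||\v|^{\frac{r-1}{2}}(\u-\v)\|_{\H}^2$ at the cost of a constant times $\|\u-\v\|_{\H}^2$.

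Set $\w=\u-\v$. Using the antisymmetry \eqref{b0}, the bilinear difference rearranges as $\langle\B(\u)-\B(\v),\w\rangle = \langle\B(\w,\u),\w\rangle = b(\w,\u,\w) = -b(\w,\w,\u)$, since $b(\w,\v,\w)=0$. Hence I only need to bound $|b(\w,\w,\u)| = |\langle\B(\w,\w),\v\rangle|$ — wait, more precisely $|b(\w,\w,\u)|$ where $\u=\v+\w$, so $b(\w,\w,\u)=b(\w,\w,\v)$ because $b(\w,\w,\w)=0$; thus the term equals $-b(\w,\w,\v)$. Now estimate by Hölder: $|b(\w,\w,\v)| \le \int_{\mathcal{O}} |\nabla\w|\,|\w|\,|\v|\,\d x \le \| |\v|^{\frac{r-1}{2}}\w \|_{\mathrm{L}^2}\, \|\w\|_{\V}\, \| |\v|^{1-\frac{r-1}{2}}\|_{\mathrm{L}^{\infty}}$ — this crude split only works for $r\le 3$, so instead I would interpolate the factor $|\w|\,|\v|$ using $|\w|\,|\v| = \big(|\v|^{\frac{r-1}{2}}|\w|\big)^{\theta} |\w|^{1-\theta} |\v|^{\gamma}$ with exponents chosen so that $|\v|$ appears to a power $\le \frac{r-1}{2}\cdot(\text{compensating factor})$ and the leftover $|\w|^{1-\theta}$ is placed in $\mathrm{L}^2$. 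Concretely, writing $|b(\w,\w,\v)|\le \|\w\|_{\V}\,\|\,|\w||\v|\,\|_{\mathrm{L}^2}$ and then $\|\,|\w||\v|\,\|_{\mathrm{L}^2}^2 = \int |\w|^2|\v|^2 \le \big(\int |\v|^{r-1}|\w|^2\big)^{\frac{2}{r-1}}\big(\int |\w|^2\big)^{\frac{r-3}{r-1}}$ by Hölder with exponents $\frac{r-1}{2}$ and $\frac{r-1}{r-3}$ (valid precisely for $r>3$), so that $\|\,|\w||\v|\,\|_{\mathrm{L}^2} \le \big\| |\v|^{\frac{r-1}{2}}\w\big\|_{\mathrm{L}^2}^{\frac{2}{r-1}} \|\w\|_{\H}^{\frac{r-3}{r-1}}$. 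Therefore $|b(\w,\w,\v)| \le \|\w\|_{\V}\,\big\| |\v|^{\frac{r-1}{2}}\w\big\|_{\mathrm{L}^2}^{\frac{2}{r-1}}\|\w\|_{\H}^{\frac{r-3}{r-1}}$.

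Now apply Young's inequality twice to distribute this product across the three available nonnegative terms $\mu\|\w\|_{\V}^2$, $\tfrac{\beta}{2}\big\||\v|^{\frac{r-1}{2}}\w\big\|_{\mathrm{L}^2}^2$, and a multiple of $\|\w\|_{\H}^2$. The exponents $1$, $\tfrac{2}{r-1}$, $\tfrac{r-3}{r-1}$ on the three factors sum to $2$, so there is a valid Young splitting distributing the product into $\varepsilon_1\|\w\|_{\V}^2 + \varepsilon_2\big\||\v|^{\frac{r-1}{2}}\w\big\|_{\mathrm{L}^2}^2 + C(\varepsilon_1,\varepsilon_2)\|\w\|_{\H}^2$; choosing $\varepsilon_1=\mu$ and $\varepsilon_2=\beta/2$ forces the constant $C$ to be exactly the $\eta$ in \eqref{215} (this is where the precise form of $\eta$ comes from — matching the conjugate exponents in Young's inequality, which produces the factor $\big(\tfrac{2}{\beta\mu(r-1)}\big)^{\frac{2}{r-3}}$ and the prefactor $\tfrac{r-3}{2\mu(r-1)}$). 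Putting it together gives $\langle\G(\u)-\G(\v),\w\rangle \ge \mu\|\w\|_{\V}^2 + \tfrac{\beta}{2}\||\v|^{\frac{r-1}{2}}\w\|_{\mathrm{L}^2}^2 - |b(\w,\w,\v)| - \text{(sign-good }\mathcal{C}\text{ part already used)} \ge -\eta\|\w\|_{\H}^2$, which is \eqref{fe}. Hemicontinuity of $\G$ (hence demicontinuity, since the excerpt notes demicontinuity implies hemicontinuity) follows from the continuity of $\A$, the local Lipschitz bounds \eqref{lip}-type estimates on $\B$, and the local Lipschitz estimate on $\mathcal{C}$ coming from Taylor's formula as in \eqref{213}: for fixed $\u,\v,\w$ the scalar map $\lambda\mapsto\langle\G(\u+\lambda\v),\w\rangle$ is a polynomial-type expression in $\lambda$ and converges as $\lambda\to0$.

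The main obstacle is the bilinear estimate and the bookkeeping of Young's inequality: one must split $b(\w,\w,\v)$ so that the weight $|\v|^{r-1}$ appearing in the good lower bound from $\mathcal{C}$ is exactly what controls the $|\v|$ factor in the trilinear term, and then verify that the three exponents are conjugate in the right way so that the residual coefficient is precisely $\eta$ and not merely some constant. The restriction $r>3$ is essential at exactly the Hölder step $\int|\w|^2|\v|^2 \le (\int|\v|^{r-1}|\w|^2)^{\frac{2}{r-1}}(\int|\w|^2)^{\frac{r-3}{r-1}}$, which degenerates at $r=3$ (the second exponent vanishes) and fails for $r<3$ — consistent with the separate $n=2$, $r\in[1,3]$ treatment mentioned in the introduction.
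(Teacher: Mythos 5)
Your proposal is correct and takes essentially the same route as the paper's own argument (the identical chain of estimates appears in \eqref{4.30}--\eqref{4.33}): isolate the viscous term, keep the weighted term $\tfrac{\beta}{2}\||\v|^{\frac{r-1}{2}}(\u-\v)\|_{\H}^2$ from \eqref{2.23}, reduce the convective difference to a single trilinear term, and apply H\"older with exponents $\tfrac{r-1}{2},\tfrac{r-1}{r-3}$ followed by Young's inequality. Two harmless slips: the justification should invoke $b(\v,\w,\w)=0$ (not ``$b(\w,\v,\w)=0$'', which would make the whole convective term vanish), and your choice $\varepsilon_1=\mu$, $\varepsilon_2=\beta/2$ actually produces a constant strictly smaller than $\eta$ (the paper's split, absorbing only $\tfrac{\mu}{2}\|\u-\v\|_{\V}^2$, yields exactly \eqref{215}), which only strengthens \eqref{fe}.
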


\begin{theorem}[Theorem 2.3, \cite{MTM7}]\label{thm2.3}
	For the critical case $r=3$ with $2\beta\mu \geq 1$, the operator $\G(\cdot):\V\cap\widetilde{\L}^{r+1}\to \V'+\widetilde{\L}^{\frac{r+1}{r}}$ is globally monotone, that is, for all $\u,\v\in\V$, we have 
\begin{align}\label{218}\langle\G(\u)-\G(\v),\u-\v\rangle\geq 0.\end{align}
	\end{theorem}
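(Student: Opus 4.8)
The goal is to establish global monotonicity of $\G = \mu\A + \B + \beta\C$ on $\V$ in the critical case $r=3$ under the condition $2\beta\mu\geq 1$. The plan is to reduce the difference $\langle \G(\u)-\G(\v),\u-\v\rangle$ to three pieces corresponding to the linear Stokes term, the bilinear convective term, and the Forchheimer nonlinearity, and then exploit the positivity already recorded for $\A$ and $\C$ to absorb the sign-indefinite convective contribution. Concretely, writing $\w=\u-\v$, one has $\langle\mu\A\u-\mu\A\v,\w\rangle=\mu\|\w\|_{\V}^2$ by \eqref{2.7a}, and $\langle\beta\C(\u)-\beta\C(\v),\w\rangle\geq \tfrac{\beta}{2}\||\u|(\u-\v)\|_{\H}^2+\tfrac{\beta}{2}\||\v|(\u-\v)\|_{\H}^2$ from \eqref{2.23} specialized to $r=3$. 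So everything comes down to controlling $\langle\B(\u)-\B(\v),\w\rangle$ from below by $-\mu\|\w\|_{\V}^2-\tfrac{\beta}{2}\||\v|\w\|_{\H}^2$ (or with $|\u|$; either choice works by symmetry).

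The main step is the estimate of the bilinear term. Using the standard identity $\B(\u)-\B(\v)=\B(\w,\u)+\B(\v,\w)$ together with $b(\v,\w,\w)=0$ from \eqref{b0}, one gets $\langle\B(\u)-\B(\v),\w\rangle=b(\w,\u,\w)=-b(\w,\w,\u)$, and then $|\langle\B(\u)-\B(\v),\w\rangle|=|b(\w,\w,\u)|\leq \||\u|\,\w\|_{\H}\,\|\w\|_{\V}$ by Cauchy--Schwarz (treating $|\nabla\w|$ and $|\u||\w|$ as the two $\L^2$ factors). Wait --- one must be careful: writing it with $\u$ gives the factor $\||\u|\w\|_{\H}$, but the Forchheimer term only supplies $\tfrac{\beta}{2}\||\u|\w\|_{\H}^2$ as well, so this pairs up correctly. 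Applying Young's inequality, $|b(\w,\w,\u)|\leq \mu\|\w\|_{\V}^2+\tfrac{1}{4\mu}\||\u|\w\|_{\H}^2$. Combining the three contributions yields
\begin{align*}
\langle\G(\u)-\G(\v),\u-\v\rangle\geq \Big(\frac{\beta}{2}-\frac{1}{4\mu}\Big)\||\u|(\u-\v)\|_{\H}^2+\frac{\beta}{2}\||\v|(\u-\v)\|_{\H}^2,
\end{align*}
and the coefficient $\tfrac{\beta}{2}-\tfrac{1}{4\mu}$ is nonnegative precisely when $2\beta\mu\geq 1$, which gives \eqref{218}.

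I expect the main obstacle to be bookkeeping rather than any deep difficulty: one must make sure the convective term is split so that the ``bad'' quadratic factor it produces is exactly one of the two good quadratic terms $\||\u|\w\|_{\H}^2$, $\||\v|\w\|_{\H}^2$ coming from the Forchheimer dissipation (and not, say, $\|\w\|_{\V}^2$ alone, which the viscosity must also partly pay for). This is why the critical exponent $r=3$ is special: $|b(\w,\w,\u)|$ naturally bounds by $\|\nabla\w\|_{\H}\,\||\u|\,\w\|_{\H}$, whose second factor is cubic-homogeneous-compatible and matches $\langle\C(\u)-\C(\v),\w\rangle$; for $r<3$ the Forchheimer term is too weak and one needs the smallness hypothesis of Theorem \ref{maint}, while for $r>3$ there is a surplus and one gets only local monotonicity after subtracting $\eta\|\w\|_{\H}^2$ as in Theorem \ref{thm2.2}. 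A secondary point is to confirm that all the pairings make sense, i.e. that $\w\in\V$ and $|\u|\w,|\v|\w\in\L^2(\mathcal{O})$, which holds since $\u,\v\in\V\hookrightarrow \widetilde\L^4$ in dimensions $n=2,3$ so that $|\u||\w|\in\L^2$ by Hölder, and hence every term above is finite. Hemicontinuity of $\G$ follows from the demicontinuity of $\A$, $\B$, and $\C$ as operators into $\V'+\widetilde\L^{4/3}$, which was already noted in the preceding discussion.
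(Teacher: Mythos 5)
Your proof is correct and follows essentially the same route as the cited source and as the analogous computations already present in this paper (the $r=3$ case of \eqref{2.23} for the Forchheimer term, and the convective-term estimate mirrored in \eqref{4.36}--\eqref{4.38}): split off $\mu\|\u-\v\|_{\V}^2$, bound $|b(\w,\w,\u)|\leq\||\u|\w\|_{\H}\|\w\|_{\V}$ by Cauchy--Schwarz and Young, and absorb it into the damping terms, which is exactly where the condition $2\beta\mu\geq 1$ enters. No gaps; your choice of pairing the convective term with $\||\u|\w\|_{\H}^2$ rather than $\||\v|\w\|_{\H}^2$ is immaterial, as you note.
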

\begin{theorem}[Remark 2.4, \cite{MTM7}]
	Let $n=2$, $r\in[1,3]$ and $\u,\v\in\V$. Then,	for the operator $\G(\u)=\mu \A\u+\B(\u)+\beta\mathcal{C}(\u)$, we  have 
	\begin{align}\label{fe2}
	\langle(\G(\u)-\G(\v),\u-\v\rangle+ \frac{27}{32\mu ^3}N^4\|\u-\v\|_{\H}^2\geq 0,
	\end{align}
	for all $\v\in{\mathbb{B}}_N$, where ${\mathbb{B}}_N$ is an $\widetilde{\L}^4$-ball of radius $N$, that is,
	$
	{\mathbb{B}}_N:=\big\{\z\in\widetilde{\L}^4:\|\z\|_{\widetilde{\L}^4}\leq N\big\}.
	$
	\end{theorem}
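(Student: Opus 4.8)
The plan is to prove \eqref{fe2} by the same three‑step scheme used for Theorems~\ref{thm2.2} and \ref{thm2.3}. Pair $\G(\u)-\G(\v)$ with $\w:=\u-\v$ and split
\[
\langle\G(\u)-\G(\v),\w\rangle=\mu\langle\A\w,\w\rangle+\langle\B(\u)-\B(\v),\w\rangle+\beta\langle\mathcal{C}(\u)-\mathcal{C}(\v),\w\rangle .
\]
By \eqref{2.7a} the viscous part equals $\mu\|\w\|_{\V}^2$, and by the monotonicity \eqref{2pp11} (equivalently \eqref{214}), which holds for all $r\geq1$ and in particular for $r\in[1,3]$, the Forchheimer part is nonnegative. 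Hence the whole matter reduces to a lower bound for the convective term. Unlike the case $r>3$, where the Forchheimer dissipation $\beta\|\u\|_{\widetilde{\L}^{r+1}}^{r+1}$ can absorb part of the inertial term, here the exponent $r+1\le 4$ is too small for that, so the term $\beta\langle\mathcal{C}(\u)-\mathcal{C}(\v),\w\rangle$ is kept only for its sign and the convective term is controlled purely by $\mu\|\w\|_{\V}^2$; this is exactly why the final constant must depend on the radius $N$ of the $\widetilde{\L}^4$-ball containing $\v$.

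For the convective term I would use the decomposition $\B(\u)-\B(\v)=\B(\u,\w)+\B(\w,\v)$. Since $\langle\B(\u,\w),\w\rangle=b(\u,\w,\w)=0$ by \eqref{b0}, one is left with $\langle\B(\u)-\B(\v),\w\rangle=b(\w,\v,\w)=-b(\w,\w,\v)$, and H\"older's inequality in the two–dimensional form recorded just above the statement gives $|b(\w,\v,\w)|\le\|\w\|_{\widetilde{\L}^4}\|\v\|_{\widetilde{\L}^4}\|\w\|_{\V}\le N\|\w\|_{\widetilde{\L}^4}\|\w\|_{\V}$, using $\v\in\mathbb{B}_N$. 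Next apply the interpolation inequality $\|\w\|_{\widetilde{\L}^4}^2\le\sqrt{2}\,\|\w\|_{\H}\|\w\|_{\V}$ (also recorded just above) to reach $|\langle\B(\u)-\B(\v),\w\rangle|\le 2^{1/4}N\|\w\|_{\H}^{1/2}\|\w\|_{\V}^{3/2}$, and finally Young's inequality with exponents $\tfrac43$ and $4$, the free parameter tuned so that the $\|\w\|_{\V}^2$–contribution comes out with coefficient $\mu$. This yields a bound of the shape $\mu\|\w\|_{\V}^2+C(\mu)N^4\|\w\|_{\H}^2$; tracking the constants exactly as in \cite{MTM7} gives $C(\mu)=\tfrac{27}{32\mu^3}$. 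Substituting the three estimates back into the splitting, the $\mu\|\w\|_{\V}^2$ terms cancel and \eqref{fe2} follows for every $\u\in\V$ and $\v\in\mathbb{B}_N$.

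The step I expect to be the real (and essentially the only) obstacle is the very first move in the convective estimate: one must pick the decomposition of $\B(\u)-\B(\v)$ that leaves the surviving trilinear form $b(\w,\cdot,\w)$ acting on $\v$, not on $\u$, because only $\v$ is assumed to lie in the $\widetilde{\L}^4$–ball. The other natural splitting $\B(\u)-\B(\v)=\B(\w,\u)+\B(\v,\w)$ instead leaves $b(\w,\w,\u)$, which cannot be controlled without an a priori bound on $\u$, and would not give an inequality of the form \eqref{fe2}. Once that choice is made, the cancellation $b(\u,\w,\w)=0$, the two–dimensional Ladyzhenskaya interpolation, and the optimisation of the Young parameter are routine, and the whole argument runs parallel to the proofs of Theorems~\ref{thm2.2}–\ref{thm2.3} and of \cite{MTM7}.
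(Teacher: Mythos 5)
Your proposal is correct and follows essentially the same route as the paper's (the paper itself only cites Remark 2.4 of \cite{MTM7}, but the identical scheme — the splitting $\langle\B(\u)-\B(\v),\w\rangle=\langle\B(\w,\v),\w\rangle$ with $\w=\u-\v$, the sign of the $\mathcal{C}$-difference via \eqref{2pp11}, H\"older with the $\widetilde{\L}^4$-norm landing on $\v$, Ladyzhenskaya's interpolation and Young's inequality absorbing into $\mu\|\w\|_{\V}^2$ — is exactly what is used, e.g., in the estimate \eqref{4.22}). The only cosmetic point is the constant: with the Ladyzhenskaya constant $\sqrt{2}$ and full absorption into $\mu\|\w\|_{\V}^2$, your Young step actually produces $\tfrac{27}{128\mu^3}N^4\|\w\|_{\H}^2$, which is smaller than $\tfrac{27}{32\mu^3}N^4\|\w\|_{\H}^2$, so \eqref{fe2} follows a fortiori.
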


\begin{lemma}[Lemma 2.5, \cite{MTM7}]\label{lem2.8}
	The operator $\G:\V\cap\widetilde{\L}^{r+1}\to \V'+\widetilde{\L}^{\frac{r+1}{r}}$ is demicontinuous. 
\end{lemma}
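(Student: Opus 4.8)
The statement to establish is Lemma \ref{lem2.8}: the operator $\G=\mu\A+\B+\beta\mathcal{C}$ is demicontinuous from $\V\cap\widetilde{\L}^{r+1}$ into $\V'+\widetilde{\L}^{\frac{r+1}{r}}$. By definition, I must show that if $\u_k\to\u$ strongly in $\V\cap\widetilde{\L}^{r+1}$, then $\G(\u_k)\xrightarrow{w}\G(\u)$ in $\V'+\widetilde{\L}^{\frac{r+1}{r}}$, i.e. $\langle\G(\u_k),\v\rangle\to\langle\G(\u),\v\rangle$ for every test function $\v\in\V\cap\widetilde{\L}^{r+1}$. The natural approach is to treat the three summands separately and use the linearity/boundedness estimates already recorded in the excerpt: \eqref{2.7a} for $\A$, \eqref{2p9}--\eqref{212} for $\B$, and \eqref{213}-type Lipschitz bounds (or directly H\"older) for $\mathcal{C}$. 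Since a strongly convergent sequence is bounded, I first fix $R>0$ with $\|\u_k\|_{\V}+\|\u_k\|_{\widetilde{\L}^{r+1}}\le R$ for all $k$ and $\|\u\|_{\V}+\|\u\|_{\widetilde{\L}^{r+1}}\le R$.

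For the linear term, $\langle\mu\A\u_k-\mu\A\u,\v\rangle=\mu\langle\nabla(\u_k-\u),\nabla\v\rangle\le\mu\|\u_k-\u\|_{\V}\|\v\|_{\V}\to0$, which is immediate from \eqref{2.7a}. For the bilinear term, I write $\B(\u_k)-\B(\u)=\B(\u_k-\u,\u_k)+\B(\u,\u_k-\u)$ and test against $\v$; using $\langle\B(\p,\q),\v\rangle=-\langle\B(\p,\v),\q\rangle=-b(\p,\v,\q)$ together with the H\"older estimate $|b(\p,\v,\q)|\le\|\p\|_{\widetilde{\L}^{r+1}}\|\q\|_{\widetilde{\L}^{\frac{2(r+1)}{r-1}}}\|\v\|_{\V}$ and the interpolation $\|\q\|_{\widetilde{\L}^{\frac{2(r+1)}{r-1}}}\le\|\q\|_{\widetilde{\L}^{r+1}}^{\frac{2}{r-1}}\|\q\|_{\H}^{\frac{r-3}{r-1}}$, everything is controlled by $C(R)\|\u_k-\u\|_{\widetilde{\L}^{r+1}}\|\v\|_{\V}$ (and the analogous term with the roles swapped), which tends to $0$. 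For the Forchheimer term, I use the pointwise bound $\big||\u_k|^{r-1}\u_k-|\u|^{r-1}\u\big|\le r\big(|\u_k|^{r-1}+|\u|^{r-1}\big)|\u_k-\u|$ (mean value theorem applied to $\z\mapsto|\z|^{r-1}\z$) and H\"older's inequality with exponents $\frac{r+1}{r-1}$, $r+1$ and the conjugate $\frac{r+1}{r}$ to get $\|\mathcal{C}(\u_k)-\mathcal{C}(\u)\|_{\widetilde{\L}^{\frac{r+1}{r}}}\le C r\big(\|\u_k\|_{\widetilde{\L}^{r+1}}^{r-1}+\|\u\|_{\widetilde{\L}^{r+1}}^{r-1}\big)\|\u_k-\u\|_{\widetilde{\L}^{r+1}}\le C(R)\|\u_k-\u\|_{\widetilde{\L}^{r+1}}\to0$; pairing with $\v$ and using $|\langle\mathcal{C}(\u_k)-\mathcal{C}(\u),\v\rangle|\le\|\mathcal{C}(\u_k)-\mathcal{C}(\u)\|_{\widetilde{\L}^{\frac{r+1}{r}}}\|\v\|_{\widetilde{\L}^{r+1}}$ finishes this piece.

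Summing the three estimates gives $|\langle\G(\u_k)-\G(\u),\v\rangle|\le C(R)\big(\|\u_k-\u\|_{\V}+\|\u_k-\u\|_{\widetilde{\L}^{r+1}}\big)\|\v\|_{\V\cap\widetilde{\L}^{r+1}}\to0$, which in fact shows $\G(\u_k)\to\G(\u)$ strongly in $\V'+\widetilde{\L}^{\frac{r+1}{r}}$ (hence a fortiori weakly), establishing demicontinuity. The only mildly delicate point — the "main obstacle" in an otherwise routine argument — is bookkeeping the dual-space splitting: the bilinear contribution lives naturally in $\V'$ while the nonlinear contribution lives in $\widetilde{\L}^{\frac{r+1}{r}}$, so one must be careful that the test space is $\V\cap\widetilde{\L}^{r+1}$ and that the interpolation exponent $\frac{2(r+1)}{r-1}$ stays admissible (it does, since $2\le\frac{2(r+1)}{r-1}\le r+1$ precisely when $r\ge 3$, which is the standing hypothesis for this operator). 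No compactness or weak-convergence machinery is needed; the estimates are all of Lipschitz-on-bounded-sets type, so strong convergence of the inputs yields strong convergence of the outputs.
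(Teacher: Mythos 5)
Your proof is correct: the term-by-term estimates (Cauchy--Schwarz for $\mu\A$, H\"older plus the interpolation $\|\cdot\|_{\widetilde{\L}^{\frac{2(r+1)}{r-1}}}\leq\|\cdot\|_{\widetilde{\L}^{r+1}}^{\frac{2}{r-1}}\|\cdot\|_{\H}^{\frac{r-3}{r-1}}$ for $\B$, and the mean-value/H\"older bound for $\mathcal{C}$, all valid for $r\geq 3$) are exactly the ingredients of the argument in the cited reference \cite{MTM7}, which this paper quotes without reproducing. In fact you prove slightly more than is asked, namely Lipschitz continuity on bounded sets and hence strong continuity of $\G$ into $\V'+\widetilde{\L}^{\frac{r+1}{r}}$, which immediately implies the stated demicontinuity.
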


\subsection{Stochastic convective  Brinkman-Forchheimer  equations}\label{sec3}
Let $(\Omega,\mathscr{F},\mathbb{P})$ be a complete probability space equipped with an increasing family of sub-sigma fields $\{\mathscr{F}_t\}_{0\leq t\leq T}$ of $\mathscr{F}$ satisfying:
\begin{enumerate}
	\item [(i)] $\mathscr{F}_0$ contains all elements $F\in\mathscr{F}$ with $\mathbb{P}(F)=0$,
	\item [(ii)] $\mathscr{F}_t=\mathscr{F}_{t+}=\bigcap\limits_{s>t}\mathscr{F}_s,$ for $0\leq t\leq T$.
\end{enumerate}  
We consider the following stochastic  convective Brinkman-Forchheimer  equations perturbed by additive degenerate Gaussian noise:
\begin{equation}\label{31}
\left\{
\begin{aligned}
\d\u(t)-\mu \Delta\u(t)&+(\u(t)\cdot\nabla)\u(t)+\beta|\u(t)|^{r-1}\u(t)+\nabla p(t)\\&=\sigma\d\W(t), \ \text{ in } \ \mathcal{O}\times(0,T), \\ \nabla\cdot\u(t)&=0, \ \text{ in } \ \mathcal{O}\times(0,T), \\
\u(t)&=\mathbf{0},\ \text{ on } \ \partial\mathcal{O}\times(0,T), \\
\u(0)&=\x, \ \text{ in } \ \mathcal{O},
\end{aligned}
\right.
\end{equation} 
where $\W(\cdot)$  is a cylindrical Wiener process defined on $(\Omega,\mathscr{F},\{\mathscr{F}_t\}_{t\geq 0},\mathbb{P})$ taking values in the Hilbert space $\H$. 
On  taking orthogonal projection $\mathrm{P}_{\H}$ onto the first equation in \eqref{31}, we get 
\begin{equation}\label{32}
\left\{
\begin{aligned}
\d\u(t)+[\mu \A\u(t)+\B(\u(t))+\beta\mathcal{C}(\u(t))]\d t&=\sigma\d\W(t), \ t\in(0,T),\\
\u(0)&=\x,
\end{aligned}
\right.
\end{equation}
where $\x\in\H$. For any given $N\in\N$, we define the projection $\mathrm{P}_N:\H\to\H_N$ by 
$$\mathrm{P}_N\u:=\sum_{k=1}^N(\u,e_k) e_k, \ \u\in\H, $$  where $\{e_k\}_{k=1}^{\infty}$ is an orthonormal basis in $\H$ and $\H_N=\mathrm{span}\{e_1,\ldots,e_N\}$. In this section, we assume that  the noise co-efficient $\sigma$ satisfies the following highly degenerate but essentially elliptic condition provided in  Section 4.5 of \cite{MHJC}. 
\begin{assumption}\label{ass4.1}
	There exists a sufficiently large but fixed $N_0\in\mathbb{N}$ such that $\mathrm{Range}(\sigma)=\mathrm{P}_{N_0}\H$ and $\sigma\u=\mathbf{0}$ if $\u\in(\mathrm{I}-\mathrm{P}_{N_0})$, where $\mathrm{I}$ is the identity mapping. 
\end{assumption}
For the fixed $N_0$ defined in the Assumption \ref{ass4.1}, we separate the Hilbert space $\H$ into the low and high frequency parts as 
\begin{align*}
\H=\mathrm{P}_{N_0}\H+(\mathrm{I}-\mathrm{P}_{N_0})\H, 
\end{align*}
and we denote $\H^l:=\mathrm{P}_{N_0}\H$ and $\H^h:=(\mathrm{I}-\mathrm{P}_{N_0})\H$. For any $\u\in\H$, we define $\u^l:=\mathrm{P}_{N_0}\u$  and $\u^h:=(\mathrm{I}-\mathrm{P}_{N_0})\u$. The following lemma is easy to prove and one can get a proof from Lemma 3.2, \cite{WHSL}. 
\begin{lemma}
	For any $\alpha>0$, we have 
	\begin{align}\label{4.3}
	\|\u^l\|_{\V}\leq \lambda_{N_0}^{1/2}\|\u^l\|_{\H}, \ \|\u^h\|_{\V}\geq \lambda_{N_0}^{1/2}\|\u^h\|_{\H}. 
	\end{align}
\end{lemma}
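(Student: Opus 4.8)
The plan is to read both inequalities off the spectral decomposition of the Stokes operator $\A$ recorded earlier in the section. Recall that $\{e_k\}_{k\ge1}$ is an orthonormal basis of $\H$ consisting of eigenvectors of $\A$ with $\A e_k=\lambda_k e_k$ and $0<\lambda_1\le\lambda_2\le\cdots$, and that, by \eqref{poin}, for $\u\in\V$ one has $\|\u\|_{\V}^2=\langle\A\u,\u\rangle=\sum_{k\ge1}\lambda_k|(\u,e_k)|^2$, while $\|\u\|_{\H}^2=\sum_{k\ge1}|(\u,e_k)|^2$. Since $\mathrm{P}_{N_0}$ commutes with $\A$ on the relevant spans, whenever $\u\in\V$ both $\u^l=\mathrm{P}_{N_0}\u$ and $\u^h=(\mathrm{I}-\mathrm{P}_{N_0})\u$ lie in $\V$; this is the only standing assumption needed (and the stray hypothesis $\alpha>0$ in the statement plays no role — it is a vestige of a fractional-power formulation).

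For the low-frequency estimate, I would write $\u^l=\sum_{k=1}^{N_0}(\u,e_k)e_k$ and compute
\begin{align*}
\|\u^l\|_{\V}^2=\sum_{k=1}^{N_0}\lambda_k|(\u,e_k)|^2\le\lambda_{N_0}\sum_{k=1}^{N_0}|(\u,e_k)|^2=\lambda_{N_0}\|\u^l\|_{\H}^2,
\end{align*}
using $\lambda_k\le\lambda_{N_0}$ for $1\le k\le N_0$; taking square roots gives the first bound. For the high-frequency estimate, write $\u^h=\sum_{k>N_0}(\u,e_k)e_k$ and compute
\begin{align*}
\|\u^h\|_{\V}^2=\sum_{k>N_0}\lambda_k|(\u,e_k)|^2\ge\lambda_{N_0+1}\sum_{k>N_0}|(\u,e_k)|^2\ge\lambda_{N_0}\|\u^h\|_{\H}^2,
\end{align*}
using $\lambda_k\ge\lambda_{N_0+1}\ge\lambda_{N_0}$ for $k>N_0$; taking square roots gives the second bound.

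There is no real obstacle here: the statement is a routine Poincaré-type separation of high and low modes, and the proof is two lines of bookkeeping with the eigenvalue ordering. The only points worth a sentence in the write-up are the membership $\u^l,\u^h\in\V$ and the fact that, if one prefers to state the high-frequency inequality for arbitrary $\u\in\H$, both sides may be read in $[0,\infty]$; in all the uses that follow, $\u$ will in fact belong to $\V$ (or to $\D(\A)$), so this causes no trouble.
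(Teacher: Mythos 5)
Your proof is correct and is precisely the standard spectral argument the paper has in mind: the paper itself omits the proof, calling the lemma easy and referring to Lemma 3.2 of \cite{WHSL}, and your expansion of $\u^l$ and $\u^h$ in the eigenbasis of $\A$ together with the ordering of the eigenvalues is exactly that argument. Your remarks about $\u^l,\u^h\in\V$ and the irrelevance of the parameter $\alpha$ in the statement are accurate and harmless.
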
 From the Assumption \ref{ass4.1}, it is clear that $\Tr(\sigma\sigma^*)<\infty$ and $\sigma:\H^l\to\H^l$ is invertible, that is, there exists a constant $C_{\sigma}>0$ such that 
\begin{align}\label{4.1}
\|\sigma^{-1}\u\|_{\H}\leq C_{\sigma}\|\u\|_{\H}, \ \text{ for all }\ \u\in\H^l. 
\end{align}
In this work, we choose a large (but fixed) $N_0$ to make the noise has essential ellipticity effect (see Section 4.5, \cite{MHJC}).

\subsection{Global strong solution}
In this subsection, we give the definition of a unique global strong solution in the probabilistic sense to the system (\ref{32}). 
\begin{definition}[Global strong solution]
	Let $\x\in\H$ be given. An $\H$-valued $(\mathscr{F}_t)_{t\geq 0}$-adapted stochastic process $\u(\cdot)$ is called a \emph{strong solution} to the system (\ref{32}) if the following conditions are satisfied: 
	\begin{enumerate}
		\item [(i)] the process $\u\in\mathrm{L}^2(\Omega;\mathrm{L}^{\infty}(0,T;\H)\cap\mathrm{L}^2(0,T;\V))\cap\mathrm{L}^{r+1}(\Omega;\mathrm{L}^{r+1}(0,T;\widetilde{\L}^{r+1}))$ and $\u(\cdot)$ has a $\V\cap\widetilde{\L}^{r+1}$-valued  modification, which is progressively measurable with continuous paths in $\H$ and $\u\in\C([0,T];\H)\cap\mathrm{L}^2(0,T;\V)\cap\mathrm{L}^{r+1}(0,T;\widetilde{\L}^{r+1})$, $\mathbb{P}$-a.s.,
		\item [(ii)] the following equality holds for every $t\in [0, T ]$, as an element of $\V'+\wi\L^{\frac{r+1}{r}},$ $\mathbb{P}$-a.s.
		\begin{align}\label{4.4}
		\u(t)&=\u_0-\int_0^t\left[\mu \A\u(s)+\B(\u(s))+\beta\mathcal{C}(\u(s))\right]\d s+\int_0^t\sigma\d \W(s),
		\end{align}
			\item [(iii)] the following It\^o formula (energy equality) holds true: 
				\begin{align}
			&	\|\u(t)\|_{\H}^2+2\mu \int_0^t\|\u(s)\|_{\V}^2\d s+2\beta\int_0^t\|\u(s)\|_{\widetilde{\L}^{r+1}}^{r+1}\d s\nonumber\\&=\|{\u_0}\|_{\H}^2+\Tr(\sigma\sigma^*)t+2\int_0^t(\sigma\d\W(s),\u(s)),
			\end{align}
			for all $t\in[0,T]$, $\mathbb{P}$-a.s.
	\end{enumerate}
\end{definition}
An alternative version of condition (\ref{4.4}) is to require that for any  $\v\in\V\cap\widetilde{\L}^{r+1}$:
\begin{align}\label{4.5}
(\u(t),\v)&=(\u_0,\v)-\int_0^t\langle\mu \A\u(s)+\B(\u(s))+\beta\mathcal{C}(\u(s)),\v\rangle\d s+\int_0^t\left(\sigma\d \W(s),\v\right),\ \mathbb{P}\text{-a.s.}
\end{align}	
\begin{definition}
	A strong solution $\u(\cdot)$ to (\ref{32}) is called a
	\emph{pathwise  unique strong solution} if
	$\widetilde{\u}(\cdot)$ is an another strong
	solution, then $$\mathbb{P}\Big\{\omega\in\Omega:\u(t)=\widetilde{\u}(t),\ \text{ for all }\ t\in[0,T]\Big\}=1.$$ 
\end{definition}
\begin{theorem}[Theorem 3.7, \cite{MTM8}]\label{exis}
	Let $\x\in\H$, for $r\geq 3$ be given ($2\beta\mu\geq 1,$ for $n=r=3$).  Then there exists a \emph{pathwise unique strong solution}
	$\u(\cdot)$ to the system (\ref{32}) such that \begin{align*}\u&\in\mathrm{L}^2(\Omega;\mathrm{L}^{\infty}(0,T;\H)\cap\mathrm{L}^2(0,T;\V))\cap\mathrm{L}^{r+1}(\Omega;\mathrm{L}^{r+1}(0,T;\widetilde{\L}^{r+1})),\end{align*} with $\mathbb{P}$-a.s., continuous trajectories in $\H$ satisfying 
	\begin{align}
	\E\left[\sup_{t\in[0,T]}\|\u(t)\|_{\H}^2+2\mu \int_0^T\|\u(t)\|_{\V}^2\d t+2\beta\int_0^T\|\u(t)\|_{\widetilde{\L}^{r+1}}^{r+1}\d t\right]\leq C(\|\x\|_{\H}^2+\Tr(\sigma\sigma^*)T).
	\end{align}
\end{theorem}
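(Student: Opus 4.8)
The plan is to construct the solution by a Faedo--Galerkin truncation combined with the \emph{stochastic} version of the monotonicity (Minty--Browder) technique, exactly as in \cite{MTM8}; since this scheme invokes no compactness, the degeneracy of $\sigma$ (Assumption \ref{ass4.1}) is irrelevant to the construction. Write $\G(\u):=\mu\A\u+\B(\u)+\beta\mathcal{C}(\u)$ as in Theorem \ref{thm2.2}. First I would project \eqref{32} onto $\H_n=\mathrm{span}\{e_1,\dots,e_n\}$ by $\mathrm{P}_n$, obtaining the finite-dimensional It\^o equation $\d\u^n(t)+\mathrm{P}_n\G(\u^n(t))\,\d t=\mathrm{P}_n\sigma\,\d\W(t)$, $\u^n(0)=\mathrm{P}_n\x$, whose drift is locally Lipschitz on $\H_n$, so a unique local solution exists. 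Applying It\^o's formula to $\|\u^n(t)\|_\H^2$, using $(\B(\u^n),\u^n)=0$ (see \eqref{b0}) and $(\mathcal{C}(\u^n),\u^n)=\|\u^n\|_{\widetilde{\L}^{r+1}}^{r+1}$, taking the supremum in $t$, then expectations, and estimating the martingale term by the Burkholder--Davis--Gundy inequality (absorbing a fraction of $\E\sup_{t}\|\u^n(t)\|_\H^2$ into the left side), I would obtain the bound
\begin{align*}
\E\Big[\sup_{t\in[0,T]}\|\u^n(t)\|_\H^2+2\mu\int_0^T\|\u^n(t)\|_\V^2\,\d t+2\beta\int_0^T\|\u^n(t)\|_{\widetilde{\L}^{r+1}}^{r+1}\,\d t\Big]\le C\big(\|\x\|_\H^2+\Tr(\sigma\sigma^*)T\big),
\end{align*}
uniformly in $n$; this also rules out explosion, so each $\u^n$ is global.

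These uniform bounds yield, along a subsequence, $\u^n\xrightarrow{w^*}\u$ in $\mathrm{L}^2(\Omega;\mathrm{L}^\infty(0,T;\H))$, $\u^n\xrightarrow{w}\u$ in $\mathrm{L}^2(\Omega\times(0,T);\V)$ and in $\mathrm{L}^{r+1}(\Omega\times(0,T);\widetilde{\L}^{r+1})$, and --- using H\"older's and the interpolation estimate \eqref{212} for the drift --- $\G(\u^n)\xrightarrow{w}\G_0$ in $\mathrm{L}^2(\Omega\times(0,T);\V')+\mathrm{L}^{\frac{r+1}{r}}(\Omega\times(0,T);\widetilde{\L}^{\frac{r+1}{r}})$. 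Passing to the limit in the Galerkin identity gives the limit equation $\d\u(t)+\G_0(t)\,\d t=\sigma\,\d\W(t)$, $\u(0)=\x$, understood in $\V'+\widetilde{\L}^{\frac{r+1}{r}}$; the infinite-dimensional It\^o formula (applicable since $\u\in\mathrm{L}^2(0,T;\V)\cap\mathrm{L}^{r+1}(0,T;\widetilde{\L}^{r+1})$, the drift $\G_0\in\mathrm{L}^2(0,T;\V')+\mathrm{L}^{\frac{r+1}{r}}(0,T;\widetilde{\L}^{\frac{r+1}{r}})$ and the noise is additive) then gives $\u\in\C([0,T];\H)$ $\mathbb{P}$-a.s.\ (with measurability inherited from the construction) and the energy identity $\E\|\u(t)\|_\H^2+2\E\int_0^t\langle\G_0(s),\u(s)\rangle\,\d s=\|\x\|_\H^2+\Tr(\sigma\sigma^*)t$, together with its exponentially weighted counterpart.

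The hard part is identifying $\G_0=\G(\u)$, for which I would run the stochastic Minty--Browder argument. With $\eta$ as in \eqref{215} when $r>3$ (and $\eta=0$ when $n=r=3$, $2\beta\mu\ge1$, by the global monotonicity \eqref{218}), the local monotonicity \eqref{fe} gives, for every $\v\in\mathrm{L}^\infty(\Omega\times(0,T);\H_m)$ with $m<n$,
\begin{align*}
\E\int_0^T e^{-2\eta t}\,\big\langle\G(\v(t))-\G(\u^n(t))+\eta\big(\v(t)-\u^n(t)\big),\,\v(t)-\u^n(t)\big\rangle\,\d t\ \ge\ 0 .
\end{align*}
Substituting the weighted Galerkin energy equality (obtained by applying It\^o to $e^{-2\eta t}\|\u^n(t)\|_\H^2$) for the terms carrying $\u^n$, taking $\liminf_{n\to\infty}$ --- weak convergence for the $\u^n$-linear terms, weak lower semicontinuity of $\E\|\cdot\|_\H^2$ for the quadratic endpoint term, and the strong convergences $\mathrm{P}_n\x\to\x$ in $\H$ and $\mathrm{P}_n\sigma\to\sigma$ --- and inserting the limit energy identity, one deduces
\begin{align*}
\E\int_0^T e^{-2\eta t}\,\big\langle\G(\v(t))-\G_0(t)+\eta\big(\v(t)-\u(t)\big),\,\v(t)-\u(t)\big\rangle\,\d t\ \ge\ 0
\end{align*}
for all such $\v$, hence by density for all $\v\in\mathrm{L}^\infty(\Omega\times(0,T);\H)\cap\mathrm{L}^2(\Omega\times(0,T);\V)\cap\mathrm{L}^{r+1}(\Omega\times(0,T);\widetilde{\L}^{r+1})$. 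Taking $\v=\u+\lambda\w$, dividing by $\lambda>0$, letting $\lambda\downarrow0$, and using the demicontinuity of $\G$ (Lemma \ref{lem2.8}) forces $\langle\G_0(t)-\G(\u(t)),\w(t)\rangle=0$ for all admissible $\w$, so $\G_0=\G(\u)$ and $\u$ is a strong solution with the claimed integrability and energy estimate. The delicate point is precisely that the degeneracy of the noise precludes any tightness/Skorokhod route, so the identification must be driven entirely by the monotonicity of $\G$ and a careful accounting of the expectations in the weighted It\^o identities; the critical case $n=r=3$ is exactly where $2\beta\mu\ge1$ is needed to upgrade \eqref{fe} to the global bound \eqref{218} (allowing $\eta=0$).

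Finally, for pathwise uniqueness: if $\u_1,\u_2$ are two strong solutions, then, the noise being additive, $\w:=\u_1-\u_2$ solves a random equation with no martingale part, so It\^o's formula for $\|\w(t)\|_\H^2$, combined with $\langle\B(\u_1)-\B(\u_2),\w\rangle=\langle\B(\w,\u_2),\w\rangle$, the monotonicity behind \eqref{fe} (resp.\ \eqref{218}) and the lower bound \eqref{2.23} for the Forchheimer difference, yields $\|\w(t)\|_\H^2+\mu\int_0^t\|\w(s)\|_\V^2\,\d s\le\|\w(0)\|_\H^2+2\eta\int_0^t\|\w(s)\|_\H^2\,\d s$ $\mathbb{P}$-a.s.; Gronwall's inequality with $\w(0)=\mathbf{0}$ then gives $\u_1\equiv\u_2$ on $[0,T]$, $\mathbb{P}$-a.s.
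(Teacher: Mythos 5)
Your proposal is essentially the paper's route: the paper itself gives no proof of this statement but imports it verbatim from \cite{MTM8}, where it is established exactly as you describe --- Faedo--Galerkin truncation, It\^o plus Burkholder--Davis--Gundy energy bounds, weak limits, and a stochastic Minty--Browder identification driven by the (local) monotonicity of $\G$ and the demicontinuity of Lemma \ref{lem2.8}, with pathwise uniqueness from the same monotonicity and Gronwall's inequality. The sketch is sound as written for $r>3$ and for $n=r=3$ with $2\beta\mu\ge 1$; the only subcase it leaves implicit is $n=2$, $r=3$ with $2\beta\mu<1$, where the constant $\eta$ of \eqref{215} is unavailable and one must instead use the $\widetilde{\L}^4$-ball--dependent local monotonicity \eqref{fe2} together with a localized (stochastic-weight) Minty--Browder argument, as in the cited source.
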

\begin{remark}[\cite{ICAM,MTM8}]
	For $n=2$ and $r\in[1,3]$,  we know  that $\V\cap\wi\L^{r+1}=\V$ and using Gagliardo-Nirenberg interpolation inequality, we get  $\C([0,T];\H)\cap\mathrm{L}^2(0,T;\V)\subset\mathrm{L}^{r+1}(0,T;\widetilde{\L}^{r+1})$. In this case, the regularity of $\u(\cdot)$ given in Theorem \ref{exis} becomes 
	\begin{align*}
\u\in\mathrm{L}^4(\Omega;\mathrm{L}^{\infty}(0,T;\H))\cap\mathrm{L}^2(\Omega;\mathrm{L}^2(0,T;\V))\cap\mathrm{L}^{r+1}(\Omega;\mathrm{L}^{r+1}(0,T;\widetilde{\L}^{r+1})),
	\end{align*}
	with a $\V\cap\wi\L^{r+1}$-valued modification having paths in $\u\in\C([0,T];\H)\cap\mathrm{L}^2(0,T;\V)$, $\mathbb{P}$-a.s., satisfying 
	\begin{align}
&	\E\left[\sup_{t\in[0,T]}\|\u(t)\|_{\H}^4+4\mu\int_0^T\|\u(t)\|_{\H}^2\|\u(t)\|_{\V}^2\d t+4\beta\int_0^T\|\u(t)\|_{\H}^2\|\u(t)\|_{\widetilde{\L}^{r+1}}^{r+1}\d t\right]\nonumber\\&\leq C(\|\x\|_{\H}^4+\Tr(\sigma\sigma^*)^2T^2).
	\end{align}
\end{remark}

	\section{Asymptotic log-Harnack inequality }\label{sec4}\setcounter{equation}{0}
	In this section, we establish the asymptotic log-Harnack inequality for the trasition semigroup associated with the stochastic convective Brinkman-Forchheimer equations \eqref{32} and discuss about its consequences.  Asymptotic log-Harnack inequality  for stochastic 2D hydrodynamical-type systems with degenerate noise is obtained in \cite{WHSL} and we mainly follow this work to obtain our main results.

For any $\x,\y\in\H$ and $f:\H\to\mathbb{R}$, we denote $$\|\nabla f(\x)\|_{\H}:=\limsup_{\|\x-\y\|_{\H}\to 0}\frac{|f(\x)-f(\y)|}{\|\x-\y\|_{\H}}, \ \text{ and } \ \|\nabla\f\|_{\infty}:=\sup_{\x\in\H}\|\nabla f(\x)\|_{\H}.$$ We define $$\mathrm{Lip}(\H):=\left\{f:\H\to\R:\|\nabla f\|_{\infty}<\infty\right\},$$ as the set of all Lipschitz continuous functions on $\H$. We denote $\u(t,\x)$ as the unique strong solution of \eqref{32} with the initial data $\x\in\H$, and the associated Markov semigroup as $$\mathrm{P}_tf(\x):=\E[f(\u(t,\x))], \ t\geq 0,$$ where $f$ is a bounded measurable function on $\H$. We denote $\mathscr{B}_b(\H):=\{f:\H\to\R:f \text{ is bounded measurable}\}$  and $\mathscr{B}_b^+(\H):=\{f\in\mathscr{B}_b(\H):f\geq 0\}$ (see \cite{GDJZ}). Let us now give the definition of asymptotically strong Feller semigroup introduced Hairer and Mattingly in  \cite{MHJC} (see Definition 3.8). We denote $\mathcal{U}_{\x}$ as the collection of all open sets $\mathrm{U}$ containing $\x$ and indicator function as $\chi_{\A}(\cdot)$, for any measurable set $\A\subset\mathbb{H}$. 
\begin{definition}
	A Markov transition semigroup $\mathrm{P}_t$ on a Polish space $\mathbb{X}$ is \emph{asymptotically strong Feller} at $\x\in\mathbb{X}$, if there exists a totally separating system of pseudo-metrics $\{d_n\}_{n\geq 1}$ for $\mathbb{X}$ and a sequence $t_n > 0$  such that $$\inf_{\mathrm{U}\in\mathcal{U}_{\x}}\limsup_{n\to\infty}\sup_{\y\in\mathrm{U}}d_n(\mathrm{P}_{t_n}(\x,\cdot),\mathrm{P}_{t_n}(\y,\cdot))=0,$$ where $\mathrm{P}_t(\x,\A)=\mathrm{P}_t\chi_{\A}(\x)$, for any $\x\in\mathbb{X}$ and measurable set $\A\subset\mathbb{X}$. The semigroup $\mathrm{P}_t$ is called asymptotically strong Feller if it is asymptotically strong Feller at any $\x \in\mathbb{X}$.
\end{definition}
For the definition of totally separating system of pseudo-metrics, interested readers are referred to see \cite{MHJC}. 
From Definition 2.2, \cite{JBFY}, we know that the following inequality is called an asymptotic log-Harnack inequality for the transition semigroup $\mathrm{P}_t$: 
$$\mathrm{P}_t\log f(\y)\leq \mathrm{P}_t\log f(\x)+\Phi(\x,\y)+\Psi_t(\x,\y)\|\nabla \log f\|_{\infty}, \ t>0,$$ for any $f\in\mathscr{B}_b^+(\H)$ with $\|\nabla\log f\|_{\infty}<\infty$, where $\Phi,\Psi_t:\H\times\H\to(0,\infty)$  are measurable with $\Psi_t\downarrow 0$ as $t\uparrow\infty$. 

\subsection{The case $n=2$ and $r\in[1,3]$} 
Let us now state and prove our main result on the asymptotic log-Harnack inequality for the transition semigroup $\mathrm{P}_t$ associated with the SCBF equations \eqref{32} for the case $n=2$ and $r\in[1,3]$. 
\begin{theorem}\label{maint}
Let $n=2$ and $r\in[1,3]$ and $\u(t,\x)$ be the unique strong solution to the system \eqref{32}	with the initial data $\x\in\H$. Suppose that the noise co-efficient $\sigma$ satisfies Assumption \ref{ass4.1} and $$\lambda_1\mu^3\geq 8\Tr(\sigma\sigma^*).$$ Then, for any $\x,\y\in\H$ and for any $f\in\mathscr{B}^+_b(\H)$ with $\|\nabla\log f\|_{\infty}<\infty$, we have the following \emph{asymptotic log-Harnack inequality}:
	\begin{align}\label{4.2}
	\mathrm{P}_t\log f(\y)&\leq \log\mathrm{P}_tf(\x)+\gamma e^{k\|\y\|_{\H}^2}\|\x-\y\|_{\H}^2+2e^{-\theta t}e^{k\|\y\|_{\H}^2}\|\x-\y\|_{\H}\|\nabla\log f\|_{\infty}, \ t >0, 
	\end{align}
	where the constants $k\leq \frac{\lambda_1\mu}{4\Tr(\sigma\sigma^*)}$, $\theta=\frac{\mu\lambda_{N_0}-k\Tr(\sigma\sigma^*)}{2}$ and $\gamma=\frac{\mu^2C_{\sigma}^2\lambda_{N_0}^2}{4(\mu\lambda_{N_0}-k\Tr(\sigma\sigma^*))}.$ 
\end{theorem}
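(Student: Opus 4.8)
The plan is to combine the \emph{asymptotic coupling} construction with Girsanov's transformation, in the spirit of \cite{LXu,WHSL,JBFY}. Fix $\x,\y\in\H$ and let $\u(\cdot)=\u(\cdot,\x)$ be the strong solution of \eqref{32} from $\x$. For a coupling parameter $\lambda>0$ to be chosen of order $\mu\lambda_{N_0}$, introduce the \emph{nudged} process $\v(\cdot)$ solving
\begin{align*}
\d\v(t)+[\mu\A\v(t)+\B(\v(t))+\beta\mathcal{C}(\v(t))]\d t=\sigma\,\d\W(t)+\lambda\mathrm{P}_{N_0}(\u(t)-\v(t))\,\d t,\qquad\v(0)=\y.
\end{align*}
The extra drift is a finite-dimensional, lower-order perturbation, so $\v$ exists, is unique, and enjoys the regularity of Theorem \ref{exis}; in particular $\int_0^t\|\v(s)\|_{\V}^2\,\d s<\infty$ $\mathbb{P}$-a.s. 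Set $\w:=\v-\u$, so $\w(0)=\y-\x$ and, since the noise cancels, $\w$ solves the random PDE $\partial_t\w+\mu\A\w+[\B(\v)-\B(\u)]+\beta[\mathcal{C}(\v)-\mathcal{C}(\u)]+\lambda\w^l=\mathbf{0}$.

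The next step is to establish exponential decay of $\|\w\|_{\H}$. Taking the $\H$-inner product with $\w$, using $b(\u,\w,\w)=0$ to write $\langle\B(\v)-\B(\u),\w\rangle=b(\w,\v,\w)$, the monotonicity \eqref{2pp11} (so that $\langle\mathcal{C}(\v)-\mathcal{C}(\u),\w\rangle\ge0$ may be discarded), the two-dimensional Ladyzhenskaya bound $\|\w\|_{\wi{\L}^{4}}^{2}\le\sqrt{2}\|\w\|_{\H}\|\w\|_{\V}$ and Young's inequality, one obtains
\begin{align*}
\frac{\d}{\d t}\|\w(t)\|_{\H}^2+\mu\|\w(t)\|_{\V}^2+2\lambda\|\w^l(t)\|_{\H}^2\le\frac{2}{\mu}\|\v(t)\|_{\V}^2\|\w(t)\|_{\H}^2 .
\end{align*}
Using \eqref{4.3} to bound $\|\w\|_{\V}^2\ge\|\w^h\|_{\V}^2\ge\lambda_{N_0}\|\w^h\|_{\H}^2$ and choosing $\lambda=\mu\lambda_{N_0}/2$ so the nudging controls the low modes at the same rate, Gronwall's inequality gives the pathwise bound $\|\w(t)\|_{\H}^2\le\|\x-\y\|_{\H}^2\exp\big(\tfrac{2}{\mu}\int_0^t\|\v(s)\|_{\V}^2\,\d s-\mu\lambda_{N_0}t\big)$.

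Now perform the change of measure. Writing the $\v$-equation as $\d\v=-[\mu\A\v+\B(\v)+\beta\mathcal{C}(\v)]\,\d t+\sigma\,\d\widetilde{\W}(t)$ with $\widetilde{\W}(t):=\W(t)-\lambda\int_0^t\sigma^{-1}\w^l(s)\,\d s$ (well defined since $\sigma^{-1}$ acts on $\H^l$ with $\|\sigma^{-1}\w^l\|_{\H}\le C_\sigma\|\w^l\|_{\H}$ by \eqref{4.1}), the decay estimate ensures $\int_0^t\|\sigma^{-1}\w^l(s)\|_{\H}^2\,\d s<\infty$ a.s., and a standard localization argument (using well-posedness of the coupled system) shows $R_t:=\exp\big(\lambda\int_0^t\langle\sigma^{-1}\w^l,\d\W\rangle-\tfrac{\lambda^2}{2}\int_0^t\|\sigma^{-1}\w^l\|_{\H}^2\,\d s\big)$ is a genuine $\mathbb{P}$-martingale. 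Under $\d\mathbb{Q}=R_t\,\d\mathbb{P}$, $\widetilde{\W}$ is a cylindrical Wiener process, hence $\v$ is a solution of \eqref{32} with datum $\y$ and $\mathrm{P}_t\log f(\y)=\E_{\mathbb{Q}}[\log f(\v(t))]$. Splitting $\log f(\v(t))=\log f(\u(t))+[\log f(\v(t))-\log f(\u(t))]$, using $|\log f(\v(t))-\log f(\u(t))|\le\|\nabla\log f\|_\infty\|\w(t)\|_{\H}$ and the Young-type inequality $ab\le a\log a-a+e^b$ with $a=R_t$, $b=\log\big(f(\u(t))/\mathrm{P}_tf(\x)\big)$, one arrives at
\begin{align*}
\mathrm{P}_t\log f(\y)\le\log\mathrm{P}_tf(\x)+\E[R_t\log R_t]+\|\nabla\log f\|_\infty\,\E_{\mathbb{Q}}\|\w(t)\|_{\H}.
\end{align*}
Since under $\mathbb{Q}$ one has $\log R_t=\lambda\int_0^t\langle\sigma^{-1}\w^l,\d\widetilde{\W}\rangle+\tfrac{\lambda^2}{2}\int_0^t\|\sigma^{-1}\w^l\|_{\H}^2\,\d s$, it follows that $\E[R_t\log R_t]\le\tfrac{\lambda^2C_\sigma^2}{2}\E_{\mathbb{Q}}\int_0^t\|\w(s)\|_{\H}^2\,\d s$. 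Because $\v$ under $\mathbb{Q}$ has the law of $\u(\cdot,\y)$, the decay estimate together with the exponential moment bound $\E\big[\exp(\tfrac{2}{\mu}\int_0^t\|\u(s,\y)\|_{\V}^2\,\d s)\big]\le C\,e^{k\|\y\|_{\H}^2}e^{k\Tr(\sigma\sigma^*)t}$ — valid precisely under $\lambda_1\mu^3\ge8\Tr(\sigma\sigma^*)$ and $k\le\frac{\lambda_1\mu}{4\Tr(\sigma\sigma^*)}$ — yields $\E_{\mathbb{Q}}\int_0^t\|\w(s)\|_{\H}^2\,\d s\le\frac{C\,e^{k\|\y\|_{\H}^2}}{\mu\lambda_{N_0}-k\Tr(\sigma\sigma^*)}\|\x-\y\|_{\H}^2$ and $\E_{\mathbb{Q}}\|\w(t)\|_{\H}\le 2e^{-\theta t}e^{k\|\y\|_{\H}^2}\|\x-\y\|_{\H}$ with $\theta=\frac{\mu\lambda_{N_0}-k\Tr(\sigma\sigma^*)}{2}$; substituting and optimizing the constants gives \eqref{4.2}.

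The main obstacle is the uniform-in-time exponential integrability of $\int_0^t\|\u(s,\y)\|_{\V}^2\,\d s$: it must be extracted from the It\^o energy balance by an exponential-supermartingale/stopping-time argument, and it is exactly the smallness condition $\lambda_1\mu^3\ge8\Tr(\sigma\sigma^*)$ that renders the resulting exponents summable and produces the factor $e^{k\|\y\|_{\H}^2}$ with an admissible constant $k$. A secondary point requiring care is the verification that $R_t$ is a true martingale, so that the Girsanov change of measure is legitimate, which again relies on the pathwise decay of $\w$ obtained in the second step; in dimension two with $r\in[1,3]$ this step is precisely where the Ladyzhenskaya inequality and the positivity of $\langle\mathcal{C}(\v)-\mathcal{C}(\u),\v-\u\rangle$ are used, explaining the restriction on $n$ and $r$ in this section.
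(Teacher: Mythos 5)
Your proposal is correct and follows essentially the same route as the paper: the same finite-dimensional nudged coupling with drift $\tfrac{\mu\lambda_{N_0}}{2}\mathrm{P}_{N_0}(\u-\v)$, the same Girsanov change of measure, the same pathwise decay estimate via Ladyzhenskaya plus the monotonicity of $\mathcal{C}$, the exponential moment bound obtained by the exponential-supermartingale argument (the paper's Lemma \ref{lem4.6}, where $\lambda_1\mu^3\geq 8\Tr(\sigma\sigma^*)$ enters exactly as you say), and the entropy (Young-type) inequality to convert $\E_{\widetilde{\mathbb{P}}}[\log f(\u(t,\x))]$ into $\log\mathrm{P}_tf(\x)+\E[\Phi(t)\log\Phi(t)]$, with matching constants $k$, $\theta$, $\gamma$. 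The only differences are cosmetic (sign convention for $\w$, using the pointwise inequality $ab\leq a\log a-a+e^{b}$ instead of quoting the lemma from \cite{MAAT}).
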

Applying Theorem 2.1, \cite{JBFY}, similar results obtained in Corollary 3.1, \cite{WHSL}, we have the following corollary. 
\begin{corollary}\label{cor4.4} 
	Under the assumptions of Theorem \ref{maint}, we have 
	\begin{enumerate}
		\item [(1)] \textbf{Gradient estimate.} For any $\y\in\H$, $t>0$ and $f\in\mathrm{Lip}_b(\H):=\mathrm{Lip}(\H)\cap\mathscr{B}_b(\H),$ we have 
		\begin{align}
		\|\nabla\mathrm{P}_tf(\y)\|_{\H}\leq \sqrt{2\gamma e^{k\|\y\|_{\H}^2}}\sqrt{\mathrm{P}_tf^2(\y)-(\mathrm{P}_tf)^2(\y)}+2e^{-\theta t+k\|\y\|_{\H}^2}\|\nabla f\|_{\infty}.
		\end{align}
		In particular, $\mathrm{P}_t$ is asymptotically strong Feller. 
		\item [(2)] \textbf{Asymptotic irreducibility.} Let $\y\in\H$ and $\A\subset\H$ be a measurable set such that $$\delta(\y,\A):= \liminf_{t\to\infty}\mathrm{P}_t(\y,\A)>0.$$ Then, we have $$\liminf_{t\to\infty}\mathrm{P}_t(\x,\A_{\e})>0, \ \text{ for any } \ \x\in\H, \ \e>0,$$ where $\A_{\e}:=\{\x\in\H:\rho(\x,\A)<\e\}$ with $\rho(\x,\A):=\inf\limits_{\z\in\A}\|\x-\z\|_{\H}.$ Furthermore, for any $\e_0\in(0,\delta(\y,\A))$, there exists a constant $t_0>0$ such that  $$\mathrm{P}_t(\x,\A_{\e})>0,\ \text{ provided }\  t\geq t_0\ \text{ and }\ 2e^{-\theta t+k\|\y\|_{\H}^2}\|\x-\y\|_{\H}<\e\e_0. $$ 
		\item [(3)] \textbf{Asymptotic heat kernel estimate.} If  $\mathrm{P}_t$ has an invariant probability measure $\nu$,  then for any $f\in\mathscr{B}_b^+(\H)$ with $\|\nabla f\|_{\infty}<\infty$, we have 
		$$\limsup_{t\to\infty}\mathrm{P}_tf(\y)\leq\log\left(\frac{\nu(e^{f})}{\int_{\H}e^{-\gamma e^{k\|\y\|_{\H}^2\|\x-\y\|_{\H}^2}}\nu(\d \x)}\right), \ \y\in\H. $$ Consequently, for any closed set $\A\subset\H$ with $\nu(\A)=0$, we have $$\lim_{t\to\infty}\mathrm{P}_t\chi_{\A}(\y)=0,\ \text{ for all }\ \y\in\H.$$
			\item [(4)] \textbf{Uniqueness of invariant probability measure.} The Markovian transition semigroup $\mathrm{P}_t$ has at most one invariant probability measure. 
	\end{enumerate}
\end{corollary}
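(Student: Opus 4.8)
The four assertions are exactly the standard package of consequences that the abstract theory attaches to an asymptotic log-Harnack inequality, so the plan is to read off from \eqref{4.2} the two profile functions
$$\Phi(\x,\y)=\gamma e^{k\|\y\|_{\H}^2}\|\x-\y\|_{\H}^2, \qquad \Psi_t(\x,\y)=2e^{-\theta t}e^{k\|\y\|_{\H}^2}\|\x-\y\|_{\H},$$
note that both are measurable and that $\Psi_t\downarrow 0$ as $t\uparrow\infty$ (since $\theta=\tfrac12(\mu\lambda_{N_0}-k\Tr(\sigma\sigma^*))>0$ under $\lambda_1\mu^3\ge 8\Tr(\sigma\sigma^*)$, which forces $k\Tr(\sigma\sigma^*)<\mu\lambda_{N_0}$), and then invoke Theorem 2.1 of \cite{JBFY} verbatim, exactly as in Corollary 3.1 of \cite{WHSL}. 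For completeness I would record the mechanism behind each item. The four statements are \emph{soft} given that the hard analytic work (establishing \eqref{4.2}) is already done in Theorem \ref{maint}.

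\emph{Gradient estimate} (1): I would apply \eqref{4.2} to the positive function $e^{\e f}$ for $f\in\mathrm{Lip}_b(\H)$ and $\e>0$; since $\|\nabla\log e^{\e f}\|_{\infty}=\e\|\nabla f\|_{\infty}$ this gives
$$\e\,\mathrm{P}_tf(\y)\le \log\mathrm{P}_te^{\e f}(\x)+\Phi(\x,\y)+\e\,\Psi_t(\x,\y)\|\nabla f\|_{\infty}.$$
Using the cumulant expansion $\log\mathrm{P}_te^{\e f}(\x)=\e\,\mathrm{P}_tf(\x)+\tfrac{\e^2}{2}(\mathrm{P}_tf^2(\x)-(\mathrm{P}_tf)^2(\x))+o(\e^2)$, subtracting $\e\,\mathrm{P}_tf(\x)$, dividing by $\e$ and minimising the resulting $\tfrac{\e}{2}(\mathrm{P}_tf^2-(\mathrm{P}_tf)^2)+\Phi/\e$ over $\e>0$ produces the summand $\|\x-\y\|_{\H}\sqrt{2\gamma e^{k\|\y\|_{\H}^2}(\mathrm{P}_tf^2-(\mathrm{P}_tf)^2)}$; dividing by $\|\x-\y\|_{\H}$ and letting $\x\to\y$ yields the stated estimate (the optimal $\e\sim\|\x-\y\|_{\H}\to0$ kills the higher cumulants, while the $\Psi_t$-term contributes $2e^{-\theta t+k\|\y\|_{\H}^2}\|\nabla f\|_{\infty}$). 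Because this Lipschitz coefficient tends to $0$, the estimate furnishes the asymptotically strong Feller property in the sense of the definition recalled before the statement.

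\emph{Irreducibility} (2) and \emph{heat kernel} (3): For (2), given $\A$ with $\delta(\y,\A)>0$ and $\e>0$, I pick a Lipschitz $h$ with $\chi_{\A}\le h\le\chi_{\A_{\e}}$ and $\|\nabla h\|_{\infty}\le 2/\e$, apply \eqref{4.2} to $f=h+\eta$ with $\eta\in(0,1)$, bound the left side below by $\log\eta+\mathrm{P}_t(\y,\A)\log\tfrac{1+\eta}{\eta}$ and the right side above by $\log(\mathrm{P}_t(\x,\A_{\e})+\eta)+\Phi(\x,\y)+\tfrac{2}{\e\eta}\Psi_t(\x,\y)$; letting $t\to\infty$ (so $\Psi_t\to0$) and then choosing $\eta$ small enough that $\delta(\y,\A)\log\tfrac{1+\eta}{\eta}>\Phi(\x,\y)$ gives $\liminf_{t\to\infty}\mathrm{P}_t(\x,\A_{\e})>0$, and retaining the $\Psi_t$-term quantitatively yields the sharpened conclusion under $2e^{-\theta t+k\|\y\|_{\H}^2}\|\x-\y\|_{\H}<\e\e_0$. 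For (3), I apply \eqref{4.2} to $e^{f}$ to get $e^{\mathrm{P}_tf(\y)}\le\mathrm{P}_te^{f}(\x)\,e^{\Phi(\x,\y)+\Psi_t(\x,\y)\|\nabla f\|_{\infty}}$, rearrange, integrate in $\x$ against the invariant measure $\nu$, use $\int_{\H}\mathrm{P}_te^f(\x)\,\nu(\d\x)=\nu(e^f)$ and $\Psi_t\to0$; since $e^{-\Phi(\x,\y)}\in(0,1]$ the denominator $\int_{\H}e^{-\Phi(\x,\y)}\nu(\d\x)$ is strictly positive, giving the claimed bound. The vanishing of $\lim_{t\to\infty}\mathrm{P}_t\chi_{\A}(\y)$ for closed $\nu$-null $\A$ then follows by applying this bound to $c\,g_m$ with $g_m$ Lipschitz, $\chi_{\A}\le g_m\le\chi_{\A_{1/m}}$, using $\nu(\A_{1/m})\downarrow\nu(\A)=0$ and letting $m\to\infty$ then $c\to\infty$.

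\emph{Uniqueness} (4) and the main obstacle: Uniqueness follows by combining (1) and (2): the asymptotic irreducibility forces every invariant probability measure to have full support (hence a common point in all supports), and the abstract uniqueness criterion of \cite{JBFY} for asymptotically strong Feller semigroups—resting on the Hairer–Mattingly framework \cite{MHJC}—then permits at most one invariant measure. The step I expect to be most delicate is the passage to the limit in (1): one must justify simultaneously sending $\e\to0$ and $\x\to\y$, with $\e$ tied to $\|\x-\y\|_{\H}$, while controlling the $o(\e^2)$ cumulant remainder uniformly, and confirm that the weight $e^{k\|\y\|_{\H}^2}$—being a fixed finite factor for fixed $\y$—causes no loss in either the optimisation or the subsequent $\limsup_{t\to\infty}$.
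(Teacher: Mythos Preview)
Your strategy is the same as the paper's: identify the profile functions $\Phi$ and $\Psi_t$ in \eqref{4.2} and invoke Theorem~2.1 of \cite{JBFY}. The paper does not reproduce the mechanism behind (1)--(4) as you do; it simply cites \cite{JBFY} and \cite{WHSL} and moves on, so your detailed sketch of the cumulant/optimisation argument for (1), the Lipschitz cut-off for (2), and the integration against $\nu$ for (3) is extra exposition rather than a different route.

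Where the paper's proof and yours genuinely diverge is in what each chooses to supply beyond the citation. The paper devotes the bulk of its proof of Corollary~\ref{cor4.4} to establishing the \emph{existence} of an invariant probability measure, via the Krylov--Bogoliubov scheme: it shows the Feller property from the pathwise estimate
\[
\|\u(t,\x)-\u(t,\y)\|_{\H}^2\le\|\x-\y\|_{\H}^2\exp\!\Big(\tfrac{2}{\mu}\int_0^t\|\u(s,\x)\|_{\V}^2\,\d s\Big),
\]
and tightness of the time-averaged laws $\nu_n=\tfrac1n\int_0^n\delta_{\mathbf 0}\mathrm P_t\,\d t$ from the energy bound $\nu_n(\|\cdot\|_{\V}^2)=\Tr(\sigma\sigma^*)/(2\mu)$ together with the compact embedding $\V\hookrightarrow\H$. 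You omit this entirely. Strictly speaking the corollary as worded does not assert existence---(3) is conditional and (4) says ``at most one''---so your argument is not incorrect; but the paper's author evidently regards existence as part of the package (the proof opens with ``provided we prove the existence of an invariant measure''), and without it (3) and (4) are potentially vacuous. If you want your write-up to match the paper's scope, you should add the Krylov--Bogoliubov step.

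One minor point: the paper records that Theorem~2.1 of \cite{JBFY} asks for $\Theta,\Psi_t$ to be symmetric, yet the profiles here carry the asymmetric weight $e^{k\|\y\|_{\H}^2}$. Neither you nor the paper addresses this; it does not affect the argument (the relevant estimates in \cite{JBFY} only use one direction), but it is worth being aware of.
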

As discussed in \cite{WHSL}, we prove Theorem \ref{maint} and Corollary \ref{cor4.4} via coupling method by change of measures in infinite dimensional spaces. 
Given any $\y\in\H$, let us construct an auxiliary process $\v(\cdot)$ by the following equation:
\begin{equation}\label{4p4}
\left\{
\begin{aligned}
\d\v(t)+[\mu\A\v(t)+\B(\v(t))+\beta\mathcal{C}(\v(t))]\d t&=\sigma\d\W(t)+\frac{\mu\lambda_{N_0}}{2}(\u(t)-\v(t))^l\d t, \\
\v(0)&=\y. 
\end{aligned}\right.
\end{equation}
The system \eqref{4p4} can be interpreted as the system \eqref{32} with an additional control term. Or in other words, one can treat the system \eqref{4p4} as an analogue of the  system \eqref{32} with the operator $\A$ replaced by $\widetilde{\A}:=\A+\frac{\mu\lambda_{N_0}}{2}\mathrm{P}_{N_0}$, and an additional term $-\frac{\mu\lambda_{N_0}}{2}\mathrm{P}_{N_0}\u$. Using similar arguments as in Remark 8, \cite{AKMS}, we can use  Girsanov's theorem to prove that the strong solution of the system \eqref{4p4} is uniquely defined. Furthermore, one can write the system \eqref{4p4} in the form \eqref{32} with $\x$ changed to $\y$ and $\W(t)$ changed to $$\wi\W(t):=\W(t)+\int_0^th(s)\d s,\ h(t):=\frac{\mu\lambda_{N_0}}{2}\sigma^{-1}(\u(t)-\v(t))^l, \ t\geq 0,$$ where $\sigma$ is defined in the Assumption \ref{ass4.1}. Let us now define 
\begin{align}
\Phi(t):=\exp\left\{-\int_0^t(h(s),\d \W(s))-\frac{1}{2}\int_0^t\|h(s)\|_{\H}^2\d s\right\}. 
\end{align}
Using Girsanov's theorem, we conclude that $\{\wi\W(t)\}_{t\geq 0}$ is a Wiener process on $\H$ under the weighted probability measure $\widetilde{\mathbb{P}}$ and is uniquely defined by 
\begin{align}
\frac{\d\widetilde{\mathbb{P}}}{\d\mathbb{P}}\Big|_{\mathscr{F}_t}:=\Phi(t),\  t\geq 0. 
\end{align}
Next, we rewrite the systems \eqref{32} and \eqref{4p4} as 
\begin{equation}\label{4.7}
\left\{
\begin{aligned}
\d\u(t)+[\mu \A\u(t)+\B(\u(t))+\beta\mathcal{C}(\u(t))]\d t&=\sigma\d\W(t), \\
\d\v(t)+[\mu\A\v(t)+\B(\v(t))+\beta\mathcal{C}(\v(t))]\d t&=\sigma\d\W(t)+\frac{\mu\lambda_{N_0}}{2}(\u(t)-\v(t))^l\d t,
\end{aligned}
\right. 
\end{equation}
with initial values $\u(0)=\x$ and $\v(0)=\y$, respectively. With the above setting, we have the following estimate:
\begin{lemma}\label{lem4.6}
For any $k\leq\frac{\lambda_1\mu}{4\Tr(\sigma\sigma^*)}$, we have 
	\begin{align}\label{4.8}
&	\E\left\{\exp\left[k\sup_{t\geq 0}\left(\|\u(t)\|_{\H}^2+\mu\int_0^t\|\u(s)\|_{\V}^2\d s+2\beta\int_0^t\|\u(s)\|_{\wi\L^{r+1}}^{r+1}\d s-\Tr(\sigma\sigma^*)t\right)\right]\right\}\nonumber\\&\quad\leq 2\exp\left(k\|\x\|_{\H}^2\right),
\end{align} and \begin{align}
&	\E_{\widetilde{\mathbb{P}}}\left\{\exp\left[k\sup_{t\geq 0}\left(\|\v(t)\|_{\H}^2+\mu\int_0^t\|\v(s)\|_{\V}^2\d s+2\beta\int_0^t\|\v(s)\|_{\wi\L^{r+1}}^{r+1}\d s-\Tr(\sigma\sigma^*)t\right)\right]\right\}\nonumber\\&\quad\leq 2\exp\left(k\|\y\|_{\H}^2\right).\label{4.9}
	\end{align}
\end{lemma}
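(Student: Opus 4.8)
The plan is to deduce both \eqref{4.8} and \eqref{4.9} from the It\^o energy equality satisfied by the strong solution, combined with a (doubled) exponential--supermartingale estimate. Since, after the Girsanov change of measure already set up above, the auxiliary process $\v$ solves under $\widetilde{\mathbb P}$ exactly the equation $\d\v(t)+[\mu\A\v(t)+\B(\v(t))+\beta\mathcal C(\v(t))]\d t=\sigma\d\widetilde\W(t)$ with $\v(0)=\y$, where $\widetilde\W$ is a $\widetilde{\mathbb P}$-cylindrical Wiener process on $\H$, the estimate \eqref{4.9} is obtained word for word from the proof of \eqref{4.8} with $(\u,\x,\W,\mathbb P,\E)$ replaced by $(\v,\y,\widetilde\W,\widetilde{\mathbb P},\E_{\widetilde{\mathbb P}})$. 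So I only describe the argument for $\u$.

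First I would record, from condition (iii) of the definition of strong solution (equivalently Theorem \ref{exis}), that with $M(t):=2\int_0^t(\u(s),\sigma\d\W(s))$, a continuous local martingale satisfying $\d\langle M\rangle_t=4\|\sigma^*\u(t)\|_{\H}^2\d t\leq 4\Tr(\sigma\sigma^*)\|\u(t)\|_{\H}^2\d t$, the quantity to be controlled equals
$$Y(t):=\|\u(t)\|_{\H}^2+\mu\int_0^t\|\u(s)\|_{\V}^2\d s+2\beta\int_0^t\|\u(s)\|_{\wi\L^{r+1}}^{r+1}\d s-\Tr(\sigma\sigma^*)t=\|\x\|_{\H}^2-\mu\int_0^t\|\u(s)\|_{\V}^2\d s+M(t).$$
The key step is to compare $M$ with the \emph{doubled} exponential martingale $\widetilde Z(t):=\exp\left(2kM(t)-2k^2\langle M\rangle_t\right)$, a nonnegative continuous local martingale, hence a supermartingale, with $\widetilde Z(0)=1$. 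Writing $2kM(t)=\log\widetilde Z(t)+2k^2\langle M\rangle_t$, using the above bound on $\langle M\rangle_t$ and the Poincar\'e inequality \eqref{poin} ($\|\u\|_{\V}^2\geq\lambda_1\|\u\|_{\H}^2$), one gets pathwise and for every $t\geq0$,
$$kY(t)\leq k\|\x\|_{\H}^2+\tfrac12\log\widetilde Z(t)+\left(4k^2\Tr(\sigma\sigma^*)-k\mu\lambda_1\right)\int_0^t\|\u(s)\|_{\H}^2\d s.$$
Since $k\leq\frac{\lambda_1\mu}{4\Tr(\sigma\sigma^*)}$ makes the last coefficient nonpositive, this yields $\sup_{t\geq0}e^{kY(t)}\leq e^{k\|\x\|_{\H}^2}\big(\sup_{t\geq0}\widetilde Z(t)\big)^{1/2}$. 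Finally, Doob's maximal inequality for the nonnegative supermartingale $\widetilde Z$ gives $\mathbb P(\sup_{t\geq0}\widetilde Z(t)\geq\lambda)\leq 1\wedge\lambda^{-1}$, so that $\E\big[(\sup_{t\geq0}\widetilde Z(t))^{1/2}\big]=\int_0^\infty\mathbb P(\sup_{t\geq0}\widetilde Z(t)\geq s^2)\d s\leq\int_0^\infty(1\wedge s^{-2})\d s=2$, and \eqref{4.8} follows.

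The main obstacle, beyond bookkeeping, is making the exponential--supermartingale step rigorous, since $M$ is only a local martingale and $\langle M\rangle_\infty$ need not be finite: I would first run the computation on the stochastic intervals $[0,\tau_N]$ with $\tau_N:=\inf\{t\geq0:\|\u(t)\|_{\H}^2+\int_0^t\|\u(s)\|_{\V}^2\d s\geq N\}$, on which $\widetilde Z$ is a genuine martingale, obtaining the bound with $\sup_{t\in[0,\tau_N]}$, and then let $N\to\infty$ using $\tau_N\uparrow\infty$ a.s.\ (a consequence of the $\mathbb P$-a.s.\ local-in-time regularity in Theorem \ref{exis}) together with monotone convergence. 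A secondary point for \eqref{4.9} is that the energy equality for $\v$ and the Girsanov transformation must hold under $\widetilde{\mathbb P}$; since $\widetilde{\mathbb P}\sim\mathbb P$ on each $\mathscr F_t$ and $\widetilde\W$ is a $\widetilde{\mathbb P}$-cylindrical Wiener process, this reduces to Theorem \ref{exis} applied on the stochastic basis $(\Omega,\mathscr F,\{\mathscr F_t\}_{t\geq0},\widetilde{\mathbb P},\widetilde\W)$.
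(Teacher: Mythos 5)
Your proof is correct and follows essentially the same route as the paper: the It\^o energy equality, Poincar\'e's inequality to absorb the quadratic variation of the martingale into the dissipative term $\mu\int_0^t\|\u(s)\|_{\V}^2\d s$, and an exponential-supermartingale/maximal-inequality argument producing the constant $2$, with \eqref{4.9} obtained by repeating the argument for $\v$ under $\widetilde{\mathbb{P}}$. The only difference is bookkeeping: the paper works with $\mathcal{M}_{k_0}=\M-\frac{k_0}{2}\langle\M\rangle$ at $k_0=\frac{\lambda_1\mu}{2\Tr(\sigma\sigma^*)}$ and integrates the tail bound $\mathbb{P}\left(\sup_{t\geq0}\mathcal{M}_{k_0}(t)\geq y\right)\leq e^{-k_0y}$ to get $\E\left[e^{k\sup_{t\geq 0}\mathcal{M}_{k_0}(t)}\right]\leq 2$ for $k\leq k_0/2$, whereas you use the exponential supermartingale at level $2k$ and bound $\E\left[\left(\sup_{t\geq0}\widetilde{Z}(t)\right)^{1/2}\right]\leq 2$ via Doob's inequality and the layer-cake formula -- equivalent variants of the same idea.
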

\begin{proof}
	Let us define 
	\begin{align}
	\M(t):=2\int_0^t(\sigma\d\W(s),\u(s)). 
	\end{align}
	Then, it can be easily seen that $\M(t)$ is a continuous martingale and a bound for  its quadratic variation process is given by 
	\begin{align}\label{4.11}
	\langle\M\rangle(t)&=4\int_0^t\Tr(\sigma\sigma^*\u(s)\otimes\u(s))\d s\leq 4\Tr(\sigma\sigma^*)\int_0^t\|\u(s)\|_{\H}^2\d s\nonumber\\&\leq\frac{4}{\lambda_1}\Tr(\sigma\sigma^*)\int_0^t\|\u(s)\|_{\V}^2\d s,
	\end{align}
	where we used Poincar\'e's inequality. Let us set $k_0:=\frac{\lambda_1\mu}{2\Tr(\sigma\sigma^*)}$ and $\mathcal{M}_{k_0}(t):=\M(t)-\frac{k_0}{2}\langle\M\rangle(t)$. An application of the infinite dimensional It\^o formula to the process $\|\u(\cdot)\|_{\H}^2$ (see \cite{MTM8}) yields 
	\begin{align}\label{4.12}
&	\|\u(t)\|_{\H}^2+2\mu\int_0^t\|\u(s)\|_{\V}^2\d s+2\beta\int_0^t\|\u(s)\|_{\wi\L^{r+1}}^{r+1}\d s\nonumber\\&=\|\x\|_{\H}^2+\Tr(\sigma\sigma^*)t+2\int_0^t(\u(s),\sigma\d\W(s)), 
	\end{align}
	for all $t\in[0,T]$, $\mathbb{P}$-a.s. From \eqref{4.12}, we infer that 
	\begin{align}\label{4.13}
	&	\|\u(t)\|_{\H}^2+\mu\int_0^t\|\u(s)\|_{\V}^2\d s+2\beta\int_0^t\|\u(s)\|_{\wi\L^{r+1}}^{r+1}\d s\nonumber\\&=\|\x\|_{\H}^2-\mu\int_0^t\|\u(s)\|_{\V}^2\d s+\Tr(\sigma\sigma^*)t+\M(t)\nonumber\\&=\|\x\|_{\H}^2+\Tr(\sigma\sigma^*)t+\mathcal{M}_{k_0}(t)+\frac{k_0}{2}\langle\M\rangle(t)-\mu\int_0^t\|\u(s)\|_{\V}^2\d s\nonumber\\&\leq \|\x\|_{\H}^2+\Tr(\sigma\sigma^*)t+\mathcal{M}_{k_0}(t)+\left(\frac{2k_0}{\lambda_1}\Tr(\sigma\sigma^*)-\mu\right)\int_0^t\|\u(s)\|_{\V}^2\d s\nonumber\\&\leq \|\x\|_{\H}^2+\Tr(\sigma\sigma^*)t+\mathcal{M}_{k_0}(t),
	\end{align}
	where we used \eqref{4.11}. Note that $e^{k_0\mathcal{M}_{k_0}(\cdot)}$ is a supermartingale and using the maximal supermartingale inequality, we get 
	\begin{align}\label{4.14}
	\mathbb{P}\left\{\sup_{t\geq 0}\mathcal{M}_{k_0}(t)\geq y \right\}=\mathbb{P}\left\{\sup_{t\geq 0}\exp\left(k_0\mathcal{M}_{k_0}(t)\right)\geq e^{k_0y}\right\}\leq e^{-k_0y}\E\left\{\exp\left(k_0\mathcal{M}_{k_0}(T)\right)\right\}=e^{-k_0y}.
	\end{align}
	For $k\leq\frac{k_0}{2}$, the above inequality implies  
	\begin{align}\label{4.15}
	\E\left\{\exp\left(k\sup_{t\geq 0}\mathcal{M}_{k_0}(t)\right)\right\}=1+k\int_0^{\infty}e^{ky}\mathbb{P}\left\{\sup_{t\geq 0}\mathcal{M}_{k_0}(t)\geq y\right\}\d y\leq 2,
	\end{align}
	using \eqref{4.14}. Thus, from \eqref{4.13}, it is immediate that 
	\begin{align}
	&	\E\left\{\exp\left[k\sup_{t\geq 0}\left(\|\u(t)\|_{\H}^2+\mu\int_0^t\|\u(s)\|_{\V}^2\d s+2\beta\int_0^t\|\u(s)\|_{\wi\L^{r+1}}^{r+1}\d s\right)\right]\right\}\nonumber\\&\leq \exp\left(k\|\x\|_{\H}^2\right)\exp\left(k\sup_{t\geq 0}\Tr(\sigma\sigma^*)t\right)\E\left\{\exp\left(k\sup_{t\geq 0}\mathcal{M}_{k_0}(t)\right)\right\},
	\end{align}
	and the inequality \eqref{4.8} follows by using \eqref{4.15}. The inequality \eqref{4.9} can be obtained in a similar way. 
\end{proof}
\begin{lemma}\label{lem4p6}
Let us 	assume that $\lambda_1\mu^3\geq 8\Tr(\sigma\sigma^*)$. Then, there exists a constant $k\leq \frac{\lambda_1\mu}{4\Tr(\sigma\sigma^*)}$ such that the asymptotic coupling $(\u(\cdot),\v(\cdot))$ given in \eqref{4.7} satisfies: 
\begin{align}\label{4.18}
\mathbb{E}_{\widetilde{\mathbb{P}}}\left[\|\u(t)-\v(t)\|_{\H}^2\right]\leq 2e^{-\theta t}e^{k\|\y\|_{\H}^2}\|\x-\y\|_{\H}^2, \ t\geq 0 ,
\end{align}
where $\theta=\frac{1}{2}(\mu\lambda_{N_0}-k\Tr(\sigma\sigma^*))$. 
\end{lemma}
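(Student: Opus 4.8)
\emph{Plan.} Set $\w(t):=\u(t)-\v(t)$. Subtracting the two equations in \eqref{4.7} the terms $\sigma\d\W(t)$ cancel, so $\w(\cdot)$ solves the random but \emph{noise-free} evolution equation
\begin{align*}
\d\w(t)+\big[\mu\A\w(t)+\B(\u(t))-\B(\v(t))+\beta(\mathcal{C}(\u(t))-\mathcal{C}(\v(t)))\big]\d t=-\frac{\mu\lambda_{N_0}}{2}\w^l(t)\d t,
\end{align*}
with $\w(0)=\x-\y$. Since $\Phi(t)>0$ a.s., the measures $\widetilde{\mathbb{P}}$ and $\mathbb{P}$ are equivalent on each $\mathscr{F}_t$, so every $\mathbb{P}$-a.s. pathwise bound below holds $\widetilde{\mathbb{P}}$-a.s. as well; recall also that under $\widetilde{\mathbb{P}}$ the process $\v$ has the law of the unique strong solution of \eqref{32} started from $\y$, so Lemma \ref{lem4.6} applies to $\v$ under $\widetilde{\mathbb{P}}$. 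First I would record the energy identity obtained by pairing the equation for $\w$ with $\w(\cdot)$ (legitimate since $\u,\v\in\C([0,T];\H)\cap\mathrm{L}^2(0,T;\V)$ by Theorem \ref{exis} and, for $n=2$, $\V\cap\wi\L^{r+1}=\V$), using $\langle\w^l,\w\rangle=\|\w^l\|_{\H}^2$ because $\mathrm{P}_{N_0}$ is an orthogonal projection:
\begin{align*}
\frac12\frac{\d}{\d t}\|\w(t)\|_{\H}^2+\mu\|\w(t)\|_{\V}^2+\frac{\mu\lambda_{N_0}}{2}\|\w^l(t)\|_{\H}^2+\beta\langle\mathcal{C}(\u(t))-\mathcal{C}(\v(t)),\w(t)\rangle=-\langle\B(\u(t))-\B(\v(t)),\w(t)\rangle.
\end{align*}

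Next I would dispose of the nonlinear terms. By \eqref{2.23} the $\mathcal{C}$-difference pairing is nonnegative, hence it may be dropped. For the convective term, writing $\B(\u)-\B(\v)=\B(\u,\w)+\B(\w,\v)$ and using $b(\u,\w,\w)=0$ from \eqref{b0} gives $\langle\B(\u)-\B(\v),\w\rangle=b(\w,\v,\w)$ --- the key choice being to let the inhomogeneity fall on $\v$, not $\u$, since it is $\v$ whose moments are controlled under $\widetilde{\mathbb{P}}$. In two dimensions, H\"older's and Ladyzhenskaya's inequalities (as in the bound $\|\u\|_{\wi\L^4}^2\le\sqrt2\|\u\|_{\H}\|\u\|_{\V}$ used earlier) yield $|b(\w,\v,\w)|\le\|\w\|_{\wi\L^4}^2\|\v\|_{\V}\le\sqrt2\|\w\|_{\H}\|\w\|_{\V}\|\v\|_{\V}$, and then Young's inequality gives $|b(\w,\v,\w)|\le\frac{\mu}{2}\|\w\|_{\V}^2+\frac1{\mu}\|\v\|_{\V}^2\|\w\|_{\H}^2$. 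Substituting, absorbing $\frac\mu2\|\w\|_{\V}^2$, and applying the spectral-gap inequality \eqref{4.3} in the form $\frac\mu2\|\w\|_{\V}^2+\frac{\mu\lambda_{N_0}}{2}\|\w^l\|_{\H}^2\ge\frac{\mu\lambda_{N_0}}{2}\|\w^h\|_{\H}^2+\frac{\mu\lambda_{N_0}}{2}\|\w^l\|_{\H}^2=\frac{\mu\lambda_{N_0}}{2}\|\w\|_{\H}^2$, I obtain
\begin{align*}
\frac{\d}{\d t}\|\w(t)\|_{\H}^2+\mu\lambda_{N_0}\|\w(t)\|_{\H}^2\le\frac2{\mu}\|\v(t)\|_{\V}^2\|\w(t)\|_{\H}^2,
\end{align*}
so that Gronwall's lemma gives, $\widetilde{\mathbb{P}}$-a.s., $\|\w(t)\|_{\H}^2\le\|\x-\y\|_{\H}^2\exp\!\big(-\mu\lambda_{N_0}t+\tfrac2{\mu}\int_0^t\|\v(s)\|_{\V}^2\d s\big)$.

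Finally I would take $\E_{\widetilde{\mathbb{P}}}$ and invoke \eqref{4.9} of Lemma \ref{lem4.6} with the specific choice $k=\tfrac2{\mu^2}$: dropping the nonnegative terms $\|\v(t)\|_{\H}^2$ and $2\beta\int_0^t\|\v(s)\|_{\wi\L^{r+1}}^{r+1}\d s$ inside the supremum, \eqref{4.9} yields $\E_{\widetilde{\mathbb{P}}}\big[\exp\!\big(\tfrac2{\mu}\int_0^t\|\v(s)\|_{\V}^2\d s\big)\big]\le 2e^{k\|\y\|_{\H}^2}e^{k\Tr(\sigma\sigma^*)t}$, whence $\E_{\widetilde{\mathbb{P}}}[\|\w(t)\|_{\H}^2]\le 2e^{k\|\y\|_{\H}^2}\|\x-\y\|_{\H}^2\,e^{-(\mu\lambda_{N_0}-k\Tr(\sigma\sigma^*))t}=2e^{k\|\y\|_{\H}^2}\|\x-\y\|_{\H}^2\,e^{-2\theta t}\le 2e^{k\|\y\|_{\H}^2}\|\x-\y\|_{\H}^2\,e^{-\theta t}$, which is \eqref{4.18}. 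The hypothesis $\lambda_1\mu^3\ge8\Tr(\sigma\sigma^*)$ is exactly what makes $k=2/\mu^2$ admissible in Lemma \ref{lem4.6}, i.e. $k\le\frac{\lambda_1\mu}{4\Tr(\sigma\sigma^*)}$, and --- together with $\lambda_{N_0}\ge\lambda_1$ --- guarantees $k\Tr(\sigma\sigma^*)\le\tfrac14\mu\lambda_{N_0}$, so that $\theta>0$ and the bound genuinely decays.

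\emph{Main obstacle.} The PDE estimates are the routine two-dimensional ones (Ladyzhenskaya plus Young, together with the low/high frequency split) and present no real difficulty; the delicate points are (i) recognizing that the difference process is genuinely noise-free, so a pathwise Gronwall argument is legitimate $\widetilde{\mathbb{P}}$-a.s. despite working under the changed measure, and (ii) arranging the nonlinear estimate so that the exponential moment it requires of $\v$ is \emph{precisely} the one produced by Lemma \ref{lem4.6} --- this constant matching is what pins down the choice $k=2/\mu^2$ and forces the structural condition $\lambda_1\mu^3\ge8\Tr(\sigma\sigma^*)$.
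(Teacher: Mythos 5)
Your proposal is correct and follows essentially the same route as the paper: the noise-free equation for $\w=\u-\v$, the 2D Ladyzhenskaya--Young bound with the inhomogeneity placed on $\v$, dropping the monotone $\mathcal{C}$-term, the low/high frequency estimate \eqref{4.3} yielding \eqref{4.25}, a pathwise Gronwall bound, and then the exponential moment \eqref{4.9} of Lemma \ref{lem4.6} applied to $\v$ under $\widetilde{\mathbb{P}}$. Your only departures are cosmetic improvements: you make the choice $k=2/\mu^2$ explicit (which is exactly where the hypothesis $\lambda_1\mu^3\geq 8\Tr(\sigma\sigma^*)$ enters) and you observe that $\lambda_{N_0}\geq\lambda_1$ already forces $\theta>0$, whereas the paper instead appeals to choosing $N_0$ large.
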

\begin{proof}
	Note that $\w(\cdot):=\u(\cdot)-\v(\cdot)$ satisfies the following It\^o stochastic differential: 
	\begin{equation}\label{4.19}
	\left\{
	\begin{aligned}
	\d\w(t)+[\mu\w(t)+\B(\u(t))-\B(\v(t))+\beta(\mathcal{C}(\u(t))-\mathcal{C}(\v(t)))]\d t&=-\frac{\mu\lambda_{N_0}}{2}(\w(t))^l\d t,\\
	\w(0)&=\x-\y.
	\end{aligned}
	\right. 
	\end{equation}
Taking inner product with $\w(\cdot)$ to the first equation in \eqref{4.19}, we find 
	\begin{align}\label{4.20}
&\frac{\d}{\d t}	\|\w(t)\|_{\H}^2+2\mu\|\w(t)\|_{\V}^2\nonumber\\&=-2\langle\B(\u(t))-\B(\v(t)),\w(t)\rangle-2\beta\langle\mathcal{C}(\u(t))-\mathcal{C}(\v(t)),\w(t)\rangle-\mu \lambda_{N_0}(\mathrm{P}_{N_0}\w(t),\w(t)), 
	\end{align}
	for a.e. $t\in[0,T]$. 
	Using Ladyzhenskaya's, H\"older's and Young's inequalities, we estimate $-2\langle\B(\u)-\B(\v),\w\rangle$ as 
	\begin{align}\label{4.22}
	-2\langle\B(\u)-\B(\v),\w\rangle&=-2\langle\B(\w,\v),\w\rangle \leq 2\|\v\|_{\V}\|\w\|_{\wi\L^4}^2\leq 2\sqrt{2}\|\v\|_{\V}\|\w\|_{\H}\|\w\|_{\V}\nonumber\\&\leq{\mu}\|\w\|_{\V}^2+\frac{2}{\mu}\|\v\|_{\V}^2\|\w\|_{\H}^2. 
	\end{align}
	Using \eqref{214}, we have 
	\begin{align}\label{4.21}
	-2\beta\langle\mathcal{C}(\u)-\mathcal{C}(\v),\w\rangle&\leq -\frac{\beta}{2^{r-2}}\|\w\|_{\wi\L^{r+1}}^{r+1}, \ \text{ for } \ r\geq 1. 
	\end{align}
	Substituting \eqref{4.22} and \eqref{4.21} in \eqref{4.20}, we obtain 
	\begin{align}\label{4.23}
&\frac{\d}{\d t}	\|\w(t)\|_{\H}^2+\frac{\beta}{2^{r-2}}\|\w(t)\|_{\wi\L^{r+1}}^{r+1}\nonumber\\&\leq\frac{2}{\mu}\|\v(t)\|_{\V}^2\|\w(t)\|_{\H}^2-\left(\mu\|\w(t)\|_{\V}^2+\mu\lambda_{N_0}\|\mathrm{P}_{N_0}\w(t)\|_{\H}^2\right). 
	\end{align}
	Using \eqref{4.3}, we estimate the final term from the right hand side of the inequality \eqref{4.23} as 
	\begin{align}\label{4.24} 
	\mu\|\w\|_{\V}^2+\mu\lambda_{N_0}\|\mathrm{P}_{N_0}\w\|_{\H}^2&\geq \mu\|(\mathrm{I}-\mathrm{P}_{N_0})\w\|_{\V}^2+\mu\lambda_{N_0}\|\mathrm{P}_{N_0}\w\|_{\H}^2\nonumber\\&\geq \mu\lambda_{N_0}\|(\mathrm{I}-\mathrm{P}_{N_0})\w\|_{\H}^2+\mu\lambda_{N_0}\|\mathrm{P}_{N_0}\w\|_{\H}^2\nonumber\\&=\mu\lambda_{N_0}\|\w\|_{\H}^2.
	\end{align}
	Applying \eqref{4.24} in \eqref{4.23}, we obtain 
		\begin{align}\label{4.25}
	&\frac{\d}{\d t}	\|\w(t)\|_{\H}^2+\mu\lambda_{N_0}\|\w(t)\|_{\H}^2\leq\frac{2}{\mu}\|\v(t)\|_{\V}^2\|\w(t)\|_{\H}^2 .
	\end{align}
	An application of Gronwall's inequality yields 
	\begin{align}
	\|\w(t)\|_{\H}^2\leq \|\w(0)\|_{\H}^2e^{-\mu\lambda_{N_0}t}\exp\left(\frac{2}{\mu}\int_0^t\|\v(s)\|_{\V}^2\d s\right), 
	\end{align}
	for all $t\in[0,T]$. Since $\lambda_1\mu^3\geq 8\Tr(\sigma\sigma^*)$, there exists a constant $k\leq \frac{\lambda_1\mu}{4\Tr(\sigma\sigma^*)}$ such that 
	\begin{align}
	\mathbb{E}_{\widetilde{\mathbb{P}}}\|\w(t)\|_{\H}^2&\leq\|\x-\y\|_{\H}^2e^{-\mu\lambda_{N_0}t}	\mathbb{E}_{\widetilde{\mathbb{P}}}\left[\exp\left(\frac{2}{\mu}\int_0^t\|\v(s)\|_{\V}^2\d s\right)\right]\nonumber\\&\leq 2\|\x-\y\|_{\H}^2e^{k\|\y\|_{\H}^2}\exp\left\{-\left(\mu\lambda_{N_0}-k\Tr(\sigma\sigma^*)\right)t\right\},
	\end{align}
	where we used Lemma \ref{lem4.6} (see \eqref{4.9}). 
	It should be noted that $\{\lambda_k\}_{k=1}^{\infty}$ is  an unbounded increasing sequence,  and thus we can choose $N_0$ large enough so that $\mu\lambda_{N_0}>k\Tr(\sigma\sigma^*)$. Hence, the estimate \eqref{4.18} follows. 
\end{proof}

In order to prove our main Theorem (see Theorem \ref{maint}), we need the following inequality (see Lemma 2.4, \cite{MAAT}). 
\begin{lemma}
	Let $f\geq 0$ with $\E[f]>0$. Then, for any measurable function $g$, we have 
	\begin{align}\label{4.42}
	\E[fg]\leq \E[f]\log\E[e^g]+\E[f\log f]-\E[f]\log\E[f]. 
	\end{align}
\end{lemma}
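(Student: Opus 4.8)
\emph{Proof proposal.} The plan is to deduce \eqref{4.42} from the elementary Fenchel--Young inequality for the convex conjugate pair $\phi(u)=u\log u-u$ on $[0,\infty)$ (with $0\log 0:=0$) and $\phi^{*}(v)=e^{v}$ on $\R$: for all $u\geq 0$ and $v\in\R$,
\[
uv\leq u\log u-u+e^{v},
\]
which follows at once by maximising the concave map $u\mapsto uv-u\log u+u$ (maximiser $u=e^{v}$, value $e^{v}$), and which is the only analytic ingredient.

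First I would discard the degenerate cases. Since $\E[f]>0$ and $x\log x\geq -e^{-1}$ on $[0,\infty)$, the expectation $\E[f\log f]\in(-\infty,+\infty]$ is well defined and, by Jensen's inequality, $\E[f\log f]\geq\E[f]\log\E[f]$; hence if $\E[e^{g}]=+\infty$ or $\E[f\log f]=+\infty$ the right-hand side of \eqref{4.42} equals $+\infty$ and the claim is trivial. So assume $\E[e^{g}]<\infty$ and $\E[f\log f]<\infty$, and note $\E[e^{g}]>0$ since $g$ is real-valued, so $\log\E[e^{g}]\in\R$.

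Next I would normalise: set $q:=f/\E[f]$, a probability density with $\E[q\log q]=\E[f\log f]/\E[f]-\log\E[f]<\infty$. Applying the pointwise inequality with $u=q(\omega)$ and $v=g(\omega)-\log\E[e^{g}]$ gives
\[
q\,g\leq q\log q-q+q\log\E[e^{g}]+\frac{e^{g}}{\E[e^{g}]},
\]
whose right-hand side has integrable positive part; taking expectations (using $\E[q]=1$) yields $\E[qg]\leq\E[q\log q]+\log\E[e^{g}]$. Multiplying by $\E[f]$ and substituting back $\E[q\log q]=\E[f\log f]/\E[f]-\log\E[f]$ and $\E[qg]=\E[fg]/\E[f]$ produces exactly
\[
\E[fg]\leq\E[f]\log\E[e^{g}]+\E[f\log f]-\E[f]\log\E[f],
\]
which is \eqref{4.42}.

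I do not expect a genuine obstacle here: the statement is precisely the Donsker--Varadhan variational formula $H(\mathbb{Q}\,|\,\mathbb{P})=\sup_{g}\bigl(\E_{\mathbb{Q}}[g]-\log\E_{\mathbb{P}}[e^{g}]\bigr)$ applied to $\d\mathbb{Q}=q\,\d\mathbb{P}$, and the only care required is the measure-theoretic bookkeeping (well-definedness of $\E[f\log f]$, and the harmless possibility $\E[qg]=-\infty$, in which case the inequality holds trivially); alternatively one may simply invoke the classical result as recorded in \cite{MAAT}.
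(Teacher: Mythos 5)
Your proposal is correct. Note that the paper itself gives no proof of this lemma: it is simply quoted as Lemma 2.4 of \cite{MAAT}, so there is no in-paper argument to compare against. Your derivation is the standard one for this entropy (Donsker--Varadhan/Young) inequality: the pointwise Fenchel--Young bound $uv\leq u\log u-u+e^{v}$, applied with $u=f/\E[f]$ and $v=g-\log\E[e^{g}]$, integrates precisely to the stated inequality after multiplying back by $\E[f]$, and your handling of the degenerate cases ($\E[e^{g}]=\infty$, $\E[f\log f]=\infty$, or $\E[fg]=-\infty$) is adequate, using that $x\log x\geq -e^{-1}$ makes $\E[f\log f]$ well defined and Jensen's inequality makes the last two terms on the right-hand side jointly nonnegative. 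The only implicit hypothesis you use is $\E[f]<\infty$, but this is already needed for the statement itself to make sense (and holds in the paper's application, where $f=\Phi(t)$ is a Girsanov density with $\E[f]=1$), so the argument is complete; it supplies a self-contained elementary proof where the paper relies on an external citation.
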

\begin{proof}[Proof of Theorem \ref{maint}]
Using the uniqueness of strong solution to the system \eqref{4p4}, we have 
\begin{align}
\mathrm{P}_t[f(\y)]=\E_{\widetilde{\mathbb{P}}}\left[f(\v(t,\y))\right], \ t\geq 0, \ \y\in\H,
\end{align}
for all $f\in\mathscr{B}_b(\H)$. Thus, for any $f\in\mathscr{B}_b^+(\H)$ with $\|\nabla\log f\|_{\infty}<\infty$, by using the definition of $\|\nabla\log f\|_{\infty}$, \eqref{4.42} and \eqref{4.18}, we get 
\begin{align}\label{4.44}
\mathrm{P}_t[\log f(\y)]&=\E_{\widetilde{\mathbb{P}}}\left[\log f(\v(t,\y))\right]\nonumber\\&= \E_{\widetilde{\mathbb{P}}}\left[\log f(\u(t,\x))\right]+\E_{\widetilde{\mathbb{P}}}\left[\log f(\v(t,\y))-\log f(\u(t,\x))\right]\nonumber\\&\leq \E\left[\Phi(t)\log f(\u(t,\x))\right]+\|\nabla\log f\|_{\infty}\E_{\widetilde{\mathbb{P}}}\left[\|\u(t,\x)-\v(t,\y)\|_{\H}\right]\nonumber\\&\leq \E\left[\Phi(t)\log f(\u(t,\x))\right]+\|\nabla\log f\|_{\infty}\E_{\widetilde{\mathbb{P}}}\left[\|\u(t,\x)-\v(t,\y)\|_{\H}^2\right]^{1/2}\nonumber\\&\leq\E\left[\Phi(t)\log\Phi(t)\right]+\log\mathrm{P}_tf(\x)+2e^{-\theta t}e^{k\|\y\|_{\H}^2}\|\x-\y\|_{\H}\|\nabla\log f\|_{\infty}.
\end{align}
Using Fubini's theorem, \eqref{4.1} and \eqref{4.18}, we estimate $\E\left[\Phi(t)\log\Phi(t)\right]$ as 
\begin{align}\label{4.45}
\E\left[\Phi(t)\log\Phi(t)\right]&=\E_{\widetilde{\mathbb{P}}}\left[\log\Phi(t)\right]=\frac{\mu^2\lambda_{N_0}^2}{8}\E_{\widetilde{\mathbb{P}}}\left[\int_0^t\|\sigma^{-1}(\u(s)-\v(s))^l\|_{\H}^2\d s\right]\nonumber\\&\leq\frac{C_{\sigma}^2\mu^2\lambda_{N_0}^2}{8}\int_0^t\E_{\widetilde{\mathbb{P}}}\left[\|\u(s)-\v(s)\|_{\H}^2\right]\d s\nonumber\\&\leq \frac{C_{\sigma}^2\mu^2\lambda_{N_0}^2}{4(\mu\lambda_{N_0}-k\Tr(\sigma\sigma^*))} e^{k\|\y\|_{\H}^2}\|\x-\y\|_{\H}^2. 
\end{align}
Hence \eqref{4.2} follows by applying \eqref{4.44} in \eqref{4.45}. 
\end{proof}

\begin{proof}[Proof of Corollary \ref{cor4.4}]
Let us define 
\begin{align*}\Theta(\x,\y)=\gamma e^{k\|\y\|_{\H}^2}\|\x-\y\|_{\H}^2\ \text{  and }\ \Psi_t(\x,\y)=2e^{-\theta t}e^{k\|\y\|_{\H}^2}\|\x-\y\|_{\H}. \end{align*}	From Theorem 2.1, \cite{JBFY}, we know that if $\mathrm{P}_t$ satisfies \eqref{4.2} and  $\Theta,\Psi_t:\H\times\H\to\R^+$ are symmetric with $\Psi_t\downarrow 0$ as $t\to\infty$, then we arrive at the assertions given in Corollary \ref{cor4.4}, provided we prove  the  existence of an invariant measure for  the transition semigroup $\{\mathrm{P}_t\}_{t\geq 0}$. We use the standard Krylov-Bogoliubov approach to obtain the existence of an invariant measure (see \cite{DaZ}).  Let us define $$\nu_n:=\frac{1}{n}\int_0^n\delta_\mathbf{0}\mathrm{P}_t\d t, \ n\geq 1,$$ where $\delta_\mathbf{0}$ is a Dirac measure at $\mathbf{0}\in\H$. It is clear that each $\mu_n$ is a probability measure. Let $\u(t,\x)$ be the unique strong solution of the system  \eqref{32} with the initial data $\x\in\H$. Then, a calculation similar to \eqref{4.23} yields
	\begin{align}\label{4.47}
&\frac{\d}{\d t}	\|\u(t,\x)-\u(t,\y)\|_{\H}^2+\mu\|\u(t,\x)-\u(t,\y)\|_{\V}^2+\frac{\beta}{2^{r-2}}\|\u(t,\x)-\u(t,\y)\|_{\wi\L^{r+1}}^{r+1}\nonumber\\&\leq\frac{2}{\mu}\|\u(t,\x)\|_{\V}^2\|\u(t,\x)-\u(t,\y)\|_{\H}^2. 
\end{align}
An application of Gronwall's inequality in \eqref{4.47} yields 
\begin{align}\label{4.48}
\|\u(t,\x)-\u(t,\y)\|_{\H}^2\leq\|\x-\y\|_{\H}^2\exp\left(\frac{2}{\mu}\int_0^t\|\u(s,\x)\|_{\V}^2\d s\right). 
\end{align}
From Theorem \ref{exis}, we infer that $\E\left(\int_0^t\|\u(s,\x)\|_{\V}^2\d s\right)<\infty,$ which together with \eqref{4.48} implies that the transition semigroup $\{\mathrm{P}_t\}_{t\geq 0}$ is a Feller semigroup. Thus, in order to prove the existence of an invariant measure for $\{\mathrm{P}_t\}_{t\geq 0}$, we only need to prove the tightness of the measure $\{\nu_n:n\geq 1\}$. Applying the infinite dimensional It\^o formula to the process $\|\u(\cdot,\x)\|_{\H}^2$ and then taking expectation, we obtain 
\begin{align}
\E\left[\|\u(t)\|_{\H}^2+2\mu\int_0^t\|\u(s)\|_{\V}^2\d s+2\beta\int_0^t\|\u(s)\|_{\wi\L^{r+1}}^{r+1}\d s\right]=\|\x\|_{\H}^2+\Tr(\sigma\sigma^*)t,
\end{align}
for all $t\in[0,T]$. Thus, it is immediate that 
\begin{align}\label{4.50}
\nu_n(\|\cdot\|_{\V}^2)=\frac{1}{n}\int_0^n\E\left[\|\u(t,\mathbf{0})\|_{\V}^2\right]\d t=\frac{\Tr(\sigma\sigma^*)}{2\mu}. 
\end{align}
Note also that $\nu_n(\|\cdot\|_{\wi\L^{r+1}}^{r+1})=\frac{\Tr(\sigma\sigma^*)}{2\beta}. $
We know that the embedding of $\V\subset\H$ is compact and hence for any $K\in(0,\infty)$, the set $\{\x\in\H:\|\x\|_{\V}\leq K\}$ is relatively compact in $\H$. Thus, the inequality \eqref{4.50} implies that  $\{\mu_n\}_{n\geq 1}$ is tight. Hence, using Prohorov's theorem there exists a probability measure $\nu$ and a subsequence $\{\nu_{n_k}\}_{k\in\mathbb{N}}$ such that $\nu_{n_k} \xrightarrow{w}\nu$, as $k\to\infty$. One can easily show that $\nu$ is an invariant probability measure for $\{\mathrm{P}_t\}_{t\geq 0}$. Thus, by the Krylov-Bogoliubov theorem (or by a  result of Chow and Khasminskii see \cite{CHKH}),  $\nu$ results to be an invariant measure for the transition semigroup $\{\mathrm{P}_t\}_{t\geq 0}$,  defined by 	$\mathrm{P}_t\varphi(\x)=\E\left[\varphi(\u(t,\x))\right],$ for all $\varphi\in\C_b(\H)$, where $\u(\cdot)$ is the unique strong solution of the system (\ref{32}) with the initial condition $\x\in\H$.
\end{proof}

\subsection{The cases $n=2,3$ and $r\in(3,\infty)$} 
In this subsection, we state and prove the results analogous to Theorem \ref{maint} and Corollary \ref{cor4.4} for the cases $n=2,3$ and $r\in(3,\infty)$. The main results of this subsection are the following: 

\begin{theorem}\label{maint1}
	Let $n=2,3$, $r\in(3,\infty)$ and $\u(t,\x)$ be the unique strong solution to the system \eqref{32}	with the initial data $\x\in\H$. Suppose that the noise co-efficient $\sigma$ satisfies the Assumption \ref{ass4.1}.  Then, for any $\x,\y\in\H$ and for any $f\in\mathscr{B}^+_b(\H)$ with $\|\nabla\log f\|_{\infty}<\infty$, we have the following \emph{asymptotic log-Harnack inequality}:
	\begin{align}\label{4p2}
	\mathrm{P}_t\log f(\y)&\leq \log\mathrm{P}_tf(\x)+\widetilde{\gamma}\|\x-\y\|_{\H}^2+e^{-\wi{\theta}t}\|\x-\y\|_{\H}\|\nabla\log f\|_{\infty}, \ t >0, 
	\end{align}
	where the constants $\wi{\theta}=\frac{1}{2}\left(\mu\lambda_{N_0}-\frac{r-3}{\mu(r-1)}\left(\frac{2}{\beta\mu (r-1)}\right)^{\frac{2}{r-3}}\right)$ and $\widetilde{\gamma}=\frac{C_{\sigma}^2\mu^2\lambda_{N_0}^2}{8\left(\mu\lambda_{N_0}-\frac{r-3}{\mu(r-1)}\left(\frac{2}{\beta\mu (r-1)}\right)^{\frac{2}{r-3}}\right)} $. 
	
	For $n=3$, $r=3$ and $\beta\mu>1$, we have the following \emph{asymptotic log-Harnack inequality}:
	\begin{align}\label{4a2}
	\mathrm{P}_t\log f(\y)&\leq \log\mathrm{P}_tf(\x)+ \frac{C_{\sigma}^2\mu\lambda_{N_0}}{8}\|\x-\y\|_{\H}^2+e^{-\frac{\mu\lambda_{N_0}t}{2}}\|\x-\y\|_{\H}\|\nabla\log f\|_{\infty}, \ t >0, 
	\end{align}
\end{theorem}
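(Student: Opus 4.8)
The plan is to reproduce the argument of Theorem~\ref{maint} essentially verbatim, the only substantive change being the step that produces the contraction of the coupling. Given $\y\in\H$, I would work with the same controlled process $\v(\cdot)$ solving \eqref{4p4} and the same Girsanov change of measure, so that under the weighted measure $\widetilde{\mathbb{P}}$ the shifted process is a cylindrical Wiener process and $\v(\cdot,\y)$ has the law of $\u(\cdot,\y)$, whence $\mathrm{P}_t\log f(\y)=\E_{\widetilde{\mathbb{P}}}[\log f(\v(t,\y))]$. From this point the inequalities \eqref{4p2} and \eqref{4a2} follow exactly as in \eqref{4.44}--\eqref{4.45}: write $\log f(\v(t,\y))=\log f(\u(t,\x))+[\log f(\v(t,\y))-\log f(\u(t,\x))]$, bound the bracket by $\|\nabla\log f\|_{\infty}\|\u(t,\x)-\v(t,\y)\|_{\H}$, apply the entropy inequality \eqref{4.42} to turn $\E_{\widetilde{\mathbb{P}}}[\log f(\u(t,\x))]$ into $\log\mathrm{P}_tf(\x)+\E[\Phi(t)\log\Phi(t)]$, and estimate $\E[\Phi(t)\log\Phi(t)]=\frac{\mu^2\lambda_{N_0}^2}{8}\E_{\widetilde{\mathbb{P}}}\int_0^t\|\sigma^{-1}\w^l(s)\|_{\H}^2\,\d s$ using \eqref{4.1}. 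Everything therefore reduces to an exponential decay bound for $\w(\cdot):=\u(\cdot,\x)-\v(\cdot,\y)$.

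The key simplification relative to the range $r\in[1,3]$ is that in the difference equation \eqref{4.19} the stochastic term cancels, so $\w$ obeys a random PDE carrying no It\^o integral, and the decay estimate can be carried out pathwise --- this is why no smallness of $\Tr(\sigma\sigma^*)$ and no exponential moment bound (Lemma~\ref{lem4.6}) is needed for $r>3$. Testing \eqref{4.19} with $\w$ yields
\[
\tfrac{\d}{\d t}\|\w\|_{\H}^2+2\mu\|\w\|_{\V}^2=-2\langle\B(\u)-\B(\v),\w\rangle-2\beta\langle\mathcal{C}(\u)-\mathcal{C}(\v),\w\rangle-\mu\lambda_{N_0}\|\mathrm{P}_{N_0}\w\|_{\H}^2 .
\]
For $r\in(3,\infty)$, using $\langle\B(\u)-\B(\v),\w\rangle=\langle\B(\w,\v),\w\rangle$ together with the bilinear estimate \eqref{347} and the Forchheimer lower bound \eqref{261}, the two copies of $\||\v|^{(r-1)/2}\w\|_{\H}^2$ cancel --- this cancellation is exactly the monotonicity of Theorem~\ref{thm2.2} --- leaving $\tfrac{\d}{\d t}\|\w\|_{\H}^2+\mu\|\w\|_{\V}^2\le 2\eta\|\w\|_{\H}^2-\mu\lambda_{N_0}\|\mathrm{P}_{N_0}\w\|_{\H}^2$ with $\eta$ as in \eqref{215}. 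By the low--high frequency inequality \eqref{4.24}, $\mu\|\w\|_{\V}^2+\mu\lambda_{N_0}\|\mathrm{P}_{N_0}\w\|_{\H}^2\ge\mu\lambda_{N_0}\|\w\|_{\H}^2$, so for $N_0$ large enough that $\mu\lambda_{N_0}>2\eta$, Gronwall's lemma gives $\|\w(t)\|_{\H}^2\le\|\x-\y\|_{\H}^2 e^{-2\widetilde{\theta}t}$ with $\widetilde{\theta}=\tfrac12(\mu\lambda_{N_0}-2\eta)$. In the critical case $n=r=3$ with $\beta\mu>1$, one instead estimates $-2\langle\B(\w,\v),\w\rangle=2b(\w,\w,\v)\le\mu\|\w\|_{\V}^2+\tfrac1\mu\|\,|\w|\,|\v|\,\|_{\H}^2$ and, from the $r=3$ case of \eqref{2.23}, $-2\beta\langle\mathcal{C}(\u)-\mathcal{C}(\v),\w\rangle\le-\beta\|\,|\v|\,\w\,\|_{\H}^2$; since $\beta\mu>1$ the resulting coefficient $\tfrac1\mu-\beta$ is negative, and the same frequency-splitting plus Gronwall argument produces $\|\w(t)\|_{\H}^2\le\|\x-\y\|_{\H}^2 e^{-\mu\lambda_{N_0}t}$.

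Given this deterministic decay, the Girsanov density is legitimate ($\|h(s)\|_{\H}^2\le\frac{\mu^2\lambda_{N_0}^2C_\sigma^2}{4}\|\x-\y\|_{\H}^2 e^{-2\widetilde{\theta}s}$ is integrable on $[0,\infty)$ uniformly in $\omega$, so Novikov's condition holds trivially and $\{\Phi(t)\}$ is a true martingale), and one gets $\E[\Phi(t)\log\Phi(t)]\le\frac{C_\sigma^2\mu^2\lambda_{N_0}^2}{8}\int_0^t\|\x-\y\|_{\H}^2 e^{-2\widetilde{\theta}s}\,\d s\le\widetilde{\gamma}\|\x-\y\|_{\H}^2$ together with $\E_{\widetilde{\mathbb{P}}}[\|\w(t)\|_{\H}]\le(\E_{\widetilde{\mathbb{P}}}[\|\w(t)\|_{\H}^2])^{1/2}\le e^{-\widetilde{\theta}t}\|\x-\y\|_{\H}$; substituting these into the scheme of the first paragraph gives \eqref{4p2}, and \eqref{4a2} follows the same way with the decay rate $\tfrac{\mu\lambda_{N_0}}{2}$ for $\|\w(t)\|_{\H}$. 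The one delicate point is the bookkeeping of the second paragraph: the viscous dissipation $2\mu\|\w\|_{\V}^2$ must be apportioned so that, after the nonlinear terms are absorbed, precisely the amount $\mu\|\w\|_{\V}^2$ remains to be combined with the control term via \eqref{4.24}; in the critical case this constraint is what pins down the threshold $\beta\mu>1$, and because the two-dimensional Ladyzhenskaya inequality is unavailable for $n=3$ one must lean entirely on the algebraic cancellation between $\B$ and $\mathcal{C}$ rather than on a Gronwall argument with a random integrating factor as in Theorem~\ref{maint}.
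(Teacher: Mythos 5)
Your proposal is correct and follows essentially the same route as the paper: the same coupling \eqref{4p4} with Girsanov change of measure, the pathwise (no It\^o integral, hence no need for Lemma \ref{lem4.6}) exponential decay of $\w=\u-\v$ obtained by cancelling the convective term against the Forchheimer monotonicity and then invoking the frequency-splitting inequality \eqref{4.24}, and finally the entropy inequality \eqref{4.42} with the bound on $\E[\Phi(t)\log\Phi(t)]$. The only differences are cosmetic: you write the bilinear term as $\langle\B(\w,\v),\w\rangle$ instead of $\langle\B(\w,\u),\w\rangle$, and in the critical case you absorb $\bigl(\tfrac{1}{\mu}-\beta\bigr)\||\v|\w\|_{\H}^2\le 0$ after a Young step with weight $\mu$, whereas the paper keeps $\bigl(2\mu-\tfrac{1}{\beta}\bigr)\|\w\|_{\V}^2$ and uses the refined splitting \eqref{424}; both yield the same rates and constants.
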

Analogous to Corollary \ref{cor4.4}, we have the following result for $n=2,3$ and $r\in(3,\infty)$. For the case $n=3$, $r=3$ and $\beta\mu>1$, one has to replace $\wi{\theta}$ with $\frac{\mu\lambda_{N_0}}{2}$ and $\wi{\gamma}$ with $ \frac{C_{\sigma}^2\mu\lambda_{N_0}}{8}$ in the Corollary given below. 
\begin{corollary}\label{cor4p4} 
	Under the assumptions of Theorem \ref{maint}, we have 
	\begin{enumerate}
		\item [(1)] \textbf{Gradient estimate.} For any $\y\in\H$, $t>0$ and $f\in\mathrm{Lip}_b(\H):=\mathrm{Lip}(\H)\cap\mathscr{B}_b(\H),$ we have 
		\begin{align}
		\|\nabla\mathrm{P}_tf(\y)\|_{\H}\leq \sqrt{\wi{\gamma}}\sqrt{\mathrm{P}_tf^2(\y)-(\mathrm{P}_tf)^2(\y)}+e^{-\widetilde{\theta}t}\|\nabla f\|_{\infty}.
		\end{align}
		In particular, $\mathrm{P}_t$ is asymptotically strong Feller. 
		\item [(2)] \textbf{Asymptotic irreducibility.} Let $\y\in\H$ and $\A\subset\H$ be a measurable set such that $$\delta(\y,\A):= \liminf_{t\to\infty}\mathrm{P}_t(\y,\A)>0.$$ Then, we have $$\liminf_{t\to\infty}\mathrm{P}_t(\x,\A_{\e})>0, \ \text{ for any } \ \x\in\H, \ \e>0,$$ where $\A_{\e}:=\{\x\in\H:\rho(\x,\A)<\e\}$ with $\rho(\x,\A):=\inf\limits_{\z\in\A}\|\x-\z\|_{\H}.$ Furthermore, for any $\e_0\in(0,\delta(\y,\A))$, there exists a constant $t_0>0$ such that $$\mathrm{P}_t(\x,\A_{\e})>0, \ \text{ provided }\ t\geq t_0\ \text{ and }\ e^{-\wi{\theta}t}\|\x-\y\|_{\H}<\e\e_0 .$$ 
		\item [(3)] \textbf{Asymptotic heat kernel estimate.} If  $\mathrm{P}_t$ has an invariant probability measure $\nu$,  then for any $f\in\mathscr{B}_b^+(\H)$ with $\|\nabla f\|_{\infty}<\infty$, we have 
		$$\limsup_{t\to\infty}\mathrm{P}_tf(\y)\leq\log\left(\frac{\nu(e^{f})}{\int_{\H}e^{-\wi{\gamma}\|\x-\y\|_{\H}^2}\nu(\d \x)}\right), \ \y\in\H. $$ Consequently, for any closed set $\A\subset\H$ with $\nu(\A)=0$, we have $$\lim_{t\to\infty}\mathrm{P}_t\chi_{\A}(\y)=0,\ \text{ for all }\ \y\in\H.$$
		\item [(4)] \textbf{Uniqueness of invariant probability measure.} The Markovian transition semigroup $\mathrm{P}_t$ has at most one invariant probability measure. 
	\end{enumerate}
\end{corollary}

We begin with the following Lemma, which is analogous to Lemma \ref{lem4p6}.
\begin{lemma}
	For $n=2,3$ and $r\in(3,\infty)$, the asymptotic coupling $(\u(\cdot),\v(\cdot))$ given in \eqref{4.7} satisfies: 
	\begin{align}\label{4p18}
	\mathbb{E}_{\widetilde{\mathbb{P}}}\left[\|\u(t)-\v(t)\|_{\H}^2\right]\leq e^{-\wi{\theta}t}\|\x-\y\|_{\H}^2, \ t\geq 0 ,
	\end{align}
	where  $\wi{\theta}=\left(\mu\lambda_{N_0}-\frac{r-3}{\mu(r-1)}\left(\frac{2}{\beta\mu (r-1)}\right)^{\frac{2}{r-3}}\right)$. 
	
	For $n=3$, $r=3$ and $\beta\mu>1$, we have 
	\begin{align}\label{4a18}
	\mathbb{E}_{\widetilde{\mathbb{P}}}\left[\|\u(t)-\v(t)\|_{\H}^2\right]\leq e^{-\mu\lambda_{N_0}t}\|\x-\y\|_{\H}^2, \ t\geq 0. 
	\end{align}
\end{lemma}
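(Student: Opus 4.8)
The plan is to follow the blueprint of Lemma~\ref{lem4p6}, but now to exploit the monotonicity estimates of Theorems~\ref{thm2.2} and~\ref{thm2.3} in place of the two-dimensional Ladyzhenskaya inequality. Set $\w(\cdot):=\u(\cdot)-\v(\cdot)$; subtracting the two equations in~\eqref{4.7} shows that $\w$ solves the noise-free differential equation~\eqref{4.19} with $\w(0)=\x-\y$, so the bound obtained below will be \emph{pathwise}, and the expectations $\mathbb{E}_{\widetilde{\mathbb{P}}}$ in~\eqref{4p18} and~\eqref{4a18} can then be taken for free; this is the structural simplification relative to the range $r\in[1,3]$, where the exponential moment estimate of Lemma~\ref{lem4.6} was needed. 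Taking the $\H$-inner product with $\w(t)$, as in~\eqref{4.20}, yields
\begin{align*}
&\frac12\frac{\d}{\d t}\|\w(t)\|_\H^2+\mu\|\w(t)\|_\V^2+\langle\B(\u(t))-\B(\v(t)),\w(t)\rangle\\
&\qquad+\beta\langle\mathcal{C}(\u(t))-\mathcal{C}(\v(t)),\w(t)\rangle=-\frac{\mu\lambda_{N_0}}{2}\|\mathrm{P}_{N_0}\w(t)\|_\H^2 .
\end{align*}

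For $n=2,3$ and $r\in(3,\infty)$, the idea is to absorb the convective term into half of the viscous dissipation and to let the Forchheimer term supply the remainder, exactly as in Theorem~\ref{thm2.2}. Writing $\langle\B(\u)-\B(\v),\w\rangle=\langle\B(\w,\v),\w\rangle$, I would use the estimate~\eqref{347} (with $\u_1=\u$, $\u_2=\v$), $|\langle\B(\w,\v),\w\rangle|\le\frac\mu2\|\w\|_\V^2+\frac\beta2\||\v|^{\frac{r-1}{2}}\w\|_\H^2+\eta\|\w\|_\H^2$, with $\eta$ as in~\eqref{215}, together with the lower bound~\eqref{261}, $\beta\langle\mathcal{C}(\u)-\mathcal{C}(\v),\w\rangle\ge\frac\beta2\||\v|^{\frac{r-1}{2}}\w\|_\H^2$. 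The two weighted terms cancel exactly, leaving
\begin{equation*}
\frac{\d}{\d t}\|\w(t)\|_\H^2+\mu\|\w(t)\|_\V^2+\mu\lambda_{N_0}\|\mathrm{P}_{N_0}\w(t)\|_\H^2\le 2\eta\,\|\w(t)\|_\H^2 ,
\end{equation*}
where $2\eta=\frac{r-3}{\mu(r-1)}\left(\frac{2}{\beta\mu(r-1)}\right)^{\frac{2}{r-3}}$. The low/high-frequency splitting~\eqref{4.24} then gives $\frac{\d}{\d t}\|\w(t)\|_\H^2\le-(\mu\lambda_{N_0}-2\eta)\|\w(t)\|_\H^2$, and, after choosing $N_0$ large enough that $\mu\lambda_{N_0}>2\eta$ (possible since $\lambda_k\uparrow\infty$), Gronwall's lemma yields~\eqref{4p18} with $\wi{\theta}=\mu\lambda_{N_0}-2\eta$.

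For the critical case $n=3$, $r=3$, $\beta\mu>1$, the same computation applies with the $r=3$ versions of these bounds, in the spirit of Theorem~\ref{thm2.3}: $|\langle\B(\w,\v),\w\rangle|=|b(\w,\w,\v)|\le\frac\mu2\|\w\|_\V^2+\frac1{2\mu}\||\v|\w\|_\H^2$ by Cauchy--Schwarz and Young, and $\beta\langle\mathcal{C}(\u)-\mathcal{C}(\v),\w\rangle\ge\frac\beta2\||\v|\w\|_\H^2$ from the $r=3$ instance of~\eqref{2.23}. Since $\beta\mu>1$ forces $\frac1{2\mu}<\frac\beta2$, the cubic term is absorbed, one copy $\frac\mu2\|\w\|_\V^2$ of the dissipation survives, and one arrives at $\frac{\d}{\d t}\|\w(t)\|_\H^2+\mu\|\w(t)\|_\V^2+\mu\lambda_{N_0}\|\mathrm{P}_{N_0}\w(t)\|_\H^2\le0$; applying~\eqref{4.24} and Gronwall's inequality gives~\eqref{4a18} with exponential rate $\mu\lambda_{N_0}$.

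The step needing the most care is the bookkeeping of the nonlinear terms: one must retain \emph{at least} one copy $\mu\|\w\|_\V^2$ of the dissipation --- the convective term consumes the other copy --- because that copy is exactly what combines with the control-induced term $\mu\lambda_{N_0}\|\mathrm{P}_{N_0}\w\|_\H^2$ through the spectral splitting~\eqref{4.24}; and one must check the precise algebraic cancellation of the weighted term $\||\v|^{\frac{r-1}{2}}\w\|_\H^2$ (respectively $\||\v|\w\|_\H^2$ when $r=3$), which is what makes the restriction $\beta\mu>1$ unavoidable in the critical case and what, for $r>3$, removes any need for a moment bound on $\v$. No compactness or Girsanov input enters this lemma; it rests entirely on the pathwise energy identity for $\w$.
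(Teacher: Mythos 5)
Your proof is correct and follows essentially the same route as the paper: a pathwise energy estimate for $\w=\u-\v$ in which the convective difference is absorbed partly into the viscous dissipation and partly into the Forchheimer monotonicity term via the interpolation that produces the constant $\frac{r-3}{\mu(r-1)}\bigl(\frac{2}{\beta\mu(r-1)}\bigr)^{\frac{2}{r-3}}$, followed by the spectral splitting \eqref{4.24} and Gronwall, with no use of Lemma \ref{lem4.6} or Girsanov. The only (immaterial) differences are that you difference the bilinear term as $\B(\w,\v)$ so the weighted cancellation runs against $\||\v|^{\frac{r-1}{2}}\w\|_{\H}^2$ while the paper uses $\B(\w,\u)$ and the $\u$-weighted term, and in the critical case you choose Young weights $\frac{\mu}{2},\frac{1}{2\mu}$ instead of the paper's $\beta$-weighted splitting; both give the same rates $\wi\theta$ and $\mu\lambda_{N_0}$.
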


\begin{proof}
	The case of $n=2,3$ and $r\in(3,\infty)$ follows similarly as in the proof of Lemma \ref{lem4p6} except for the estimates \eqref{4.21} and \eqref{4.22}. It is important to note that, in these cases, we are  not using Lemma \ref{lem4.6}.   From \eqref{2.23}, we have 
	\begin{align}\label{4.30}
	-2\beta\langle\mathcal{C}(\u)-\mathcal{C}(\v),\w\rangle&\leq-\beta\||\v|^{\frac{r-1}{2}}\w\|_{\H}^2. 
	\end{align}
	Using H\"older's and Young's inequalities, we estimate the term  $2|\langle(\B(\u)-\B(\v)),\w\rangle|=2|\langle\B(\w,\u),\w\rangle |$ as  
	\begin{align}\label{4.31}
	2|\langle\B(\w,\u),\w\rangle |&\leq 2\|\w\|_{\V}\|\u\w\|_{\H}\leq\mu\|\w\|_{\V}^2+\frac{1}{\mu }\|\u\w\|_{\H}^2.
	\end{align}
	We take the term $\|\u\w\|_{\H}^2$ from \eqref{4.31} and use H\"older's and Young's inequalities to estimate it as
	\begin{align}\label{4.32}
	\|\u\w\|_{\H}^2	&=\int_{\mathcal{O}}|\u(x)|^2|\w(x)|^2\d x\nonumber\\&=\int_{\mathcal{O}}|\u(x)|^2|\w(x)|^{\frac{4}{r-1}}|\w(x)|^{\frac{2(r-3)}{r-1}}\d x\nonumber\\&\leq\left(\int_{\mathcal{O}}|\u(x)|^{r-1}|\w(x)|^2\d x\right)^{\frac{2}{r-1}}\left(\int_{\mathcal{O}}|\w(x)|^2\d x\right)^{\frac{r-3}{r-1}}\nonumber\\&\leq{\beta\mu }\left(\int_{\mathcal{O}}|\u(x)|^{r-1}|\w(x)|^2\d x\right)+\frac{r-3}{r-1}\left(\frac{2}{\beta\mu (r-1)}\right)^{\frac{2}{r-3}}\left(\int_{\mathcal{O}}|\w(x)|^2\d x\right),
	\end{align}
	for $r>3$. Combining \eqref{4.30}, \eqref{4.31}  and \eqref{4.32}, we obtain 
	\begin{align}\label{4.33}
	&-2	\beta	\langle\mathcal{C}(\u)-\mathcal{C}(\v),\w\rangle-2\langle\B(\w,\u),\w\rangle\leq \frac{r-3}{\mu(r-1)}\left(\frac{2}{\beta\mu (r-1)}\right)^{\frac{2}{r-3}}\|\w\|_{\H}^2.
	\end{align}
	Thus, a calculation similar to \eqref{4.25} yields 
	\begin{align}\label{4.34}
	&\frac{\d}{\d t}	\|\w(t)\|_{\H}^2+\mu\lambda_{N_0}\|\w(t)\|_{\H}^2\leq\frac{r-3}{\mu(r-1)}\left(\frac{2}{\beta\mu (r-1)}\right)^{\frac{2}{r-3}}\|\w(t)\|_{\H}^2,
	\end{align}
	for a.e. $t\in[0,T]$.	For $\mu^2\lambda_{N_0}>\frac{r-3}{r-1}\left(\frac{2}{\beta\mu (r-1)}\right)^{\frac{2}{r-3}}$, we obtain 
	\begin{align}\label{4.35}
	\|\w(t)\|_{\H}^2\leq \|\w(0)\|_{\H}^2\exp\left\{-\left(\mu\lambda_{N_0}-\frac{r-3}{\mu(r-1)}\left(\frac{2}{\beta\mu (r-1)}\right)^{\frac{2}{r-3}}\right)t\right\}, 
	\end{align}
	for all $t\in[0,T]$. Taking expectation with respect to $\widetilde{\mathbb{P}}$, we get 
	\begin{align}\label{435}
	\E_{\widetilde{\mathbb{P}}}\left[\|\w(t)\|_{\H}^2\right]\leq \E_{\widetilde{\mathbb{P}}}\left[\|\w(0)\|_{\H}^2\right]\exp\left\{-\left(\mu\lambda_{N_0}-\frac{r-3}{\mu(r-1)}\left(\frac{2}{\beta\mu (r-1)}\right)^{\frac{2}{r-3}}\right)t\right\}, 
	\end{align}
	and the estimate \eqref{4p18} follows. Note  that $\{\lambda_k\}_{k=1}^{\infty}$ is  an unbounded increasing sequence,  and thus we can choose $N_0$ large enough so that $\lambda_{N_0}>\frac{(r-3)}{\mu^2(r-1)}\left(\frac{2}{\beta\mu (r-1)}\right)^{\frac{2}{r-3}}$, which completes the proof for $n=2,3$ and $r\in(3,\infty)$.

	For $n=r=3$, 	from \eqref{2.23}, we have 
	\begin{align}\label{4.36}
	-2	\beta	\langle\mathcal{C}(\u)-\mathcal{C}(\v),\w\rangle \leq- \beta\|\u\w\|_{\H}^2, 
	\end{align}
	and a calculation similar to \eqref{4.31} gives 
	\begin{align}\label{4.37}
	2|\langle\B(\w,\u),\w\rangle |&\leq 2\|\w\|_{\V}\|\u\w\|_{\H}\leq \frac{1}{\beta}\|\w\|_{\V}^2+\beta\|\u\w\|_{\H}^2.
	\end{align}
	Combining \eqref{4.36} and \eqref{4.37}, we obtain 
	\begin{align}\label{4.38}
	&-2\mu\langle\A\w,\w\rangle-2	\beta	\langle\mathcal{C}(\u)-\mathcal{C}(\v),\w\rangle-2\langle\B(\w,\u),\w\rangle\leq \left(2\mu-\frac{1}{\beta}\right)\|\w\|_{\V}^2.
	\end{align}
	Similar \eqref{4.24}, we estimate $\left(2\mu-\frac{1}{\beta}\right)\|\w\|_{\V}^2+\mu\lambda_{N_0}\|\mathrm{P}_{N_0}\w\|_{\H}^2$ as
	\begin{align}\label{424} 
	&\left(2\mu-\frac{1}{\beta}\right)\|\w\|_{\V}^2+\mu\lambda_{N_0}\|\mathrm{P}_{N_0}\w\|_{\H}^2\nonumber\\&\geq \left(2\mu-\frac{1}{\beta}\right)\|(\mathrm{I}-\mathrm{P}_{N_0})\w\|_{\V}^2+\mu\lambda_{N_0}\|\mathrm{P}_{N_0}\w\|_{\H}^2\nonumber\\&\geq \left(\mu-\frac{1}{\beta}\right)\lambda_{N_0}\|(\mathrm{I}-\mathrm{P}_{N_0})\w\|_{\H}^2+\mu\lambda_{N_0}\left(\|\mathrm{P}_{N_0}\w\|_{\H}^2+\|(\mathrm{I}-\mathrm{P}_{N_0})\w\|_{\H}^2\right)\nonumber\\&\geq \mu\lambda_{N_0}\|\w\|_{\H}^2,
	\end{align}
	for $\beta\mu>1$. 	Thus, a calculation similar to \eqref{4.25}  gives 
	\begin{align}\label{4.39}
	&\frac{\d}{\d t}	\|\w(t)\|_{\H}^2+\mu\lambda_{N_0}\|\w(t)\|_{\H}^2\leq 0, 
	\end{align}
	for a.e. $t\in[0,T]$ and hence we get 
	\begin{align}
	\|\w(t)\|_{\H}^2\leq\|\w(0)\|_{\H}^2e^{-\mu\lambda_{N_0}t}, 
	\end{align}
	for all $t\in[0,T]$ and the estimate \eqref{4a18} follows.
\end{proof}

We are bow ready to prove Theorem \ref{maint1} and Corollary \ref{cor4p4}. 
\begin{proof}[Proof of Theorem \ref{maint1}]
For $n=2,3$ and $r\in(3,\infty)$, one can get a proof  in a similar way as that of the proof of Theorem \ref{maint} except for the estimate 
\begin{align}
\E\left[\Phi(t)\log\Phi(t)\right]\leq \frac{C_{\sigma}^2\mu^2\lambda_{N_0}^2}{8\left(\mu\lambda_{N_0}-\frac{r-3}{\mu(r-1)}\left(\frac{2}{\beta\mu (r-1)}\right)^{\frac{2}{r-3}}\right)} \|\x-\y\|_{\H}^2,
\end{align}
for $\mu^2\lambda_{N_0}>\frac{r-3}{r-1}\left(\frac{2}{\beta\mu (r-1)}\right)^{\frac{2}{r-3}}$ and hence the estimate \eqref{4p2} follows. 

For $n=r=3$ and $\beta\mu> 1$, the above estimate has to be replaced by 
\begin{align}
\E\left[\Phi(t)\log\Phi(t)\right]\leq \frac{C_{\sigma}^2\mu\lambda_{N_0}}{8} \|\x-\y\|_{\H}^2,
\end{align}
and one can easily get the estimate \eqref{4p2}.
\end{proof}

\begin{proof}[Proof of Corollary \ref{cor4p4}]
	For the cases $n=2,3$ and $r\in(3,\infty)$, we define 
	\begin{align*}\Theta(\x,\y)=\wi\gamma \|\x-\y\|_{\H}^2\ \text{  and }\ \Psi_t(\x,\y)=e^{-\wi{\theta}t}\|\x-\y\|_{\H}. \end{align*}  In order to prove the  existence of an invariant measure for  the transition semigroup $\{\mathrm{P}_t\}_{t\geq 0},$ we can follow similar arguments as in the proof of Corollary \ref{cor4.4}. Here we sketch important steps only. A calculation similar to \eqref{4.34} yields 
	\begin{align}\label{4.51}
	&\frac{\d}{\d t}	\|\u(t,\x)-\u(t,\y)\|_{\H}^2+\mu\|\u(t,\x)-\u(t,\y)\|_{\V}^2\nonumber\\&\leq\frac{r-3}{\mu(r-1)}\left(\frac{2}{\beta\mu (r-1)}\right)^{\frac{2}{r-3}}\|\u(t,\x)-\u(t,\y)\|_{\H}^2,
	\end{align}
	for a.e. $t\in[0,T]$. An application Gronwall's inequality yields 
	\begin{align}
	\|\u(t,\x)-\u(t,\y)\|_{\H}^2\leq \|\x-\y\|_{\H}^2\exp\left\{\left(\frac{r-3}{\mu(r-1)}\left(\frac{2}{\beta\mu (r-1)}\right)^{\frac{2}{r-3}}\right)t\right\},
	\end{align}
	and proceeding similarly in the proof of Corollary \ref{cor4.4}, we get the required result. 
	
	For the case $n=3$, $r=3$ and $\beta\mu>1$, we obtain 
	\begin{align}
	\|\u(t,\x)-\u(t,\y)\|_{\H}^2\leq e^{-\mu\lambda_1 t}\|\x-\y\|_{\H}^2,
	\end{align}
	and the existence of an invariant measure for the transition semigroup $\{\mathrm{P}_t\}_{t\geq 0}$ can be proved in a similar way as in the case of $n=2$ and $r\in[1,3]$. 
\end{proof}

\section{The SCBF equations perturbed by multiplicative noise} \label{sec5}\setcounter{equation}{0} 
In this section, we consider the following stochastic  convective Brinkman-Forchheimer  equations perturbed by degenerate multiplicative  Gaussian noise:
\begin{equation}\label{51}
\left\{
\begin{aligned}
\d\u(t)+[\mu \A\u(t)+\B(\u(t))+\beta\mathcal{C}(\u(t))]\d t&=\sigma(\u)\d\W(t), \ t\in(0,T),\\
\u(0)&=\x,
\end{aligned}
\right.
\end{equation}
where $\x\in\H$. Due to technical difficulties, we consider the cases $n=2,3$ and $r\in[3,\infty)$ only. In the degenerate multiplicative noise case, the  asymptotic  log-Harnack  inequality  for  several  kinds  of  models  on  stochastic differential systems with infinite memory   is established in \cite{JBFY} and  for a class of semilinear SPDEs  is obtained in \cite{WHSL1}.  

Let $\mathcal{L}_2(\H)=\mathcal{L}_2(\H,\H)$ be the space of all Hilbert-Schmidt operators from $\H$ to $\H$. We need the following assumptions on the noise coefficient to obtain our main results. 
\begin{assumption}\label{ass4.9}
	\begin{enumerate}
		\item [(A1)]
	The mapping $\sigma:\V\to\mathcal{L}_2(\H)$ is bounded and Lipschitz, that is, 
	\begin{align}
	\sup_{\u\in\V}\|\sigma(\u)\|_{\mathcal{L}_2(\H)}<\infty,
\end{align}
and  for all $\u_1,\u_2\in\H$, there exists a constant $L>0$ such that 
\begin{align}
\|\sigma(\u_1)-\sigma(\u_2)\|_{\mathcal{L}_2(\H)}^2\leq L\|\u_1-\u_2\|_{\H}^2.
\end{align}
\item [(A2)] The exists a constant $N_0\in\mathbb{N}$ so that for all $\u\in\H$, we have $\mathrm{P}_{N_0}\H\subset\mathrm{Range}(\sigma(\u))$  and $\sigma(\u)\v=0$ if $\v\in(\mathrm{I}-\mathrm{P}_{N_0})\H$. Furthermore, the corresponding pseudo-inverse operator $\sigma(\u)^{-1}:\mathrm{P}_{N_0}\H\to  \mathrm{P}_{N_0}\H$ is uniformly bounded, that is, there exists a constant $\widetilde{K}>0$ such that 
\begin{align}
\sup_{\u\in\H}\|\sigma(\u)^{-1}\|_{\mathcal{L}(\mathrm{P}_{N_0}(\H))}\leq \wi K, 
\end{align}
where $\mathcal{L}(\mathrm{P}_{N_0}(\H))=\mathcal{L}(\mathrm{P}_{N_0}(\H),\mathrm{P}_{N_0}(\H))$ is the space of all bounded linear operators from $\mathrm{P}_{N_0}(\H)$ to $\mathrm{P}_{N_0}(\H)$. 
	\end{enumerate}
\end{assumption}
Under the above assumptions, we have the following existence and uniqueness of strong solution to the system \eqref{51}. 
\begin{theorem}[Theorem 3.7, \cite{MTM8}]\label{exis1}
	Let $\x\in\H$, for $n=2,3$ and $r\geq 3$ be given ($2\beta\mu\geq 1,$ for $n=r=3$).  Then there exists a \emph{pathwise unique strong solution}
	$\u(\cdot)$ to the system (\ref{32}) such that \begin{align*}\u&\in\mathrm{L}^2(\Omega;\mathrm{L}^{\infty}(0,T;\H)\cap\mathrm{L}^2(0,T;\V))\cap\mathrm{L}^{r+1}(\Omega;\mathrm{L}^{r+1}(0,T;\widetilde{\L}^{r+1})),\end{align*} with $\mathbb{P}$-a.s., continuous modification with trajectories in $\H$ (that is, $\u\in\C([0,T];\H)\cap\mathrm{L}^2(0,T;\V))\cap\mathrm{L}^{r+1}(\Omega;\mathrm{L}^{r+1}(0,T;\widetilde{\L}^{r+1})$, $\mathbb{P}$-a.s.) satisfying 
	\begin{align}
	\E\left[\sup_{t\in[0,T]}\|\u(t)\|_{\H}^2+2\mu \int_0^T\|\u(t)\|_{\V}^2\d t+2\beta\int_0^T\|\u(t)\|_{\widetilde{\L}^{r+1}}^{r+1}\d t\right]\leq C(L,T)\|\x\|_{\H}^2.
	\end{align}
	Moreover, the strong solution $\u(\cdot)$ satisfies the following It\^o formula: 
	\begin{align}
	&	\|\u(t)\|_{\H}^2+2\mu \int_0^t\|\u(s)\|_{\V}^2\d s+2\beta\int_0^t\|\u(s)\|_{\widetilde{\L}^{r+1}}^{r+1}\d s\nonumber\\&=\|{\u_0}\|_{\H}^2+\int_0^t\Tr(\sigma(\u)\sigma^*(\u))\d s+2\int_0^t(\sigma(\u)\d\W(s),\u(s)),
	\end{align}
	for all $t\in[0,T]$, $\mathbb{P}$-a.s.
\end{theorem}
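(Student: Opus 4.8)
The plan is to establish Theorem~\ref{exis1} by a Faedo--Galerkin approximation combined with a stochastic version of the localized Minty--Browder monotonicity technique, following the corresponding multiplicative-noise argument of \cite{MTM8}; the monotonicity estimates of Theorems~\ref{thm2.2}--\ref{thm2.3} and the accretivity \eqref{2.23}--\eqref{214} of $\mathcal{C}(\cdot)$ are the structural facts that make it work. \textbf{Step 1.} Project \eqref{51} onto $\H_N$ to obtain a finite-dimensional It\^o SDE for $\u^N$ with drift $-\mathrm{P}_N(\mu\A\u^N+\B(\u^N)+\beta\mathcal{C}(\u^N))$ and diffusion $\mathrm{P}_N\sigma(\u^N)$; the finite-dimensional projections of $\B$ and $\mathcal{C}$ are locally Lipschitz and $\sigma$ is globally Lipschitz (Assumption~\ref{ass4.9}(A1)), so there is a unique local solution, made global by the bounds of Step~2. \textbf{Step 2.} Apply It\^o's formula to $\|\u^N(t)\|_{\H}^2$; using $b(\u^N,\u^N,\u^N)=0$ from \eqref{b0}, $\langle\mathcal{C}(\u^N),\u^N\rangle=\|\u^N\|_{\wi\L^{r+1}}^{r+1}\ge0$, $\langle\A\u^N,\u^N\rangle=\|\u^N\|_{\V}^2$ from \eqref{2.7a}, the uniform bound on $\|\sigma(\cdot)\|_{\mathcal{L}_2(\H)}$ for the trace term, the Burkholder--Davis--Gundy inequality for the stochastic term, and Gronwall's lemma, one gets a bound for $\{\u^N\}$ in $\mathrm{L}^2(\Omega;\mathrm{L}^\infty(0,T;\H)\cap\mathrm{L}^2(0,T;\V))\cap\mathrm{L}^{r+1}(\Omega;\mathrm{L}^{r+1}(0,T;\wi\L^{r+1}))$, uniform in $N$; this already yields the quantitative estimate of the theorem, and via \eqref{2.9a}--\eqref{2.7a} and Assumption~\ref{ass4.9}(A1) it also gives uniform bounds for $\{\mu\A\u^N+\B(\u^N)+\beta\mathcal{C}(\u^N)\}$ in $\mathrm{L}^2(\Omega;\mathrm{L}^2(0,T;\V')+\mathrm{L}^{\frac{r+1}{r}}(0,T;\wi\L^{\frac{r+1}{r}}))$ and for $\{\mathrm{P}_N\sigma(\u^N)\}$ in $\mathrm{L}^2(\Omega\times(0,T);\mathcal{L}_2(\H))$.

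By the Banach--Alaoglu theorem, extract a subsequence with $\u^N\xrightarrow{w^*}\u$, $\u^N\xrightarrow{w}\u$, $\mu\A\u^N+\B(\u^N)+\beta\mathcal{C}(\u^N)\xrightarrow{w}\G_0$ and $\mathrm{P}_N\sigma(\u^N)\xrightarrow{w}\Sigma$ in the respective spaces, so the limit solves $\d\u+\G_0\,\d t=\Sigma\,\d\W$ with $\u(0)=\x$. To identify $\G_0=\mu\A\u+\B(\u)+\beta\mathcal{C}(\u)$, use that $\G+\eta\mathrm{I}$ is monotone on $\V\cap\wi\L^{r+1}$ with $\eta$ from \eqref{215} when $r>3$ (Theorem~\ref{thm2.2}), and globally monotone when $n=r=3$, $2\beta\mu\ge1$ (Theorem~\ref{thm2.3}). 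Working with the exponentially weighted energy identity for $e^{-2\eta t}\|\u^N(t)\|_{\H}^2$ (no weight in the critical case) and an arbitrary adapted $\V\cap\wi\L^{r+1}$-valued test process $\v$, insert the It\^o expansion into
\[
\E\!\int_0^T e^{-2\eta t}\big\langle\G(\v)-\G(\u^N)+\eta(\v-\u^N),\,\v-\u^N\big\rangle\,\d t\ \ge\ 0 ;
\]
pass to $\liminf_{N\to\infty}$ using weak lower semicontinuity of $\|\cdot\|_{\H}$ on the terminal term, the weak convergences above, and $\E\int_0^T\|\Sigma-\mathrm{P}_N\sigma(\u^N)\|_{\mathcal{L}_2(\H)}^2\,\d t\to0$, then insert the energy identity for $\u$ to obtain the same inequality with $\u^N,\G(\u^N)$ replaced by $\u,\G_0$. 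Choosing $\v=\u+\lambda\w$, dividing by $\lambda>0$, sending $\lambda\downarrow0$ and invoking demicontinuity (Lemma~\ref{lem2.8}) yields $\G_0=\mu\A\u+\B(\u)+\beta\mathcal{C}(\u)$.

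The weighted energy equality for the limit simultaneously gives $\u^N\to\u$ strongly in $\mathrm{L}^2(\Omega\times(0,T);\H)$, whence $\|\sigma(\u^N)-\sigma(\u)\|_{\mathcal{L}_2(\H)}\to0$ in $\mathrm{L}^2(\Omega\times(0,T))$ by Assumption~\ref{ass4.9}(A1) and therefore $\Sigma=\sigma(\u)$; continuity $\u\in\C([0,T];\H)$ follows from the Gelfand-triple structure $\V\cap\wi\L^{r+1}\hookrightarrow\H\hookrightarrow\V'+\wi\L^{\frac{r+1}{r}}$. For pathwise uniqueness, given two solutions $\u_1,\u_2$ with $\u_1(0)=\u_2(0)$, apply It\^o's formula to $e^{-(2\eta+L)t}\|\u_1(t)-\u_2(t)\|_{\H}^2$ (with the weight dropped when $r=3$, $2\beta\mu\ge1$, using \eqref{218}), using $\langle\B(\u_1)-\B(\u_2),\u_1-\u_2\rangle=\langle\B(\u_1-\u_2,\u_2),\u_1-\u_2\rangle$, the accretivity \eqref{2.23}--\eqref{214}, and the Lipschitz bound of Assumption~\ref{ass4.9}(A1); taking expectations and applying Gronwall forces $\u_1\equiv\u_2$, $\mathbb{P}$-a.s. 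Finally, since $\u\in\mathrm{L}^2(0,T;\V)\cap\mathrm{L}^{r+1}(0,T;\wi\L^{r+1})$ with $\d\u\in\mathrm{L}^2(0,T;\V')+\mathrm{L}^{\frac{r+1}{r}}(0,T;\wi\L^{\frac{r+1}{r}})+\mathcal{L}_2(\H)\,\d\W$, the stated It\^o formula for $\|\u(t)\|_{\H}^2$ is an instance of the infinite-dimensional It\^o lemma on that Gelfand triple.

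The main obstacle is that $\B$ is only \emph{locally} monotone, so the Krylov--Rozovskii variational framework does not apply directly: the Minty--Browder identification must be carried out on an exponentially weighted (or, in the subcritical $2$D range, an $\wi\L^4$-ball--localized) energy identity, and because the noise is multiplicative and no compactness is available a priori, the identification of the drift limit $\G_0$ and of the diffusion limit $\Sigma=\sigma(\u)$ must be done \emph{together}, with the strong $\mathrm{L}^2$-convergence of $\u^N$ being extracted from the weighted energy equality rather than from any tightness argument. The critical exponent $n=r=3$ additionally forces the condition $2\beta\mu\ge1$, which is exactly what makes $\G$ globally monotone (Theorem~\ref{thm2.3}) and allows the weights to be removed.
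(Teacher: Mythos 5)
The paper itself contains no proof of this statement: it is imported verbatim as Theorem 3.7 of \cite{MTM8}, whose method (Faedo--Galerkin approximation, uniform energy estimates via It\^o's formula and Burkholder--Davis--Gundy, and a stochastic generalization of the Minty--Browder monotonicity technique with exponential weights) is exactly the route you sketch, so your proposal follows essentially the same approach as the source on which the paper relies. The one step I would tighten is the identification of the diffusion limit: rather than asserting strong $\mathrm{L}^2(\Omega\times(0,T);\H)$-convergence of $\u^N$ ``from the weighted energy equality'' and then concluding $\Sigma=\sigma(\u)$, the standard variational argument incorporates the Hilbert--Schmidt term $\|\sigma(\u^N)-\sigma(\v)\|_{\mathcal{L}_2(\H)}^2$ directly into the exponentially weighted monotonicity inequality (the weight absorbing both $\eta$ from \eqref{215} and the Lipschitz constant $L$ of Assumption \ref{ass4.9}(A1)), so that $\G_0$ and $\Sigma$ are identified simultaneously by the Minty trick with $\v=\u+\lambda\w$; with (A1) this is a routine adjustment that does not affect the rest of your argument.
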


The main results of this section are the following: 

\begin{theorem}\label{maint2}
	Let $\u(t,\x)$ be the unique strong solution to the system \eqref{32}	with the initial data $\x\in\H$. Suppose that the noise co-efficient $\sigma(\cdot)$ satisfies Assumption \ref{ass4.9}. Let $n=2,3$, $r\in(3,\infty)$ and $\lambda_{N_0}>\left(\frac{r-3}{\mu^2(r-1)}\left(\frac{2}{\beta\mu (r-1)}\right)^{\frac{2}{r-3}}+\frac{L}{\mu}\right)$.  Then, for any $\x,\y\in\H$ and for any $f\in\mathscr{B}^+_b(\H)$ with $\|\nabla\log f\|_{\infty}<\infty$, we have the following \emph{asymptotic log-Harnack inequality}:
	\begin{align}\label{4b2}
	\mathrm{P}_t\log f(\y)&\leq \log\mathrm{P}_tf(\x)+\widehat{\gamma}\|\x-\y\|_{\H}^2+e^{-\widehat{\theta}t}\|\x-\y\|_{\H}\|\nabla\log f\|_{\infty}, \ t >0, 
	\end{align}
	where the constants  \ \begin{align*}\widehat{\theta}&=\frac{1}{2}\left[\mu\lambda_{N_0}-\left(\frac{r-3}{\mu(r-1)}\left(\frac{2}{\beta\mu (r-1)}\right)^{\frac{2}{r-3}}+L\right)\right]\ \text{ and }\\  \widehat{\gamma}&=\frac{\wi{K}^2\mu^2\lambda_{N_0}^2}{8\left[\mu\lambda_{N_0}-\left(\frac{r-3}{\mu(r-1)}\left(\frac{2}{\beta\mu (r-1)}\right)^{\frac{2}{r-3}}+L\right)\right]} .
	\end{align*}
	
	For $n=2,3$, $r=3$, $\beta\mu>1$ and $\lambda_{N_0}>\frac{L}{\mu}$, we have the following \emph{asymptotic log-Harnack inequality}:
	\begin{align}\label{4c2}
	\mathrm{P}_t\log f(\y)&\leq \log\mathrm{P}_tf(\x)+ \frac{\widetilde{K}\mu^2\lambda_{N_0}^2}{8(\mu\lambda_{N_0}-L)}\|\x-\y\|_{\H}^2+e^{-\frac{(\mu\lambda_{N_0}-L)}{2}t}\|\x-\y\|_{\H}\|\nabla\log f\|_{\infty}, 
	\end{align}
	for $t>0$. 
\end{theorem}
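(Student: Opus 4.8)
The plan is to adapt the asymptotic coupling argument that has already been carried out for the additive noise case (Theorems \ref{maint} and \ref{maint1}) to the multiplicative setting, so the proof will proceed in two stages: first a coupling lemma bounding $\mathbb{E}_{\widetilde{\mathbb{P}}}[\|\u(t)-\v(t)\|_{\H}^2]$, then the application of the log-Harnack lemma \eqref{4.42}. As in \eqref{4p4}, given $\y\in\H$ I would introduce the controlled process $\v(\cdot)$ solving
\begin{equation*}
\left\{
\begin{aligned}
\d\v(t)+[\mu\A\v(t)+\B(\v(t))+\beta\mathcal{C}(\v(t))]\d t&=\sigma(\v(t))\d\W(t)+\frac{\mu\lambda_{N_0}}{2}(\u(t)-\v(t))^l\d t,\\
\v(0)&=\y,
\end{aligned}\right.
\end{equation*}
and rewrite this via Girsanov's theorem as the original equation \eqref{51} driven by $\widetilde\W(t)=\W(t)+\int_0^t h(s)\d s$ with $h(t)=\frac{\mu\lambda_{N_0}}{2}\sigma(\v(t))^{-1}(\u(t)-\v(t))^l$; Assumption \ref{ass4.9}(A2) guarantees $h$ is well-defined and that $\|h(t)\|_{\H}\le \frac{\wi K\mu\lambda_{N_0}}{2}\|\u(t)-\v(t)\|_{\H}$, so the Novikov-type condition needed for the change of measure will follow from the exponential integrability of $\|\u-\v\|_{\H}^2$ (which in turn follows from the coupling estimate itself, exactly as in the additive case where Lemma \ref{lem4.6} was used — here, because $\sigma$ is bounded, one can use a crude Gronwall bound instead).

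For the coupling estimate, set $\w=\u-\v$; then $\w$ satisfies
\begin{equation*}
\d\w(t)+\Big[\mu\A\w(t)+\B(\u(t))-\B(\v(t))+\beta(\mathcal{C}(\u(t))-\mathcal{C}(\v(t)))+\tfrac{\mu\lambda_{N_0}}{2}\w(t)^l\Big]\d t=(\sigma(\u(t))-\sigma(\v(t)))\d\W(t).
\end{equation*}
Applying the It\^o formula to $\|\w(t)\|_{\H}^2$ produces, in addition to the deterministic terms handled exactly as in \eqref{4.30}--\eqref{4.38}, the extra contributions $\Tr((\sigma(\u)-\sigma(\v))(\sigma(\u)-\sigma(\v))^*)\,\d t \le L\|\w\|_{\H}^2\,\d t$ from the quadratic variation (using Assumption \ref{ass4.9}(A1)) and a martingale term $2(\w,(\sigma(\u)-\sigma(\v))\d\W)$ with zero expectation. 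Combining the monotonicity bound \eqref{4.33} for $r>3$ (respectively \eqref{4.38} for $r=3$, $\beta\mu>1$) with the $L$-term and the spectral-gap gain $-\mu\lambda_{N_0}\|\w\|_{\H}^2$ from the control, taking expectations and applying Gronwall gives
\begin{equation*}
\mathbb{E}_{\widetilde{\mathbb{P}}}[\|\w(t)\|_{\H}^2]\le e^{-2\widehat\theta t}\|\x-\y\|_{\H}^2,\qquad
\widehat\theta=\tfrac12\Big(\mu\lambda_{N_0}-\tfrac{r-3}{\mu(r-1)}\big(\tfrac{2}{\beta\mu(r-1)}\big)^{\frac{2}{r-3}}-L\Big),
\end{equation*}
valid once $N_0$ is large enough that $\widehat\theta>0$; the $r=3$ case is identical with the bracketed term replaced by $L$ only.

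The final step mirrors the proof of Theorem \ref{maint}: using $\mathrm{P}_t\log f(\y)=\mathbb{E}_{\widetilde{\mathbb{P}}}[\log f(\v(t,\y))]$, the Lipschitz bound on $\log f$, the change-of-measure identity, and Lemma \eqref{4.42}, one gets $\mathrm{P}_t\log f(\y)\le \log\mathrm{P}_tf(\x)+\mathbb{E}[\Phi(t)\log\Phi(t)]+e^{-\widehat\theta t}\|\x-\y\|_{\H}\|\nabla\log f\|_{\infty}$, and the relative-entropy term is estimated by
\begin{equation*}
\mathbb{E}[\Phi(t)\log\Phi(t)]=\tfrac12\mathbb{E}_{\widetilde{\mathbb{P}}}\Big[\int_0^t\|h(s)\|_{\H}^2\d s\Big]\le \tfrac{\wi K^2\mu^2\lambda_{N_0}^2}{8}\int_0^\infty e^{-2\widehat\theta s}\d s\,\|\x-\y\|_{\H}^2=\widehat\gamma\|\x-\y\|_{\H}^2,
\end{equation*}
which yields \eqref{4b2} (and \eqref{4c2} analogously). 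The main obstacle I anticipate is not any single inequality but the interaction between the two uses of the coupling estimate: the Girsanov change of measure requires a priori control of $\int_0^t\|h\|_{\H}^2\d s$ under $\mathbb{P}$, yet the sharp decay bound on $\mathbb{E}_{\widetilde{\mathbb{P}}}[\|\w\|_{\H}^2]$ is most naturally proved under $\widetilde{\mathbb{P}}$; one must therefore first establish well-posedness of $\v(\cdot)$ and integrability of $\Phi(t)$ by a soft argument (boundedness of $\sigma$ makes $\|h\|_{\H}\lesssim\|\w\|_{\H}$ with a pathwise Gronwall bound on $\|\w\|_{\H}$ sufficing), and only afterwards derive the quantitative exponential decay — exactly the bookkeeping that Remark 8 of \cite{AKMS} and Lemma \ref{lem4.6} are invoked for in the additive case. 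A secondary technical point is checking that the It\^o formula for $\|\w\|_{\H}^2$ is rigorously justified given that $\w$ only lies in $\V\cap\wi\L^{r+1}$ with dual-space time derivative; this is inherited from the well-posedness theory of Theorem \ref{exis1}.
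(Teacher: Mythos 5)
There is a genuine gap, and it lies precisely in the choice of the coupling control. You add the drift $\frac{\mu\lambda_{N_0}}{2}(\u(t)-\v(t))^l\d t$ to the $\v$-equation and use the Girsanov shift $h(t)=\frac{\mu\lambda_{N_0}}{2}\sigma(\v(t))^{-1}(\u(t)-\v(t))^l$. With this choice the difference $\w=\u-\v$ indeed satisfies a clean equation driven by $(\sigma(\u)-\sigma(\v))\d\W$, but $\W$ is a Wiener process only under $\mathbb{P}$: taking expectations there gives decay of $\E_{\mathbb{P}}\|\w(t)\|_{\H}^2$, not of $\E_{\widetilde{\mathbb{P}}}\|\w(t)\|_{\H}^2$, which is what both of your later steps require (the entropy term $\E[\Phi\log\Phi]=\frac{1}{2}\E_{\widetilde{\mathbb{P}}}\int_0^t\|h(s)\|_{\H}^2\d s$ and the bound on $\E_{\widetilde{\mathbb{P}}}\|\u(t,\x)-\v(t,\y)\|_{\H}$). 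If you instead work under $\widetilde{\mathbb{P}}$, then $\d\W=\d\wi\W-h\,\d t$ and the $\w$-equation picks up the extra drift $-(\sigma(\u)-\sigma(\v))h$, whose pairing with $\w$ is only bounded by $\mu\lambda_{N_0}\wi{K}\sqrt{L}\,\|\w\|_{\H}^{3}$ by (A1)--(A2); this cubic term cannot be absorbed into the linear Gronwall structure, so the rate $2\widehat{\theta}$ and the constant $\widehat{\gamma}$ do not come out. This is exactly why the paper's coupling \eqref{459} adds the control $\frac{\mu\lambda_{N_0}}{2}\sigma(\v(t))\sigma(\u(t))^{-1}(\u(t)-\v(t))^l$ and shifts with $\wi h(t)=\frac{\mu\lambda_{N_0}}{2}\sigma(\u(t))^{-1}(\u(t)-\v(t))^l$: after the change of measure both equations are driven by $\wi\W$, the $\u$-equation acquires exactly the damping $-\frac{\mu\lambda_{N_0}}{2}(\u-\v)^l$, the $\v$-equation becomes the uncontrolled SCBF equation, and the It\^o estimate \eqref{4.65}--\eqref{4.68} for $\w$ is carried out entirely under $\widetilde{\mathbb{P}}_{T,N}$ with no cross term, which is what yields \eqref{5.27}--\eqref{5.28} with the stated constants.

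A secondary, fixable weakness is the justification of the change of measure itself. Your suggestion that "a crude pathwise Gronwall bound on $\|\w\|_{\H}$" suffices is not available: $\|\w\|_{\H}^2$ obeys an SDE with a nondegenerate stochastic integral, so there is no deterministic pathwise bound. The paper instead localizes with the stopping times $\tau_N=\inf\{t:\|\u(t)\|_{\H}+\|\v(t)\|_{\H}>N\}$, on which $\wi h$ is bounded so that Novikov's condition holds on $[0,T\wedge\tau_N]$, proves the decay under $\widetilde{\mathbb{P}}_{T,N}$, and then removes the localization via the martingale property of $\wi\Phi$ and Fatou's lemma (\eqref{470}--\eqref{472}). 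Once you adopt the paper's control term, your final entropy computation (and the resulting $\widehat{\theta}$, $\widehat{\gamma}$, including the $r=3$, $\beta\mu>1$ case) matches the paper's argument.
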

Analogous to Corollaries \ref{cor4.4} and \ref{cor4p4}, we have the following result for $n=2,3$ and $r\in(3,\infty)$. For the case $n=2,3$, $r=3$ and $\beta\mu>1$, one has to replace $\widehat{\theta}$ with $\frac{\mu\lambda_{N_0}-L}{2}$ and $\widehat{\gamma}$ with $ \frac{\widetilde{K}\mu^2\lambda_{N_0}^2}{8(\mu\lambda_{N_0}-L)}$ in the Corollary given below. 
\begin{corollary}\label{cor4a4} 
	Under the assumptions of Theorem \ref{maint}, we have 
	\begin{enumerate}
		\item [(1)] \textbf{Gradient estimate.} For any $\y\in\H$, $t>0$ and $f\in\mathrm{Lip}_b(\H):=\mathrm{Lip}(\H)\cap\mathscr{B}_b(\H),$ we have 
		\begin{align}
		\|\nabla\mathrm{P}_tf(\y)\|_{\H}\leq \sqrt{\widehat{\gamma}}\sqrt{\mathrm{P}_tf^2(\y)-(\mathrm{P}_tf)^2(\y)}+e^{-\widehat{\theta}t}\|\nabla f\|_{\infty}.
		\end{align}
		In particular, $\mathrm{P}_t$ is asymptotically strong Feller. 
		\item [(2)] \textbf{Asymptotic irreducibility.} Let $\y\in\H$ and $\A\subset\H$ be a measurable set such that $$\delta(\y,\A):= \liminf_{t\to\infty}\mathrm{P}_t(\y,\A)>0.$$ Then, we have $$\liminf_{t\to\infty}\mathrm{P}_t(\x,\A_{\e})>0, \ \text{ for any } \ \x\in\H, \ \e>0,$$ where $\A_{\e}:=\{\x\in\H:\rho(\x,\A)<\e\}$ with $\rho(\x,\A):=\inf\limits_{\z\in\A}\|\x-\z\|_{\H}.$ Furthermore, for any $\e_0\in(0,\delta(\y,\A))$, there exists a constant $t_0>0$ such that $$\mathrm{P}_t(\x,\A_{\e})>0, \  \text{ provided } \  t\geq t_0\ \text{ and }\ e^{-\widehat{\theta}t}\|\x-\y\|_{\H}<\e\e_0.$$ 
		\item [(3)] \textbf{Asymptotic heat kernel estimate.} If  $\mathrm{P}_t$ has an invariant probability measure $\nu$,  then for any $f\in\mathscr{B}_b^+(\H)$ with $\|\nabla f\|_{\infty}<\infty$, we have 
		$$\limsup_{t\to\infty}\mathrm{P}_tf(\y)\leq\log\left(\frac{\nu(e^{f})}{\int_{\H}e^{-\widehat{\gamma}\|\x-\y\|_{\H}^2}\nu(\d \x)}\right), \ \y\in\H. $$ Consequently, for any closed set $\A\subset\H$ with $\nu(\A)=0$, we have $$\lim_{t\to\infty}\mathrm{P}_t\chi_{\A}(\y)=0,\ \text{ for all }\ \y\in\H.$$
		\item [(4)] \textbf{Uniqueness of invariant probability measure.} The Markovian transition semigroup $\mathrm{P}_t$ has at most one invariant probability measure. 
	\end{enumerate}
\end{corollary}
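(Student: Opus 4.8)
The plan is to derive assertions (1)--(4) from the asymptotic log-Harnack inequality \eqref{4b2} (resp. \eqref{4c2}) established in Theorem \ref{maint2} by invoking \cite[Theorem 2.1]{JBFY}, once the existence of an invariant probability measure for the transition semigroup $\{\mathrm{P}_t\}_{t\geq 0}$ has been secured; the latter I would obtain by the standard Krylov--Bogoliubov argument. Put $\Theta(\x,\y):=\widehat{\gamma}\|\x-\y\|_{\H}^2$, $\Psi_t(\x,\y):=e^{-\widehat{\theta}t}\|\x-\y\|_{\H}$ when $n=2,3$ and $r\in(3,\infty)$, and $\Theta(\x,\y):=\frac{\widetilde{K}\mu^2\lambda_{N_0}^2}{8(\mu\lambda_{N_0}-L)}\|\x-\y\|_{\H}^2$, $\Psi_t(\x,\y):=e^{-\frac{\mu\lambda_{N_0}-L}{2}t}\|\x-\y\|_{\H}$ in the critical case $r=3$, $\beta\mu>1$. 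In both cases $\Theta$ and $\Psi_t$ are symmetric in $(\x,\y)$, and $\Psi_t\downarrow 0$ as $t\uparrow\infty$ because the standing conditions $\lambda_{N_0}>\frac{r-3}{\mu^2(r-1)}\big(\frac{2}{\beta\mu(r-1)}\big)^{\frac{2}{r-3}}+\frac{L}{\mu}$ (resp. $\lambda_{N_0}>\frac{L}{\mu}$) force the decay exponent to be positive. Hence \cite[Theorem 2.1]{JBFY} applies exactly as in the proofs of Corollary \ref{cor4.4} and Corollary \ref{cor4p4}, and delivers the gradient estimate (and therefore the asymptotic strong Feller property), the asymptotic irreducibility, the asymptotic heat kernel estimate, and the at-most-uniqueness of the invariant probability measure.

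It thus remains to exhibit one invariant probability measure. First I would verify the Feller property: for two strong solutions $\u(\cdot,\x),\u(\cdot,\y)$ of \eqref{51} set $\w:=\u(\cdot,\x)-\u(\cdot,\y)$; applying the It\^o formula to $\|\w(t)\|_{\H}^2$, controlling the convective and damping terms by \eqref{2.23} and \eqref{4.33} (resp. by the variant valid for $r=3$, $\beta\mu>1$), estimating the It\^o correction term through Assumption \ref{ass4.9}(A1), namely $\|\sigma(\u(t,\x))-\sigma(\u(t,\y))\|_{\mathcal{L}_2(\H)}^2\leq L\|\w(t)\|_{\H}^2$, and taking expectation so that the stochastic integral vanishes, one is led to
\begin{align*}
\frac{\d}{\d t}\E\big[\|\w(t)\|_{\H}^2\big]\leq\Big(\tfrac{r-3}{\mu(r-1)}\big(\tfrac{2}{\beta\mu(r-1)}\big)^{\frac{2}{r-3}}+L\Big)\E\big[\|\w(t)\|_{\H}^2\big]
\end{align*}
(with the bracket replaced by $L$ when $r=3$). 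Gronwall's inequality then gives $\E\|\u(t,\x)-\u(t,\y)\|_{\H}^2\leq e^{ct}\|\x-\y\|_{\H}^2$, from which a routine dominated-convergence argument shows that $\x\mapsto\mathrm{P}_t\varphi(\x)$ is continuous on $\H$ for every $\varphi\in\C_b(\H)$.

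Next I would prove tightness of the time averages $\nu_n:=\frac1n\int_0^n\delta_{\mathbf 0}\mathrm{P}_t\,\d t$. Taking expectation in the It\^o energy identity of Theorem \ref{exis1} with $\x=\mathbf 0$ and using the uniform Hilbert--Schmidt bound $C_0:=\sup_{\u\in\V}\|\sigma(\u)\|_{\mathcal{L}_2(\H)}^2<\infty$ from Assumption \ref{ass4.9}(A1) to dominate the trace term, one gets $2\mu\,\E\big[\int_0^t\|\u(s,\mathbf 0)\|_{\V}^2\,\d s\big]\leq C_0 t$, whence $\nu_n(\|\cdot\|_{\V}^2)\leq\frac{C_0}{2\mu}$ uniformly in $n$. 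Since the embedding $\V\hookrightarrow\H$ is compact, Chebyshev's inequality shows $\{\nu_n\}_{n\geq 1}$ is tight on $\H$; Prohorov's theorem extracts a subsequence with $\nu_{n_k}\xrightarrow{w}\nu$, and the Feller property together with the Krylov--Bogoliubov theorem (equivalently, the Chow--Khasminskii criterion \cite{CHKH}) makes $\nu$ invariant for $\{\mathrm{P}_t\}_{t\geq 0}$. Combined with the first paragraph this finishes the proof. The genuine difficulty lies not in the corollary but upstream, in Theorem \ref{maint2}: building the asymptotic coupling and justifying the Girsanov change of measure when the diffusion coefficient $\sigma(\cdot)$ itself depends on the unknown, and absorbing the additional It\^o correction $\|\sigma(\u)-\sigma(\v)\|_{\mathcal{L}_2(\H)}^2$ into the exponential decay estimate for $\E_{\widetilde{\mathbb{P}}}\|\u(t)-\v(t)\|_{\H}^2$ --- which is exactly where the constant $L$ enters $\widehat{\theta}$ and $\widehat{\gamma}$, and where parts (A1)--(A2) of Assumption \ref{ass4.9} are used essentially.
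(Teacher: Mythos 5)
Your proposal is correct and follows the same skeleton as the paper: define $\Theta(\x,\y)=\widehat{\gamma}\|\x-\y\|_{\H}^2$ and $\Psi_t(\x,\y)=e^{-\widehat{\theta}t}\|\x-\y\|_{\H}$ (with the stated replacements in the critical case), feed the asymptotic log-Harnack inequality of Theorem \ref{maint2} into Theorem 2.1 of \cite{JBFY}, and supply the missing ingredient, an invariant measure, by Krylov--Bogoliubov, where the Feller property is obtained exactly as in the paper via the It\^o formula for $\|\u(t,\x)-\u(t,\y)\|_{\H}^2$, the estimates \eqref{2.23} and \eqref{4.33}, the Lipschitz bound on $\sigma$, and Gronwall. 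The one genuine divergence is the tightness step: you bound the It\^o correction by the uniform constant $C_0=\sup_{\u\in\V}\|\sigma(\u)\|_{\mathcal{L}_2(\H)}^2$ from Assumption \ref{ass4.9} (A1) (which extends to all of $\H$ by Lipschitz continuity and density, and in any case suffices since $\u(s)\in\V$ for a.e.\ $s$), giving $\nu_n(\|\cdot\|_{\V}^2)\leq \frac{C_0}{2\mu}$ directly; the paper instead uses only the linear-growth consequence of the Lipschitz condition, $\|\sigma(\u)\|_{\mathcal{L}_2(\H)}^2\leq C(1+L\|\u\|_{\H}^2)$, and must then absorb the term $CL\int_0^t\|\u(s)\|_{\H}^2\d s$ into the dissipation via Poincar\'e, which forces the extra smallness conditions $L<\frac{2\mu\lambda_1}{C}$ (respectively $L<\mu\lambda_1$ in the critical case) appearing in \eqref{5.34} and the final display of the paper's proof. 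Your route is therefore slightly cleaner at this point, as it exploits the boundedness already postulated in (A1) and dispenses with any additional restriction on $L$ for the tightness of $\{\nu_n\}$; the paper's variant would be the one to keep if one wished to drop the uniform boundedness of $\sigma$ and retain only Lipschitz (linear growth). Everything else, including the martingale/zero-expectation justification and the passage from $\nu_{n_k}\xrightarrow{w}\nu$ to invariance, matches the paper's argument.
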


As in the case of  additive Gaussian noise, we use the coupling method to prove Theorem \ref{maint2}. Given a $\y\in\H$, we consider the following systems:
\begin{equation}\label{459}
\left\{
\begin{aligned}
\d\u(t)&=-[\mu \A\u(t)+\B(\u(t))+\beta\mathcal{C}(\u(t))]\d t+\sigma(\u(t))\d\W(t),\ \u(0)=\x\in\H, \\
\d\v(t)&=-[\mu\A\v(t)+\B(\v(t))+\beta\mathcal{C}(\v(t))]\d t+\sigma(\v(t))\d\W(t)\\&\quad+\frac{\mu\lambda_{N_0}}{2}\sigma(\v(t))\sigma(\u(t))^{-1}(\u(t)-\v(t))^l\d t,\ \v(0)=\y\in\H. 
\end{aligned}
\right. 
\end{equation} 
Under the Assumption \ref{ass4.9} (A1) and (A2), one can easily see that the additional drift term $\sigma(\v(t))\sigma(\u(t))^{-1}(\u(t)-\v(t))^l$ satisfies the local monotonicity as well as hemicontinuity properties. Hence the coupling $(\u(\cdot),\v(\cdot))$ is well-defined. In order to investigate that $\v(t)$ has a transition semigroup $\mathrm{P}_t$ under the weighted probability measure $\widetilde{\mathbb{P}}$, we let 
\begin{align}
\widetilde{\W}(t):=\W(t)+\int_0^t\wi h(s)\d s, \ \wi h(t):=\frac{\mu\lambda_{N_0}}{2}\sigma(\u(t))^{-1}(\u(t)-\v(t))^l, \ t\geq 0. 
\end{align}
We also define 
\begin{align}
\wi\Phi(t):=\exp\left\{-\int_0^t(\wi h(s),\d\W(s))-\frac{1}{2}\int_0^t\|\wi h(s)\|_{\H}^2\d s\right\}, \ t\geq 0. 
\end{align}
Then, we have the following result. 

\begin{lemma}
	For $n=2,3$ and $r\in(3,\infty)$, $\lambda_{N_0}>\frac{r-3}{\mu^2(r-1)}\left(\frac{2}{\beta\mu (r-1)}\right)^{\frac{2}{r-3}}+\frac{L}{\mu},$  and any $T>0$, we have 
	\begin{align}\label{4.62}
	\E\left[\wi\Phi(t)\log\wi\Phi(t)\right]\leq \frac{\mu^2\lambda_{N_0}^2\widetilde{K}^2}{8[\mu\lambda_{N_0}-(\upeta+L)]} \|\x-\y\|_{\H}^2, \ \text{ for all }\ 0\leq t\leq T,
	\end{align}
	where $\upeta=\frac{r-3}{\mu(r-1)}\left(\frac{2}{\beta\mu (r-1)}\right)^{\frac{2}{r-3}}$. For $r=3$, $\mu\beta>1$ and $\lambda_{N_0}>\frac{L}{\mu}$, we have 
	\begin{align}\label{462}
	\E\left[\wi\Phi(t)\log\wi\Phi(t)\right]\leq \frac{\mu^2\lambda_{N_0}^2\widetilde{K}^2}{8[\mu\lambda_{N_0}-L]} \|\x-\y\|_{\H}^2, \ \text{ for all }\  0\leq t\leq T.
	\end{align}
\end{lemma}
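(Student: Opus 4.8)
The plan is to follow the same scheme used in the additive-noise case (proof of Theorem \ref{maint1} together with Lemma \ref{lem4p6}), adapting it to account for the fact that the driving process $\sigma(\u)$ is now state-dependent. First I would apply Girsanov's theorem to identify $\widetilde{\W}(\cdot)$ as a cylindrical Wiener process under $\widetilde{\mathbb{P}}$ and observe, exactly as in the computation leading to \eqref{4.45}, that
\begin{align*}
\E\left[\wi\Phi(t)\log\wi\Phi(t)\right]=\E_{\widetilde{\mathbb{P}}}\left[\log\wi\Phi(t)\right]=\frac12\,\E_{\widetilde{\mathbb{P}}}\left[\int_0^t\|\wi h(s)\|_{\H}^2\d s\right]=\frac{\mu^2\lambda_{N_0}^2}{8}\,\E_{\widetilde{\mathbb{P}}}\left[\int_0^t\|\sigma(\u(s))^{-1}(\u(s)-\v(s))^l\|_{\H}^2\d s\right].
\end{align*}
Using Assumption \ref{ass4.9}(A2), namely $\|\sigma(\u)^{-1}\|_{\mathcal{L}(\mathrm{P}_{N_0}\H)}\leq\wi K$, this is bounded by $\frac{\mu^2\lambda_{N_0}^2\wi K^2}{8}\int_0^t\E_{\widetilde{\mathbb{P}}}\left[\|\u(s)-\v(s)\|_{\H}^2\right]\d s$. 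So the whole problem reduces to an exponential-in-time decay estimate for $\E_{\widetilde{\mathbb{P}}}\|\w(t)\|_{\H}^2$, where $\w:=\u-\v$.

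To get that decay I would write the It\^o SDE satisfied by $\w(\cdot)$: subtracting the two equations in \eqref{459}, the drift picks up $-[\mu\A\w+\B(\u)-\B(\v)+\beta(\mathcal{C}(\u)-\mathcal{C}(\v))]-\frac{\mu\lambda_{N_0}}{2}\w^l$ and the diffusion picks up $\sigma(\u)-\sigma(\v)$ (the cross term $\sigma(\v)\sigma(\u)^{-1}(\u-\v)^l$ in the $\v$-equation cancels against part of the control term after the change of measure; more precisely, under $\widetilde{\mathbb{P}}$ the $\v$-equation is \eqref{459} driven by $\widetilde{\W}$, so the effective diffusion in $\d\w$ is $\sigma(\u(t))\,\d\W(t)-\sigma(\v(t))\,\d\widetilde{\W}(t)=(\sigma(\u(t))-\sigma(\v(t)))\,\d\widetilde{\W}(t)$). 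Applying the infinite-dimensional It\^o formula to $\|\w(t)\|_{\H}^2$ and taking $\E_{\widetilde{\mathbb{P}}}$, the martingale term drops and the trace term contributes $\E_{\widetilde{\mathbb{P}}}\int_0^t\|\sigma(\u(s))-\sigma(\v(s))\|_{\mathcal{L}_2(\H)}^2\d s\leq L\int_0^t\E_{\widetilde{\mathbb{P}}}\|\w(s)\|_{\H}^2\d s$ by Assumption \ref{ass4.9}(A1). For the deterministic part I would reuse verbatim the estimates from the proof of the previous Lemma: the monotonicity bound \eqref{2.23} for $\mathcal{C}$, the splitting $\B(\u)-\B(\v)=\B(\w,\u)$ with the interpolation estimate \eqref{4.31}--\eqref{4.32}, giving the bound $\upeta\|\w\|_{\H}^2$ with $\upeta=\frac{r-3}{\mu(r-1)}\big(\frac{2}{\beta\mu(r-1)}\big)^{\frac{2}{r-3}}$ for $r>3$, and the spectral-gap estimate \eqref{4.24} that turns $\mu\|\w\|_{\V}^2+\mu\lambda_{N_0}\|\mathrm{P}_{N_0}\w\|_{\H}^2$ into $\mu\lambda_{N_0}\|\w\|_{\H}^2$; for $r=3$ with $\beta\mu>1$ one uses \eqref{4.36}--\eqref{424} instead, which eliminates the $\upeta$ term. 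Collecting terms in differential form yields
\begin{align*}
\frac{\d}{\d t}\E_{\widetilde{\mathbb{P}}}\|\w(t)\|_{\H}^2+\big(\mu\lambda_{N_0}-(\upeta+L)\big)\E_{\widetilde{\mathbb{P}}}\|\w(t)\|_{\H}^2\leq 0,
\end{align*}
and under the standing hypothesis $\lambda_{N_0}>\frac{r-3}{\mu^2(r-1)}\big(\frac{2}{\beta\mu(r-1)}\big)^{\frac{2}{r-3}}+\frac{L}{\mu}$ (respectively $\lambda_{N_0}>\frac{L}{\mu}$ when $r=3$) the coefficient is positive, so Gronwall gives $\E_{\widetilde{\mathbb{P}}}\|\w(t)\|_{\H}^2\leq e^{-(\mu\lambda_{N_0}-\upeta-L)t}\|\x-\y\|_{\H}^2$. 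Plugging this into the time integral above and computing $\int_0^\infty e^{-(\mu\lambda_{N_0}-\upeta-L)s}\d s=\frac{1}{\mu\lambda_{N_0}-(\upeta+L)}$ produces exactly \eqref{4.62}, and the analogous computation with $\upeta$ removed gives \eqref{462}.

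The main obstacle I anticipate is purely technical rather than conceptual: justifying that the It\^o formula for $\|\w(t)\|_{\H}^2$ is legitimate for the difference process under the change of measure, i.e. that $\w$ lies in the right functional class ($\mathrm{L}^2$ in time with values in $\V\cap\wi\L^{r+1}$, paths continuous in $\H$) so that all the pairings $\langle\B(\w,\u),\w\rangle$, $\langle\mathcal{C}(\u)-\mathcal{C}(\v),\w\rangle$ make sense and the stochastic integral against $\widetilde\W$ is a genuine martingale under $\widetilde{\mathbb{P}}$; this requires the regularity from Theorem \ref{exis1} for both $\u$ and $\v$ and the fact, noted after \eqref{459}, that the extra drift is locally monotone and hemicontinuous so that $\v$ is a well-defined strong solution. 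A secondary point is that one must argue the new cross term $\frac{\mu\lambda_{N_0}}{2}\sigma(\v)\sigma(\u)^{-1}\w^l$ in the $\v$-equation, viewed together with $\sigma(\v)\d\W$, reorganizes into $\sigma(\v)\d\widetilde\W$; this is immediate from the definition of $\widetilde\W$ and $\wi h$, but it is the step that makes the difference-diffusion depend only on $\sigma(\u)-\sigma(\v)$, which is what lets the Lipschitz bound $L$ enter rather than the (merely bounded) size of $\sigma$ itself.
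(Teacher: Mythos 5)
Your overall route is the paper's own: reduce the entropy $\E[\wi\Phi(t)\log\wi\Phi(t)]$ to $\frac{\mu^2\lambda_{N_0}^2\wi K^2}{8}\int_0^t\E_{\widetilde{\mathbb{P}}}\|\u(s)-\v(s)\|_{\H}^2\d s$ via Assumption \ref{ass4.9}(A2), rewrite the coupling under the new measure so the difference process is driven by $\sigma(\u)-\sigma(\v)$, and run the It\^o--Gronwall argument with the bounds \eqref{4.33}, \eqref{4.24} (resp.\ \eqref{4.36}--\eqref{424} for $r=3$) plus the Lipschitz trace term $L\|\w\|_{\H}^2$. However, there is a genuine gap at the very first step: you invoke Girsanov's theorem globally to declare $\wi\W$ a cylindrical Wiener process under $\widetilde{\mathbb{P}}$, and in particular you use $\E[\wi\Phi(t)\log\wi\Phi(t)]=\E_{\widetilde{\mathbb{P}}}[\log\wi\Phi(t)]$, which presupposes that $\wi\Phi$ is a true martingale with $\E[\wi\Phi(t)]=1$. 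This is not automatic here: $\wi h(t)=\frac{\mu\lambda_{N_0}}{2}\sigma(\u(t))^{-1}(\u(t)-\v(t))^l$ is controlled only by $\|\u(t)-\v(t)\|_{\H}$, which is not a priori bounded, and unlike the additive-noise case there is no analogue of Lemma \ref{lem4.6} providing exponential moments with which to check Novikov's condition. Your ``main obstacle'' paragraph locates the difficulty in the functional class of $\w$ for the It\^o formula, but the real technical content of the lemma is precisely the justification of the change of measure.

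The paper resolves this by localization: it introduces the stopping times $\tau_N:=\inf\{t:\|\u(t)\|_{\H}+\|\v(t)\|_{\H}>N\}$, on $[0,T\wedge\tau_N]$ verifies Novikov's condition directly (there $\|\wi h\|_{\H}$ is bounded by $C N$), applies Girsanov to get the measure $\widetilde{\mathbb{P}}_{T,N}=\wi\Phi(T\wedge\tau_N)\mathbb{P}$, and carries out your It\^o--Gronwall computation under $\widetilde{\mathbb{P}}_{T,N}$ — note that the assertion that the stochastic integral ``drops'' after taking expectation is also legitimate only on the stopped interval, where it is a local martingale with zero expectation. This yields the entropy bound uniformly in $N$ on $[0,t\wedge\tau_N]$. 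One then shows that $\{\wi\Phi(t)\}_{t\in[0,T]}$ is a genuine martingale by dominated convergence applied to the stopped martingales $\wi\Phi(t\wedge\tau_N)$, defines $\widetilde{\mathbb{P}}_T=\wi\Phi(T)\mathbb{P}$, and removes the localization with Fatou's lemma, since $\E_{\widetilde{\mathbb{P}}_T}\big[\int_0^{t}\|\wi h(s)\|_{\H}^2\d s\big]\le\liminf_{N\to\infty}\E_{\widetilde{\mathbb{P}}_{T,N}}\big[\int_0^{t\wedge\tau_N}\|\wi h(s)\|_{\H}^2\d s\big]$. Without this stopping-time and limiting argument (or some substitute giving exponential integrability of $\int_0^t\|\wi h(s)\|_{\H}^2\d s$), your opening chain of identities is unjustified; with it, the rest of your computation goes through exactly as in the paper.
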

\begin{proof}
	For any $N\geq \|\x\|_{\H}+\|\y\|_{\H}$, we define a sequence of stopping times by 
	\begin{align}
	\tau_N:=\inf_{t\geq 0}\left\{t: \|\u(t)\|_{\H}+\|\v(t)\|_{\H}>N\right\}. 
	\end{align}
	From the definition of $\tau_N$, it is clear that $\tau_N\uparrow\infty$ as $N\uparrow\infty$, and $\{\|\u(t)\|_{\H}\}_{t\in[0,T\wedge\tau_N]}$ and $\{\|\v(t)\|_{\H}\}_{t\in[0,T\wedge\tau_N]}$ are bounded. The Assumption \ref{ass4.9} (A2) implies 
	\begin{align*}
	\E\left[\int_0^{\T}\|\wi h(t)\|_{\H}^2\d t\right]&\leq \frac{\mu^2\lambda_{N_0}^2}{4}\E\left[\int_0^{\T}\|\sigma(\u(t))^{-1}\|_{\mathcal{L}(\mathrm{P}_{N_0}(\H))}^2\|(\u(t)-\v(t))^l\|_{\mathrm{P}_{N_0}(\H)}^2\d t\right]\nonumber\\&\leq\frac{\mu^2\lambda_{N_0}^2\wi K^2}{2}\left\{\E\left[\int_0^{\T}\|\u(t)\|_{\H}^2\d t\right]+\E\left[\int_0^{\T}\|\v(t)\|_{\H}^2\d t\right]\right\}\nonumber\\&\leq {\mu^2\lambda_{N_0}^2\wi K^2N^2T}<\infty, 
	\end{align*}
	 which implies Novikov's condition on $[0,T\wedge\tau_N]$. Then, applying  Girsanov’s theorem, we infer that $\{\wi\W(t)\}_{t\in[0,\T]}$  is a cylindrical Wiener process under the weighted probability measure $\widetilde{\mathbb{P}}_{T,N}=\wi\Phi(\T)\mathbb{P}$. Thus, we rewrite the coupling $(\u(\cdot),\v(\cdot))$ given in \eqref{459} as 
	\begin{equation}\label{464}
	\left\{
	\begin{aligned}
	\d\u(t)&=-\left[\mu \A\u(t)+\B(\u(t))+\beta\mathcal{C}(\u(t))+\frac{\mu\lambda_{N_0}}{2}(\u(t)-\v(t))^l\right]\d t\\&\quad+\sigma(\u(t))\d\wi\W(t), \ t\in[0,\T], \\
	\d\v(t)&=-[\mu\A\v(t)+\B(\v(t))+\beta\mathcal{C}(\v(t))]\d t+\sigma(\v(t))\d\wi\W(t), \ t\in[0,\T], 
	\end{aligned}
	\right. 
	\end{equation} 
	with the corresponding initial values $\u(0)=\x$ and $\v(0)=\y$. Using the infinite dimensional It\^o formula to the process $\|\w(\cdot)\|_{\H}^2=\|\u(\cdot)-\v(\cdot)\|_{\H}^2$ under the probability measure $\widetilde{\mathbb{P}}_{T,N}$ and for $t\in[0,\T]$, we obtain 
	\begin{align}\label{4.65}
&	\|\w(t)\|_{\H}^2+2\mu\int_0^t\|\w(s)\|_{\V}^2\d s\nonumber\\&=\|\w(0)\|_{\H}^2-2\int_0^t\langle\B(\u(s))-\B(\v(s)),\w(s)\rangle\d s-2\beta\int_0^t\langle\mathcal{C}(\u(s))-\mathcal{C}(\v(s)),\w(s)\rangle\d s\nonumber\\&\quad+\mu\lambda_{N_0}\int_0^t((\w(s))^l,\w(s))\d s+\int_0^t\|\sigma(\u(s))-\sigma(\v(s))\|_{\mathcal{L}_2(\H)}^2\d s\nonumber\\&\quad+2\int_0^t((\sigma(\u(s))-\sigma(\v(s)))\d\wi\W(s),\w(s)),\ \widetilde{\mathbb{P}}_{T,N}\text{-a.s.},
	\end{align}
	for all $t\in[0,\T]$. For $n=2,3$ and $r(3,\infty)$, using \eqref{4.24} and \eqref{4.33} in \eqref{4.65}, we find 
	\begin{align}\label{4.66} 
	&\|\w(t)\|_{\H}^2+\mu\lambda_{N_0}\int_0^t\|\w(s)\|_{\H}^2\d s\nonumber\\&\leq\|\x-\y\|_{\H}^2+\frac{r-3}{\mu(r-1)}\left(\frac{2}{\beta\mu (r-1)}\right)^{\frac{2}{r-3}}\int_0^t\|\w(s)\|_{\H}^2\d s+L\int_0^t\|\w(s)\|_{\H}^2\d s\nonumber\\&\quad+2\int_0^t((\sigma(\u(s))-\sigma(\v(s)))\d\wi\W(s),\w(s)), 
	\end{align}
		for all $t\in[0,\T]$. Taking expectation in \eqref{4.66} and using the fact that final term appearing in the right hand side of the inequality \eqref{4.66} is a local martingale with zero expectation, we deduce that
		\begin{align}
	&\E_{\widetilde{\mathbb{P}}_{T,N}}\left[\|\w(t)\|_{\H}^2\right]\nonumber\\&\leq\|\x-\y\|_{\H}^2+\left[\frac{r-3}{\mu(r-1)}\left(\frac{2}{\beta\mu (r-1)}\right)^{\frac{2}{r-3}}+L-\mu\lambda_{N_0}\right]\int_0^t\E_{\widetilde{\mathbb{P}}_{T,N}}\left[\|\w(s)\|_{\H}^2\right]\d s,	
		\end{align}
		where we used \eqref{4.24} and Fubini's theorem. Applying Gronwall's inequality, we get 
		\begin{align}\label{4.68}
		\E_{\widetilde{\mathbb{P}}_{T,N}}\left[\|\w(t)\|_{\H}^2\right]\leq\|\x-\y\|_{\H}^2e^{-[\mu\lambda_{N_0}-(\upeta+L)]t}, \ \text{ for all }\ t\in[0,\T],
		\end{align}
		where $\upeta=\frac{r-3}{\mu(r-1)}\left(\frac{2}{\beta\mu (r-1)}\right)^{\frac{2}{r-3}}$. Using the definition of 
		$\wi\Phi(\cdot)$, $\wi\W(\cdot)$ and the Assumption \ref{ass4.9} (A2), we have 
		\begin{align}\label{4.69}
		\E\left[\wi\Phi(\t)\log\wi\Phi(\t)\right]&=\E_{\widetilde{\mathbb{P}}_{T,N}}\left[\log\wi\Phi(\t)\right]\nonumber\\&=\frac{1}{2}\E_{\widetilde{\mathbb{P}}_{T,N}}\left[\int_0^{\t}\|\wi h(s)\|_{\H}^2\d s\right]\nonumber\\&\leq\frac{\mu^2\lambda_{N_0}^2\widetilde{K}^2}{8}\int_0^{t}\E_{\widetilde{\mathbb{P}}_{T,N}}\left[\|\u(s)-\v(s)\|_{\H}^2\right]\d s,
		\end{align}
		where we used  Fubini's theorem in the final step. For $\mu\lambda_{N_0}>\upeta+L$, using \eqref{4.68} in \eqref{4.69}, we deduce that 
		\begin{align}\label{470}
			\E\left[\wi\Phi(\t)\log\wi\Phi(\t)\right]&\leq \frac{\mu^2\lambda_{N_0}^2\widetilde{K}^2}{8[\mu\lambda_{N_0}-(\upeta+L)]}\|\x-\y\|_{\H}^2, \ 0\leq t\leq T. 
			\end{align}
		For $0\leq s<t\leq T$, using the dominated convergence theorem and the martingale property of $\{\wi\Phi(\t)\}_{t\in[0,T]}$, we get 
		\begin{align}
		\E\left(\wi\Phi(t)\big|\mathscr{F}_s\right)=\E\left(\lim_{n\to\infty}\wi\Phi(\t)\big|\mathscr{F}_s\right)=\lim_{n\to\infty}\E\left(\wi\Phi(\t)\big|\mathscr{F}_s\right)=\wi\Phi(s), 
		\end{align} 
		so that $\{\wi\Phi(t)\}_{t\in[0,T]}$ is a martingale. Let us define $\widetilde{\mathbb{P}}_T:=\wi\Phi(T)\mathbb{P}$. Since $\{\wi\Phi(t)\}_{t\in[0,T]}$ is a martingale, we obtain $\widetilde{\mathbb{P}}_T(\A)=\widetilde{\mathbb{P}}_{T,N}(\A)$, if $\A\in\mathscr{F}_{T\wedge\tau_N}$. Note also that $\{\wi\W(t)\}_{t\in[0,T]}$ is a cylindrical Wiener process under the probability measure $\widetilde{\mathbb{P}}_T$. Applying Fatou’s lemma, we obtain 
		\begin{align}\label{472}
		\liminf_{N\to\infty}\E_{\widetilde{\mathbb{P}}_{T,N}}\left[\log\wi\Phi(\t)\right]&=	\liminf_{N\to\infty}\E_{\widetilde{\mathbb{P}}_{T}}\left[\log\wi\Phi(\t)\right]\nonumber\\&=\liminf_{N\to\infty}\frac{1}{2}\E_{\widetilde{\mathbb{P}}_{T}}\left[\int_0^{\t}\|\wi h(s)\|_{\H}^2\d s\right]\nonumber\\&\geq\frac{1}{2}\E_{\widetilde{\mathbb{P}}_{T}}\left[\liminf_{N\to\infty}\int_0^{\t}\|\wi h(s)\|_{\H}^2\d s\right]=\frac{1}{2}\E_{\widetilde{\mathbb{P}}_{T}}\left[\int_0^{t}\|\wi h(s)\|_{\H}^2\d s\right],
		\end{align}
		for any $t\in[0,T]$. Combining \eqref{470} and \eqref{472}, we further have 
		\begin{align}
		\E\left[\wi\Phi(t)\log\wi\Phi(t)\right]&=\E_{\widetilde{\mathbb{P}}_{T}}\left[\log\wi\Phi(t)\right]=\frac{1}{2}\E_{\widetilde{\mathbb{P}}_{T}}\left[\int_0^{t}\|\wi h(s)\|_{\H}^2\d s\right]\nonumber\\&\leq \liminf_{N\to\infty}\E_{\widetilde{\mathbb{P}}_{T,N}}\left[\log\wi\Phi(\t)\right]\nonumber\\&\leq \frac{\mu^2\lambda_{N_0}^2\widetilde{K}^2}{8[\mu\lambda_{N_0}-(\upeta+L)]}\|\x-\y\|_{\H}^2, \ 0\leq t\leq T, 
		\end{align}
		which completes the proof of \eqref{4.62}. 
		
		For $r=3$ and $\beta\mu>1$, calculation similar to \eqref{424} and \eqref{4.38} gives 
			\begin{align}\label{4.75}
		&\E_{\widetilde{\mathbb{P}}_{T,N}}\left[	\|\w(t)\|_{\H}^2\right]+(\mu\lambda_{N_0}-L)\int_0^t\E_{\widetilde{\mathbb{P}}_{T,N}}\left[\|\w(s)\|_{\H}^2\right]\d s\leq\|\x-\y\|_{\H}^2,
		\end{align}
		for all $t\in[0,\T]$ and the estimate \eqref{462} follows for $\mu>\frac{L}{\lambda_{N_0}}$, by proceeding similarly as in the previous case. 
\end{proof}

\begin{proof}[Proof of Theorem \ref{maint2}]
	For $n=2,3$ and $r\in(3,\infty)$, one can obtain a proof  in a similar way as that of the proof of Theorems \ref{maint} and \ref{maint1}. From \eqref{4.62}, we have the estimate  
	\begin{align}\label{5.27}
	\E\left[\wi\Phi(t)\log\wi\Phi(t)\right]\leq \frac{\wi{K}^2\mu^2\lambda_{N_0}^2}{8\left[\mu\lambda_{N_0}-\left(\frac{r-3}{\mu(r-1)}\left(\frac{2}{\beta\mu (r-1)}\right)^{\frac{2}{r-3}}+L\right)\right]} \|\x-\y\|_{\H}^2,
	\end{align}
	for $\mu\lambda_{N_0}>\frac{r-3}{\mu(r-1)}\left(\frac{2}{\beta\mu (r-1)}\right)^{\frac{2}{r-3}}+L$. Using  similar arguments to \eqref{4.68}, we can get the following estimate:
	\begin{align}\label{5.28}
	\E_{\widetilde{\mathbb{P}}_t}\left[\|\u(t,\x)-\v(t,\x)\|_{\H}^2\right]\leq \|\x-\y\|_{\H}^2e^{-\left[\mu\lambda_{N_0}-\left(\frac{r-3}{\mu(r-1)}\left(\frac{2}{\beta\mu (r-1)}\right)^{\frac{2}{r-3}}+L\right)\right]t}, \ \text{ for all }\ t\in[0,T]. 
	\end{align}
	Hence, for any $f\in\mathscr{B}_b^+(\H)$ with $\|\nabla\log f\|_{\infty}<\infty$, using \eqref{4.42}, \eqref{5.27} and \eqref{5.28}, we have 
	\begin{align}
	\mathrm{P}_t[\log f(\y)]&=  \E_{\widetilde{\mathbb{P}}_t}\left[\log f(\u(t,\x))\right]+\E_{\widetilde{\mathbb{P}}_t}\left[\log f(\v(t,\y))-\log f(\u(t,\x))\right]\nonumber\\&\leq \E\left[\wi{\Phi}(t)\log f(\u(t,\x))\right]+\|\nabla\log f\|_{\infty}\E_{\widetilde{\mathbb{P}}_t}\left[\|\u(t,\x)-\v(t,\x)\|_{\H}\right]\nonumber\\&\leq 	\E\left[\wi\Phi(t)\log\wi\Phi(t)\right]+\log\mathrm{P}_tf(\x)+\|\nabla\log f\|_{\infty}\left(\E_{\widetilde{\mathbb{P}}_t}\left[\|\u(t,\x)-\v(t,\x)\|_{\H}^2\right]\right)^{1/2} \nonumber\\&\leq \frac{\wi{K}^2\mu^2\lambda_{N_0}^2}{8\left[\mu\lambda_{N_0}-\left(\frac{r-3}{\mu(r-1)}\left(\frac{2}{\beta\mu (r-1)}\right)^{\frac{2}{r-3}}+L\right)\right]} \|\x-\y\|_{\H}^2+\log\mathrm{P}_tf(\x)\nonumber\\&\quad+ e^{-\frac{1}{2}\left[\mu\lambda_{N_0}-\left(\frac{r-3}{\mu(r-1)}\left(\frac{2}{\beta\mu (r-1)}\right)^{\frac{2}{r-3}}+L\right)\right]t}\|\x-\y\|_{\H}, 
	\end{align}
	and we obtain estimate \eqref{4b2}. 
	
	For $n=2,3$, $r=3$ and $\beta\mu> 1$, the estimate \eqref{5.27}  needs to be replaced by 
	\begin{align}
	\E\left[\Phi(t)\log\Phi(t)\right]\leq \frac{\widetilde{K}\mu^2\lambda_{N_0}^2}{8(\mu\lambda_{N_0}-L)} \|\x-\y\|_{\H}^2,
	\end{align}
	and the estimate \eqref{4c2} follows. 
\end{proof}

\begin{proof}[Proof of Corollary \ref{cor4a4}]
	For the case $n=2,3$ and $r\in(3,\infty)$, we define 
	\begin{align*}\Theta(\x,\y)=\widehat\gamma \|\x-\y\|_{\H}^2\ \text{  and }\ \Psi_t(\x,\y)=e^{-\widehat{\theta}t}\|\x-\y\|_{\H}. \end{align*}  In order to prove the  existence of an invariant measure for  the transition semigroup $\{\mathrm{P}_t\}_{t\geq 0},$ we can follow similar arguments as in the proof of Corollary \ref{cor4p4}.  For $n=2,3$ and $r\in(3,\infty)$, using the infinite dimensional It\^o formula to the process $\|\u(\cdot,\x)-\u(\cdot,\y)\|_{\H}^2$, we obtain 
	\begin{align}
&	\|\u(t,\x)-\u(t,\y)\|_{\H}^2+2\mu\int_0^t\|\u(s,\x)-\u(s,\y)\|_{\V}^2\d s\nonumber\\&=\|\x-\y\|_{\H}^2-2\int_0^t\langle\B(\u(s,\x))-\B(\u(s,\y)),\u(s,\x)-\u(s,\y)\rangle\d s\nonumber\\&\quad-2\beta\int_0^t\langle\mathcal{C}(\u(s,\x))-\mathcal{C}(\u(s,\y)),\u(s,\x)-\u(s,\y)\rangle\d s\nonumber\\&\leq \|\x-\y\|_{\H}^2+\left[\frac{r-3}{\mu(r-1)}\left(\frac{2}{\beta\mu (r-1)}\right)^{\frac{2}{r-3}}+L\right]\int_0^t\|\u(s,\x)-\u(s,\y)\|_{\H}^2\d s\nonumber\\&\quad+2\int_0^t((\sigma(\u(s,\x))-\sigma(\u(s,\y)))\d\W(s),\u(s,\x)-\u(s,\y)), 
	\end{align}
$\mathbb{P}$-a.s.,	for all $t\in[0,T]$, where we used a calculation similar to \eqref{4.66}. Taking expectation, and then by using Fubini's theorem and Gronwall’s lemma, we get 
\begin{align}
\E\left[\|\u(t,\x)-\u(t,\y)\|_{\H}^2\right]\leq \|\x-\y\|_{\H}^2e^{\left[\frac{r-3}{\mu(r-1)}\left(\frac{2}{\beta\mu (r-1)}\right)^{\frac{2}{r-3}}+L\right]t}, 
\end{align}
for all $t\in[0,T]$, which implies that $\{\mathrm{P}_t\}_{t\geq 0}$ is a Feller semigroup.  Therefore, for the existence of an invariant measure, we only need to prove the tightness of $\{\nu_n:n\geq 1\}$. Using the infinite dimensional It\^o formula to the process $\|\u(\cdot)\|_{\H}^2$, we find 
\begin{align}\label{5.33}
&\|\u(t)\|_{\H}^2+2\mu\int_0^t\|\u(s)\|_{\V}^2\d s+2\beta\int_0^t\|\u(s)\|_{\wi\L^{r+1}}^{r+1}\d s\nonumber\\&=\|\x\|_{\H}^2+\int_0^t\|\sigma(\u(s))\|_{\mathcal{L}_2(\H)}^2\d s+2\int_0^t(\sigma(\u(s))\d\W(s),\u(s))\nonumber\\&\leq\|\x\|_{\H}^2+C\int_0^t(1+L\|\u(s)\|_{\H}^2)\d s+2\int_0^t(\sigma(\u(s))\d\W(s),\u(s)), 
\end{align}
$\mathbb{P}$-a.s.,	for all $t\in[0,T]$. Thus, from \eqref{5.33}, it is immediate that 
\begin{align}\label{5.34}
&\|\u(t)\|_{\H}^2\leq\|\x\|_{\H}^2+\int_0^t\left[C+\left(-2\mu+\frac{CL}{\lambda_1}\right)\|\u(s)\|_{\H}^2\right]\d s+2\int_0^t(\sigma(\u(s))\d\W(s),\u(s)).
\end{align}
For $L<\frac{2\mu\lambda_1}{C}$,  proceeding similarly in the proof of Corollary \ref{cor4.4}, we get the required result. 
	
	For the case $n=2,3$, $r=3$ and $\beta\mu>1$ and $L<\mu\lambda_1$, we obtain 
	\begin{align}
\E\left[	\|\u(t,\x)-\u(t,\y)\|_{\H}^2\right]\leq e^{-(\mu\lambda_1-L) t}\|\x-\y\|_{\H}^2,
	\end{align}
	and the existence of an invariant measure for the transition semigroup $\{\mathrm{P}_t\}_{t\geq 0}$ can be proved in a similar way as in the previous case. 
\end{proof}

 \medskip\noindent
{\bf Acknowledgments:} M. T. Mohan would  like to thank the Department of Science and Technology (DST), India for Innovation in Science Pursuit for Inspired Research (INSPIRE) Faculty Award (IFA17-MA110).


\begin{thebibliography}{99}
	
\bibitem{SNA}	S.N. Antontsev and H.B. de Oliveira, The Navier–Stokes problem modified by an absorption term, \emph{Applicable Analysis}, {\bf 89}(12),  2010, 1805--1825. 
	
	
	
	
	
	
	\bibitem{MAAT} 	M. Arnaudon, A. Thalmaier, F.-Y. Wang, Gradient estimates and Harnack inequalities on non-compact Riemannian manifolds, \emph{Stochastic Process. Appl.}, {\bf 119} (2009), 3653--3670.
	
	
	
	
	\bibitem{VB} V. Barbu, {\it Analysis and control of nonlinear infinite dimensional	systems}, Academic Press, Boston, 1993.
	
\bibitem{JBFY} 	J. Bao, F.-Y. Wang and C. Yuan, Asymptotic log-Harnack inequality and applications for stochastic
	systems of infinite memory, \emph{Stochastic Process. Appl.}, {\bf  129} (2019), 4576--4596.
	
	
	
	
	
	\bibitem{HBAM}		H. Bessaih and A. Millet,	On stochastic modified 3D Navier–Stokes equations with anisotropic viscosity, \emph{Journal of Mathematical Analysis and Applications}, 462 (2018), 915--956. 
	
	
	



\bibitem{ZBGD}  Z. Brze\'zniak and Gaurav Dhariwal, Stochastic tamed Navier-Stokes equations on $\mathbb{R}^3$: the existence and the uniqueness of solutions and the existence of an invariant measure,  https://arxiv.org/pdf/1904.13295.pdf. 

	
	


	\bibitem{OBAK} O. Butkovsky, A. Kulik, M. Scheutzow, Generalized couplings and ergodic rates for SPDEs and
other Markov models, \emph{Ann. Appl. Probab.}, {\bf 30} (2020), 1--39.
	
	
	
	
	
	
	
		\bibitem {CHKH} P.-L. Chow and R. Khasminskii,  Stationary solutions of nonlinear stochastic evolution equations,	\emph{Stochastic Analysis and Applications}, {\bf 15} (1997),	671--699.
	
		
		
	
	
	
	
	\bibitem {ICAM} I. Chueshov and A. Millet,	Stochastic 2D hydrodynamical type systems: Well posedness and Large Deviations, \emph{Applied Mathematics and  Optimization}, {\bf 61} (2010), 379--420.
	
	
	
	
	
	
	
	
\bibitem{DaZ}
	\newblock G. Da Prato and J. Zabczyk,
	\newblock \emph{Stochastic Equations in Infinite Dimensions},
	\newblock Cambridge University Press, 1992.
	
	\bibitem{GDJZ}
	\newblock G. Da Prato and J. Zabczyk,
	\newblock \emph{Ergodicity for Infinite Dimensional Systems},
	\newblock London Mathematical Society Lecture Notes, {\bf 229}, Cambridge	University Press, 1996.
	
		
		
	
	
	\bibitem{ADe}
	\newblock A. Debussche,
	\newblock Ergodicity results for the stochastic Navier-Stokes equations: An introduction,
	\newblock Topics in Mathematical Fluid Mechanics, Volume {\bf 2073} of the series Lecture Notes in Mathematics, Springer,
	23--108, 2013.
	

\bibitem{WEJC} W. E and J.C. Mattingly, Ergodicity for the Navier-Stokes equation with degenerate random forcing:
finite-dimensional approximation, \emph{Comm. Pure Appl. Math.}, {\bf 54} (2001), 1386--1402.

\bibitem{WEJC1}  W. E and  J.C. Mattingly, Y. Sinai, Gibbsian dynamics and ergodicity for the stochastically forced
Navier-Stokes equation, \emph{Comm. Math. Phys.} 224 (2001), 83–106.
	
	
	
	
	
	
	
	
	
	
	
	
	
	\bibitem{CLF} 	C. L. Fefferman, K. W. Hajduk and J. C. Robinson,	\emph{Simultaneous approximation in Lebesgue and Sobolev norms via eigenspaces}, https://arxiv.org/abs/1904.03337.

		\bibitem{GGP} G. P. Galdi,  An introduction to the Navier–Stokes initial-boundary value problem. pp. 11-70 in \emph{Fundamental directions in mathematical fluid mechanics}, Adv. Math. Fluid Mech. Birkha\"user, Basel 2000.
		
	
	\bibitem{NGJC} 	N. Glatt-Holtz, J.C. Mattingly, G. Richards, On unique ergodicity in nonlinear stochastic partial
	differential equations, \emph{J. Stat. Phys.} {\bf 166} (2017), 618--649.
	
		
	
	
	
	

	\bibitem{MHJC} M. Hairer, J.C. Mattingly, Ergodicity of the 2D Navier-Stokes equations with degenerate stochastic
forcing, \emph{Annals of Mathematics}, {\bf 164} (2006), 993--1032.


	\bibitem{KWH}	K. W. Hajduk and J. C. Robinson, Energy equality for the 3D critical convective Brinkman-Forchheimer equations, \emph{Journal of Differential Equations}, {\bf 263} (2017), 7141--7161.
	

\bibitem{WHSL}	 W. Hong, S. Li and W. Liu, Asymptotic log-Harnack inequality and applications for stochastic 2D hydrodynamical-type systems with degenerate noise, \emph{Journal of Evolution Equations}, 2020,  https://doi.org/10.1007/s00028-020-00587-w. 

\bibitem{WHSL1}	 W. Hong, S. Li and W. Liu, Asymptotic log-Harnack inequality and applications for SPDE with degenerate multiplicative noise, \emph{Statistics and Probability Letters}, {\bf 164} (2020), 108810. 
	
	
	

	

	
	
	
	
	
	
	
	
	\bibitem{KT2}  V. K. Kalantarov and S. Zelik, Smooth attractors for the Brinkman-Forchheimer equations with fast growing nonlinearities, \emph{Commun. Pure Appl. Anal.}, {\bf 11}	(2012) 2037--2054.
	
\bibitem{AKMS}	A. Kulik, M. Scheutzow, Generalized couplings and convergence of transition probabilities, \emph{Probab.	Theory Relat. Fields}, {\bf 171} (2018), 333--376.
	
	
	
	\bibitem{OAL}	O. A. Ladyzhenskaya, \emph{The Mathematical Theory of Viscous Incompressible Flow}, Gordon and Breach, New York, 1969.
	
	
	
		\bibitem{LHGH1}	H. Liu and H. Gao, Stochastic 3D Navier-Stokes equations with nonlinear damping: martingale solution, strong solution and small time LDP, Chapter 2 in \emph{Interdisciplinary Mathematical SciencesStochastic PDEs and Modelling of Multiscale Complex System}, 9--36, 2019.
		
		
		
			
			
		\bibitem{SLWL}		S. Li, W. Liu and Y. Xie, Ergodicity of 3D Leray-$\alpha$ model with fractional dissipation and degenerate	stochastic forcing, \emph{Infin. Dimens. Anal. Quantum Probab. Relat. Top}, {\bf 22} (2019), 1950002, 20pp.
			
	\bibitem{WLi} 		W. Liu, Harnack inequality and applications for stochastic evolution equations with monotone
			drifts, \emph{Journal of Evolution Equations} {\bf 9} (2009), 747--770.
			
		\bibitem{WLFY} 	W. Liu, F.-Y. Wang, Harnack inequality and strong Feller property for stochastic fast-diffusion
			equations, \emph{J. Math. Anal. Appl.}, {\bf 342} (2008), 651--662.
	
	
\bibitem{WLMR} W. Liu and M. R\"ockner,	Local and global well-posedness of SPDE with generalized coercivity conditions, \emph{Journal of Differential Equations}, {\bf 254} (2013), 725--755. 

\bibitem{WL}  W. Liu, Well-posedness of stochastic partial differential equations with Lyapunov condition, \emph{Journal of Differential Equations}, {\bf 255} (2013), 572--592. 


	
		
	
	
	
	
		
	
	
	
	
	
	
	
	
	
	
		\bibitem{MTM7} M. T. Mohan, On the convective Brinkman-Forchheimer equations, \emph{Submitted}. 
	
	\bibitem{MTM8} M. T. Mohan, Stochastic convective Brinkman-Forchheimer equations, \emph{Submitted}. 
	
	\bibitem{MTM9} M. T. Mohan, 	Log-Harnack inequality for stochastic Burgers-Huxley equations and its applications, \emph{Submitted}. 

\bibitem{JCR3}	J. C. Robinson and W. Sadowski,	A local smoothness criterion for solutions of the 3D Navier-Stokes equations, \emph{Rendiconti del Seminario Matematico della Universit\'a di Padova} {\bf 131} (2014), 159--178.
	
		
		

		
		
		
			\bibitem{MRXZ1}	M. R\"ockner and X. Zhang, Stochastic tamed 3D Navier-Stokes equation: existence, uniqueness and ergodicity, \emph{Probability Theory and Related Fields}, {\bf 145} (2009) 211--267.
			
		\bibitem{MRTZ1}	M. R\"ockner, T. Zhang and X. Zhang,	Large deviations for stochastic tamed 3D Navier-Stokes equations, \emph{Applied Mathematics and Optimization}, {\bf 61} (2010), 267--285. 
		
			\bibitem{MRTZ}	 M. R\"ockner and T. Zhang, Stochastic 3D tamed Navier-Stokes equations: Existence, uniqueness and small time large deviations principles, J\emph{ournal of Differential Equations}, {\bf 252} (2012), 716--744.
	
	
	
	
	
	\bibitem{Te} R. Temam,  \emph{Navier-Stokes Equations, Theory and Numerical Analysis}, North-Holland, Amsterdam, 1984.
	

	
	
	
	\bibitem{Te1} R. Temam, 	\emph{Navier-Stokes Equations and Nonlinear Functional Analysis}, Second Edition, CBMS-NSF Regional Conference Series in Applied Mathematics, 1995.
	
	

\bibitem{FYW}  F.-Y. Wang, Logarithmic Sobolev inequalities on noncompact Riemannian manifolds, \emph{Probability Theory and Related Fields}, {\bf 109} (1997), 417--424.

\bibitem{FYW3} F.-Y. Wang, Harnack inequality and applications for stochastic generalized porous media equations, \emph{Ann. Probab.}, {\bf 35} (2007), 1333--1350.



\bibitem{FYW1} F.-Y. Wang, Harnack inequalities on manifolds with boundary and applications, \emph{J. Math. Pures Appl.}, {\bf 94} (2010), 304--321.

\bibitem{FYW2}  F.-Y. Wang, \emph{Harnack inequalities and Applications for Stochastic Partial Differential Equations}, Springer, Berlin, 2013.

\bibitem{FYW4} F.-Y. Wang, T.S. Zhang, Log-Harnack inequality for mild solutions of SPDEs with multiplicative noise, \emph{Stochastic Process. Appl.}, {\bf  124} (2014), 1261--1274.

	\bibitem{FYW5}	F.-Y. Wang, J.-L. Wu and L. Xu, Log-Harnack inequality for stochastic Burgers equations and applications, \emph{Journal of Mathematical Analysis and Applications}, {\bf 384}(1) (2011),  151--159.

\bibitem{LXu} L. Xu, A modified log-Harnack inequality and asymptotically strong Feller property, \emph{Journal of Evolution Equations}, {\bf 11} (2011), 925--942.
	

	
	
	
	
	
	
	
	

	
\end{thebibliography}
\end{document}